\documentclass[12pt,a4paper]{amsart}

\usepackage{amssymb}
\usepackage{amscd}
\usepackage{hyperref}
\usepackage[numeric, abbrev, nobysame]{amsrefs}

\setlength{\textwidth}{418pt}
\setlength{\oddsidemargin}{17.5pt}
\setlength{\evensidemargin}{17.5pt}

\def\frak{\mathfrak}
\def\Bbb{\mathbb}
\def\Cal{\mathcal}

\let\phi\varphi

\newcommand{\x}{\times}
\renewcommand{\o}{\circ}

\newcommand{\al}{\alpha}
\newcommand{\be}{\beta}

\newcommand{\de}{\delta}

\newcommand{\ka}{\kappa}
\newcommand{\la}{\lambda}
\newcommand{\om}{\omega}
\newcommand{\ph}{\phi}
\newcommand{\ps}{\psi}
\renewcommand{\th}{\theta}
\newcommand{\si}{\sigma}

\newcommand{\Ga}{\Gamma}
\newcommand{\La}{\Lambda}
\newcommand{\Ph}{\Phi}
\newcommand{\Ps}{\Psi}
\newcommand{\Om}{\Omega}

\newcommand{\im}{\operatorname{im}}
\renewcommand{\Im}{\operatorname{im}}

\newcommand{\Coker}{\operatorname{coker}}
\newcommand{\id}{\operatorname{id}}
\newcommand{\ad}{\operatorname{ad}}
\newcommand{\Ad}{\operatorname{Ad}}
\newcommand{\Aut}{\operatorname{Aut}}

\newcommand{\hor}{\text{hor}}

\newcommand{\tcg}{{\tilde{\Cal G}}}
\newcommand{\tcw}{{\tilde{\Cal W}}}
\newcommand{\tbw}{{\tilde{\Bbb W}}}

\newcounter{theorem}
\numberwithin{theorem}{section}
\newtheorem{thm}[theorem]{Theorem}
\newtheorem*{thm*}{Theorem \thesubsection}
\newtheorem{lemma}[theorem]{Lemma}
\newtheorem{prop}[theorem]{Proposition}
\newtheorem{cor}[theorem]{Corollary}
\newtheorem*{lemma*}{Lemma \thesubsection}
\newtheorem*{prop*}{Proposition \thesubsection}
\newtheorem*{cor*}{Corollary \thesubsection}

\theoremstyle{definition}
\newtheorem{definition}[theorem]{Definition}
\newtheorem*{definition*}{Definition \thesubsection}

\newtheorem*{example*}{Example \thesubsection}
\theoremstyle{remark}
\newtheorem{remark}[theorem]{Remark}
\newtheorem*{remark*}{Remark \thesubsection}

\def\sideremark#1{\ifvmode\leavevmode\fi\vadjust{\vbox to0pt{\vss
 \hbox to 0pt{\hskip\hsize\hskip1em
 \vbox{\hsize3cm\tiny\raggedright\pretolerance10000
  \noindent #1\hfill}\hss}\vbox to8pt{\vfil}\vss}}}%
                        
                                                   %

\begin{document}

\title{Parabolic conformally symplectic structures III;\\
Invariant Differential operators and complexes} 
\date{December 30, 2017}
\author{Andreas \v Cap and Tom\'a\v s Sala\v c}
\thanks{Support by projects P23244--N13 (both authors) and P27072--N25 
  (first author) of the Austrian Science Fund (FWF) and by grant
  17--01171S of the Grant Agency of the Czech Republic (GA\v CR) is
  gratefully acknowledged.}

\address{A.\v C.: Faculty of Mathematics\\
University of Vienna\\
Oskar--Morgenstern--Platz 1\\
1090 Wien\\
Austria}
\address{T.S.: Mathematical Institute\\ Charles University\\ Sokolovsk\'a
  83\\Praha\\Czech Republic}
\email{Andreas.Cap@univie.ac.at}
\email{salac@karlin.mff.cuni.cz}

\begin{abstract}
  This is the last part of a series of articles on a family of
  geometric structures (PACS--structures) which all have an underlying
  almost conformally symplectic structure. While the first part of the
  series was devoted to the general study of these structures, the
  second part focused on the case that the underlying structure is
  conformally symplectic (PCS--structures). In that case, we obtained
  a close relation to parabolic contact structures via a concept of
  parabolic contactification. It was also shown that special
  symplectic connections (and thus all connections of exotic
  symplectic holonomy) arise as the canonical connection of such a
  structure.

  In this last part, we use parabolic contactifications and
  constructions related to Bernstein--Gelfand--Gelfand (BGG) sequences
  for parabolic contact structures, to construct sequences of
  differential operators naturally associated to a PCS--structure. In
  particular, this gives rise to a large family of complexes of
  differential operators associated to a special symplectic
  connection. In some cases, large families of complexes for more
  general instances of PCS--structures are obtained. 
\end{abstract}

\subjclass[2010]{Primary: 53D05, 53D10, 53C15, 58J10; 
Secondary: 53C10, 53C55, 58A10}

\maketitle

\pagestyle{myheadings} \markboth{\v Cap and Sala\v c}{PCS--structures
  III}
 
\section{Introduction}\label{1}

This article is the last part in a series of three which aims at
constructing a large family of differential complexes naturally
associated to certain geometric structures. These structures are
associated to certain parabolic subalgebras in simple Lie algebras and
they come with an underlying almost conformally symplectic structure,
so we call them \textit{parabolic almost conformally symplectic
  structures} or PACS--structures for short. The precise definition of
these structures was given in the first part \cite{PCS1} of the
series, where we also showed that any such structure gives rise to a
canonical connection on the tangent bundle of the underlying
manifold. Hence the torsion and the curvature of this canonical
connection are natural invariants of a PACS--structure. The torsion
naturally splits into two components, one of which is exactly the
obstruction to the underlying structure being conformally
symplectic. If this obstruction vanishes, the structure is called a
PCS--structure, and if the remaining component of torsion also vanishes,
one talks about a torsion--free PCS--structure.

Using the classification of simple Lie algebras, one can give an
explicit description of the PACS--structures. In \cite{PCS1} it was
shown that, on the one hand, these structures provide natural
extensions of several well known and interesting geometries. For
example, for any K\"ahler metric, the K\"ahler form and the complex
structure define a torsion--free PCS--structure corresponding to the
simple Lie algebra $\frak{su}(n+1,1)$. Indeed, torsion--free
PCS--structures of this type are equivalent to K\"ahler metrics.
Allowing torsion for the PCS--structure, one obtains certain more
general types of almost Hermitian manifolds, while for
PACS--structures of that type, there is no obvious description in
terms of Hermitian metrics. Things work similarly in indefinite
signatures and for para--Hermitian metrics. Another type of
PCS--structure is closely related to almost quaternionic manifolds
endowed with a conformally symplectic structure that is Hermitian in
the quaternionic sense. 

On the other hand, there is a close relation between PCS--structures
and special symplectic connections in the sense of
\cite{Cahen-Schwachhoefer}. Indeed any special symplectic connection
turns out to be the canonical connection of a torsion--free
PCS--structure, so in particular, this applies to all connections of
exotic symplectic holonomy. There is a nice characterization of the
PCS--structures whose canonical connection is special symplectic using
local parabolic contactifications (see below), which is crucial for
the developments in this article. 

\smallskip

The algebraic data which determine a type of PACS--structures at the
same time determine another geometric structure in one higher
dimension. Any of these structures comes with an underlying contact
structure and they are called \textit{parabolic contact structures},
see the discussion in Section 4.2 of \cite{book}. Now on a contact
manifold, the Reeb field of any contact form defines a
\textit{transversal infinitesimal automorphism} of the contact
structure. In particular it gives rise to a one--dimensional foliation
and any local space of leaves for this foliation naturally inherits a
conformally symplectic structure. As discussed in \cite{Cap-Salac},
any conformally symplectic structure can be locally realized in this
way (``local contactification'') and this realization is unique up to
local contactomorphism. 

For parabolic contact structures, transversal infinitesimal
automorphisms are much more rare (and don't exist generically). Still, 
as shown in the second part \cite{PCS2} of this series there is a
perfect analog of these constructions in the setting of parabolic
contact structures and PCS--structures. For any transversal
infinitesimal automorphism of a parabolic contact structure of any
type, a local leaf space naturally inherits a PCS--structure of the
corresponding type (``PCS--quotients''). Locally, any PCS--structure
can be realized in this way (``parabolic contactification'') and this
realization is unique up to local isomorphism (of parabolic contact
structures). So one can view PCS--structures as geometric structures
characterizing reductions of parabolic contact structures by a
transversal infinitesimal symmetry. 

In the language of parabolic contactifications, one can also deal with
geometries corresponding to Lie algebras of type $C_n$ (which are
excluded in \cite{PCS1}), see Section 3 of \cite{PCS2}. Here the
parabolic contact structure is a \textit{contact projective structure}
(see \cite{Fox}) with vanishing contact torsion, while the analog of a
PCS--structure is a conformally Fedosov structure as introduced in
\cite{E-S} (with slight modifications). Finally, in all cases, the
PCS--structures for which the distinguished connection is special
symplectic (which in the conformally Fedosov case means that it is of
Ricci--type) are exactly those, for which any local parabolic
contactification is locally isomorphic to the homogeneous model. This
gives a conceptual explanation for the construction for special
symplectic connections found in \cite{Cahen-Schwachhoefer}.

\smallskip

As stated above, this last part of the series aims at constructing
differential complexes, which are naturally associated to special
symplectic connections or more general PCS--structures. The original
motivation for the series were the differential complexes on $\Bbb
CP^n$ constructed in \cite{E-G} and applied to problems in integral
geometry there. In that construction, it was not clear what kind of 
geometric structure on $\Bbb CP^n$ is ``responsible'' for the
existence of the complexes. A surprising feature is that these
complexes are one step longer than the de--Rham complex, which
suggests that they have their origin in one higher dimension. The
simplest instance of such a complex is the so--called co--effective
complex on a conformally symplectic manifold, which looks like the
Rumin complex associated to a contact structure in one higher
dimension. Indeed, as a ``proof of concept'' for the current series,
it was shown in \cite{Cap-Salac} that the co--effective complex can be
constructed from the Rumin complex on local contactifications. 

Basically, we carry out a similar procedure in this article, starting
from a large family of differential complexes that are naturally
associated to parabolic contact structures. These are derived from
BGG sequences as introduced in \cite{CSS-BGG}. Standard BGG sequences
are complexes only on locally flat geometries, so pushing them down,
one obtains sequences of differential operators naturally associated
to a PCS--structure, which are complexes provided that the canonical
connection of the PCS--structure is special symplectic. For certain
geometries, it has been shown in \cite{subcomplexes} that certain
parts of BGG sequences are subcomplexes under weaker assumptions than
local flatness. For parabolic contact structures, this only applies in
the case of structures of type $A_n$, so this gives rise to a
construction of complexes (of unusual length) for certain
PCS--structures of K\"ahler and para--K\"ahler type. In the
para--K\"ahler case, one can also start from the relative version of
BGG sequences which were constructed in the recent article
\cite{Rel-BGG2}, and which are complexes under much weaker assumptions
than local flatness.

In the situation of the co--effective complex and the Rumin complex,
both the construction of the upstairs complex and the procedure of
pushing down can be phrased in terms of differential forms. In the
general situation of BGG sequences and their variants, one has to deal
with differential forms with values in a tractor bundle on the level
of parabolic contact structures, and constructing the BGG sequence is
much more involved. Therefore, we use a slightly different approach
than in \cite{Cap-Salac}. The main observation here is that for a
completely reducible natural bundle on a parabolic contact structure
(i.e.~a bundle induced by a completely reducible representations of
the parabolic subgroup) there is an obvious counterpart for
PCS--structure of the corresponding type. For a local
contactification, there is a rather simple relation between sections
upstairs and downstairs, which allows one to directly descend
invariant differential operators acting between sections of such
bundles. Hence we can directly descend the operators in the BGG
sequence to any PCS--quotient, without the need to think about
descending tractor bundles or tractor connections. It should be
remarked that also an approach via downstairs tractor bundles should
be feasible. For the case of conformally Fedosov structures, this has
been carried out in the second version of the preprint \cite{E-S} that
has appeared recently.

After a short review of the geometric structure involved and the
parabolic version of contactification, the push down procedure for
invariant operators acting between sections of completely reducible
bundles is described in Section 2; the main results are Theorems
\ref{thm2.4} and \ref{thm2.5}. Section 3 discusses the applications of this
technique to BGG sequences and the related constructions described
above. We describe in detail the complexes associated to connections
of Ricci type in Theorem \ref{thm3.2} and those associated to
Bochner--bi--Lagrangean metrics in Theorem \ref{thm3.3}. The complexes
for para--K\"ahler manifolds obtained from relative BGG sequences are
described in detail in Theorem \ref{thm3.6}. The cases of complexes for
Bochner--K\"ahler metrics (of any signature) coming from BGG sequences
and for K\"ahler metrics coming from subcomplexes in BGG sequences are
briefly outlined in Section \ref{3.4} and Remark \ref{rem3.6}.

In Section 4, we describe results on the cohomology of the descended
version of BGG sequences. This is similar to the results for the
co--effective complex in \cite{Cap-Salac}, but this time, the main
work is done on the level of the parabolic contact structure. The
basic ingredient here is that on that level, the cohomology of a BGG
sequence can be described as a twisted de--Rham cohomology. A detailed
analysis of the construction of BGG sequences shows that there is a
sequence of subsheaves in the upstairs sheaves of
tractor--bundle--valued differential forms which computes the
cohomology of the descended complex. In Theorem \ref{thm4.4} we construct
a long exact sequence involving the cohomology groups of that
sheaf. Specializing to the case of the homogeneous model, Theorem
\ref{thm4.5} then allows one to interpret this sequence in terms of
``downstairs'' data. The results are analyzed locally as well as for
the global contactification of $\Bbb CP^n$ by the sphere $S^{2n+1}$,
where we obtain a vast generalization of the results on cohomology
needed for the applications in \cite{E-G}.

\section{Pushing down invariant operators}\label{2}
We first review PCS--structures and their relation to parabolic
contact structures. Then we show that each invariant differential
operator acting between sections of irreducible natural bundles on the
parabolic contact structure descends to a natural differential
operator on the corresponding PCS--structure.

\subsection{The types of geometric structures}\label{2.1}
To specify a type of PCS--structure and corresponding parabolic
contact structure, we have to choose some algebraic data, see Section
2.1 of \cite{PCS2} for more details. We first need a semisimple
Lie group $G$, whose Lie algebra $\frak g$ admits a so--called contact
grading $\frak g=\frak g_{-2}\oplus\frak g_{-1}\oplus\frak
g_0\oplus\frak g_1\oplus\frak g_2$. Next, we have to choose a
parabolic subgroup $P\subset G$ corresponding to the Lie subalgebra
$\frak p:=\frak g_0\oplus\frak g_1\oplus\frak g_2$. Then we define a
closed subgroup $G_0\subset P$ with Lie algebra $\frak g_0$ as
consisting of those elements of $P$ whose adjoint action preserves the
grading of $\frak g$. It turns out that the exponential mapping
restricts to a diffeomorphism from $\frak p_+:=\frak g_1\oplus\frak
g_2$ onto a closed normal subgroup $P_+\subset P$ such that $P$ is the
semi--direct product of $G_0$ and $P_+$. In particular, $P/P_+\cong
G_0$. 

By definition, $\frak g_{-}:=\frak g_{-2}\oplus\frak g_{-1}$ is a
Heisenberg algebra, so its Lie bracket defines a non--degenerate line
in $\La^2(\frak g_{-1})^*$. This in turn defines the conformally
symplectic group $CSp(\frak g_{-1})\subset GL(\frak g_{-1})$. It turns
out that for any element $\ph\in CSp(\frak g_{-1})$, there is a unique
linear isomorphism $\ps:\frak g_{-2}\to\frak g_{-2}$ such that
$(\ps,\ph)$ defines an automorphism of the graded Lie algebra $\frak
g_{-}$, so one obtains an isomorphism $\Aut_{gr}(\frak g_-)\cong
CSp(\frak g_{-1})$. By definition, the adjoint action of $G_0$
restricts to an action by automorphisms on the graded Lie algebra
$\frak g_-$, so one obtains a homomorphism $G_0\to CSp(\frak g_{-1})$
which turns out to be infinitesimally injective. 

If $\frak g$ is not of type $C_n$, then both geometric structures we
need are determined by this homomorphism. In \cite{PCS1}, we have
defined the \textit{PACS--structure} associated to $(G,P)$ as the
first order structure on manifolds of dimension $\dim(\frak g_{-1})$
determined by the homomorphism $G_0\to GL(\frak g_{-1})$. Hence such a
structure on a smooth manifold $M$ is given by a principal
$G_0$--bundle together with a \textit{soldering form}, a strictly
horizontal, $G_0$--equivariant $\frak g_{-1}$--valued one--form on the
total space of this bundle. Since the image of our homomorphism is
contained in $CSp(\frak g_{-1})$, any such structure induces an
underlying almost conformally symplectic structure. A
\textit{PCS--structure of type $(G,P)$} is then such a first order
structure for which this underlying structure is conformally
symplectic.

On the other hand, for a contact manifold of dimension $\dim(\frak
g_-)$, one considers the associated graded to the tangent bundle,
which has a natural frame bundle with structure group $\Aut_{gr}(\frak
g_-)\cong CSp(\frak g_{-1})$. A \textit{parabolic contact structure of
  type $(G,P)$} is then given by a reduction of structure group of
this frame bundles corresponding to the homomorphism $G_0\to CSp(\frak
g_{-1})$, respectively by the canonical Cartan geometry that such a
reduction determines, see Section 4.2 of \cite{book}.

If $\frak g$ is of type $C_n$, then it turns out that the homomorphism
$G_0\to CSp(\frak g_{-1})$ induces an isomorphism between the Lie
algebras of the two groups. Thus reductions of structure group as
considered above carry very little information. Nonetheless, there are
analogs for both types of geometries in the $C_n$--case. On the
parabolic contact side, these are \textit{contact projective
  structures} (with vanishing contact torsion) as discussed in
\cite{Fox} and in Section 4.2.6 of \cite{book}. On the conformally
symplectic side, these are the \textit{conformally Fedosov structures}
discussed in Section 3 of \cite{PCS2} based on the earlier treatment
in the first version of \cite{E-S}.

Treating the structures in terms of principal bundles and soldering
forms (of appropriate type) the $C_n$--case looks essentially the same
as the other cases. Hence we will give a uniform treatment below and
also refer to conformally Fedosov structures as PCS--structures of
type $C_n$.

\subsection{Invariant operators on parabolic contact
  structures}\label{2.2} 

The uniform description of parabolic contact structures is via Cartan
geometries of type $(G,P)$. A parabolic contact structure of type
$(G,P)$ on a manifold $M^\#$ determines a principal $P$--bundle
$p^\#:\Cal G^\#\to M^\#$ and a normal Cartan connection
$\om\in\Om^1(\Cal G^\#,\frak g)$. Factoring by the free action of
$P_+\subset P$, we obtain a principal $G_0$--bundle $\Cal G_0^\#\to
M^\#$. The Cartan connection $\om$ descends to a soldering form
$\th^\#$ that defines a reduction of the frame bundles of
the associated graded to the tangent bundle, see Section 2.3 of
\cite{PCS2} for more details. If $\frak g$ is not of type $C_n$, then
this is the equivalent encoding of the geometry as discussed in
Section \ref{2.1}.

Any representation of the group $P$ gives rise to a natural vector
bundle on parabolic contact structures of type $(G,P)$ via forming
associated bundles to the Cartan bundle. We will only meet natural
bundles obtained in this way in this article. The general
representation theory of $P$ is rather complicated, but irreducible
and hence completely reducible representations of $P$ are easy to
understand. If $\Bbb W$ is such a representation, then the nilpotent
normal subgroup $P_+\subset P$ acts trivially on $\Bbb W$, so we
obtain a representation of $G_0$. This
immediately implies that the associated bundle $\Cal G^\#\x_P\Bbb W$
can be naturally identified with $\Cal G_0^\#\x_{G_0}\Bbb W$. Hence
natural bundles associated to completely reducible representations can
be readily understood in terms of the underlying structure. Finally,
the group $G_0$ is always reductive, so its representation theory is
well understood.

The equivalence between parabolic contact structures and Cartan
geometries in particular implies that any automorphism of a parabolic
contact structure on $M^\#$ lifts to a bundle--automorphism of $\Cal
G^\#$ which preserves $\om$. This implies an analogous result for an
infinitesimal automorphism $\xi\in\frak X(M^\#)$, i.e.~a vector field
whose local flows are automorphisms. Such a vector field always
uniquely lifts to a $P$--invariant vector field $\tilde\xi\in\frak
X(\Cal G^\#)$ such that for the Lie derivative $\Cal L$, we get $\Cal
L_{\tilde\xi}\om=0$. Invariance of $\tilde\xi$ implies that there is
an intermediate vector field $\xi_0\in\frak X(\Cal G^\#_0)$ whose flow
preserves the soldering form. We will be particularly interested in
the case of \textit{transverse} infinitesimal automorphisms, i.e.~the
case that all values of $\xi\in\frak X(M^\#)$ are transverse to the
contact subbundle (so in particular, $\xi$ is nowhere vanishing). 

Given a representation $\Bbb W$ of $P$, the space of sections of the
natural bundle $\Cal G^\#\x_P\Bbb W$ can be naturally identified with
the space of $P$--equivariant smooth functions $\Cal G^\#\to\Bbb
W$. Given vector field $\tilde\xi\in\frak X(\Cal G^\#)$, we can
differentiate such equivariant functions and if $\tilde\xi$ is
$P$--invariant, then the resulting function will be equivariant and
hence correspond to a section. Hence an equivariant vector field acts
on sections of any natural vector bundle, and we will denote this
action by $\Cal L_{\tilde\xi}$.

If $\Bbb W$ is completely reducible then $P$--equivariancy of a
function $\Cal G^\#\to\Bbb W$ implies invariance under the group
$P_+$. Hence such a function descends to $\Cal G^\#/P_+=\Cal G_0^\#$
and is $G_0$--equivariant there. A $P$--invariant vector field
$\tilde\xi$ as above induces a $G_0$--invariant vector field $\xi_0$
on $\Cal G_0^\#$ and we can use $\xi_0$ to differentiate sections as
above, thus obtaining the same action as above. 

There is a general concept of invariant differential operators acting
between sections of natural vector bundles over manifolds endowed with
a parabolic contact structure of some fixed type. For our puposes it
suffices to know that such an operator is defined on any manifold
endowed with a structure of the given type and that these operators
are compatible with the inclusion of open subsets (endowed with the
restricted structure) and with the action of isomorphisms. Hence they
are compatible with the action of local isomorphism and in particular
of local automorphisms. Applying this to local flows, we conclude that
for an infinitesimal automorphism $\tilde\xi$ of the Cartan geometry
determined by a parabolic contact structure, any invariant
differential operator $D$ commutes with the action of $\Cal
L_{\tilde\xi}$. In the case of completely reducible bundles, one may
as well work on the $G_0$--principal bundle $\Cal G^\#_0$ using $\Cal
L_{\xi_0}$.

\subsection{PCS--quotients}\label{2.3}

The fundamental notion for the study of contactifications in the realm
of PCS--structures and parabolic contact structures in \cite{PCS2} is
a PCS--quotient. Suppose that we have given a parabolic contact
structure of type $(G,P)$ on $M^\#$ together with a  transverse infinitesimal
automorphism $\xi\in\frak X(M^\#)$ of this geometry. Then $\xi$ is
nowhere vanishing and hence defines a one--dimensional foliation of
$M^\#$.  Basically, a PCS--quotient is a global space of leafs for
this foliation which is endowed with a PCS--structure of type $(G,P)$
that can be viewed as a quotient of the parabolic contact structure. 

To formulate the precise definition, let $p_0^\#:\Cal G_0^\#\to M^\#$
be the $G_0$--bundle determined by the parabolic contact structure and
let $\th^\#$ be its soldering form. Now for a PCS--quotient, one
requires 
\begin{itemize}
\item a surjective submersion $q:M^\#\to M$ onto a smooth manifold $M$
  such that the fibers of $q$ are connected and their tangent spaces
  are spanned by $\xi$
\item a PCS--structure of type $(G,P)$ on $M$ with $G_0$--bundle
  $p:\Cal G_0\to M$ and soldering form $\th$
\item a lift $q_0:\Cal G_0^\#\to\Cal G_0$ of $q$ to a morphism of
  principal bundles which is a surjective submersion with connected
  fibers, whose tangent spaces are spanned by $\xi_0$ and such that
  $q_0^*\th$ coincides with the component $\th^\#_{-1}$ of the
  ``upstairs'' soldering form (see Sections 2.3 and 2.4 of \cite{PCS2}
  for details)
\end{itemize}

\begin{remark}\label{rem2.3}
If $\frak g$ is not of type $C_n$, then the above is exactly the
definition of a PCS--quotient from Section 2.4 of \cite{PCS2}. If
$\frak g$ is of type $C_n$, then the discussion in Section 3.3 of
\cite{PCS2} shows that the same setup is available for a projective
contact structure (with vanishing contact torsion) on $M^\#$ and a
conformally Fedosov structure on $M$, see in particular the proof of
the first part of Theorem 8 of \cite{PCS2}.
\end{remark}

As it stands, the concept of a PCS--quotient may look rather
restrictive and one might doubt whether there are many
examples. However, the results of \cite{Cap-Salac} and of \cite{PCS2}
imply that there are lots of examples. This is best formulated in the
language of \textit{parabolic contactifications} of
PCS--structures. By a parabolic contactifications of a PCS--structure
$M$, we simply mean a realization of $M$ as a PCS--quotient of a
parabolic contact structure of type $(G,P)$ (respectively that
parabolic contact structure). For later use, let us collect the
fundamental results on parabolic contactifications:

\begin{thm}\label{thm2.3}
(1) Let $M$ be a PCS--structure with underlying conformally symplectic
  structure $\ell\subset\La^2T^*M$. Then any open subset $U\subset M$
  over which $\ell$ admits a nowhere--vanishing section which is exact
  as a two--form on $M$ admits a parabolic contactification.

  (2) Let $M$ and $N$ be two PCS--structures endowed with fixed
  parabolic contactifications. Then locally any morphism of
  PCS--structures (compare with Section \ref{2.5} below) lifts to a
  contactomorphism between the contactifications. 

(3) Any lift of a morphism of PCS--structures to a contactomorphism of
  parabolic contactifications is automatically compatible with the
  infinitesimal automorphisms up to a nowhere--vanishing, locally
  constant factor and a morphism of parabolic contact structures.
\end{thm}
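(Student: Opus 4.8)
The plan is to deduce all three statements from the detailed results on parabolic contactifications established in \cite{PCS2}, modelled on the purely conformally symplectic case treated in \cite{Cap-Salac}. I would first settle the non--$C_n$ types directly and then invoke Remark \ref{rem2.3} to transfer everything to the $C_n$ case, where the parabolic contact structure is a contact projective structure (with vanishing contact torsion) and the PCS--structure is a conformally Fedosov structure. Thus the first task is to recall the explicit contactification construction, and the second is to extract functoriality and the behaviour of the transverse symmetry from the uniqueness statement.

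For part (1), I would start from the chosen nowhere--vanishing section $\om\in\Ga(\ell)$ over $U$ which is exact, say $\om=d\be$ for a one--form $\be$ on $U$. Form $M^\#:=U\x\Bbb R$ with projection $q:M^\#\to U$ and let $\xi$ be the fundamental field $\partial_t$ of the $\Bbb R$--action. Over $M^\#$ one builds the $G_0$--bundle $\Cal G_0^\#:=\Cal G_0\x\Bbb R$ with projection $q_0:\Cal G_0^\#\to\Cal G_0$ covering $q$, and equips it with a soldering form $\th^\#$ whose $\frak g_{-1}$--component is $q_0^*\th$ and whose $\frak g_{-2}$--component encodes the contact form determined by $\be$ along the fibre direction. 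Concretely, the associated contact form is $dt+q^*\be$, whose Reeb field is $\xi$ and which is genuinely contact because $d(dt+q^*\be)=q^*\om$ is non--degenerate on the contact subbundle. The substantial input from \cite{PCS2} is that this first--order datum extends uniquely to a normal parabolic contact structure of type $(G,P)$, i.e.\ that the normal Cartan connection $\om^\#$ can be reconstructed, and that $\xi$ becomes a transverse infinitesimal automorphism with $\xi_0$ covering it and $q_0^*\th=\th^\#_{-1}$. This exhibits $U$ as the PCS--quotient of $M^\#$ along $\xi$, which is precisely a parabolic contactification.

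Parts (2) and (3) I would derive from the local uniqueness of the contactification proved in \cite{PCS2}. Given PCS--structures $M$, $N$ with fixed contactifications and a morphism $f:M\to N$, functoriality of the normalisation of Cartan geometries guarantees that $f$ lifts to a morphism of the associated Cartan bundles intertwining the Cartan connections; since the contactification of the target is locally unique up to a contactomorphism, this morphism may be rephrased as a contactomorphism between the given contactifications covering $f$, giving (2). For (3), the kernel of $Tq$ is spanned by the transverse symmetry on each side, and any contactomorphism covering $f$ maps fibres of $q$ to fibres of $q$, hence carries the source line field $\langle\xi\rangle$ to the target one. Consequently the push--forward of the source symmetry equals the target symmetry up to a nowhere--vanishing function $\la$. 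Since both are \emph{transverse} infinitesimal automorphisms of parabolic contact structures, the rigidity of such automorphisms forces $\la$ to be locally constant, and the residual freedom is absorbed into a morphism of the underlying parabolic contact structures, which is the assertion.

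The main obstacle is the reconstruction step in part (1): passing from the first--order soldering data on $\Cal G_0^\#$ to the full normal Cartan connection $\om^\#$ is the nontrivial content, and this is exactly where one must appeal to the existence and uniqueness results for normal parabolic contact structures in \cite{PCS2} (and, in the $C_n$ case, to the contact projective normalisation of \cite{Fox} as used in Remark \ref{rem2.3}). Pinning down the locally constant factor in (3) is the secondary delicate point, since it requires tracking how the choice of exact primitive $\be$ rescales the contact form and hence the Reeb field.
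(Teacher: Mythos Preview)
Your overall strategy---reduce everything to the results of \cite{Cap-Salac} and \cite{PCS2}, treating the $C_n$ case via Remark \ref{rem2.3}---matches the paper, and your treatment of part (1) is essentially a spelled--out version of what the paper cites as Lemma 3.1 of \cite{Cap-Salac} together with Theorem 4 (resp.\ Theorem 8) of \cite{PCS2}.

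There is, however, a genuine gap in your argument for part (2). You invoke ``functoriality of the normalisation of Cartan geometries'' to lift $f:M\to N$ to a morphism of the Cartan bundles. But the Cartan geometries in question live over the contactifications $M^\#$ and $N^\#$, not over $M$ and $N$; there is no Cartan geometry on the PCS--structures themselves to which normalisation functoriality could be applied. At this stage you have no map at all between $M^\#$ and $N^\#$, so there is nothing for the Cartan--geometric machinery to act on. What you would need first is precisely a lift of $f$ to a contactomorphism $M^\#\to N^\#$, and only then can functoriality of normalisation upgrade it to a morphism of parabolic contact structures.

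The paper handles this by reversing the order of (2) and (3). Part (3) is established first (by direct citation of Theorem 5 resp.\ part 3 of Theorem 8 in \cite{PCS2}). For (2), one then uses Proposition 3.1 of \cite{Cap-Salac}, which is a purely conformally symplectic statement: any morphism of conformally symplectic structures locally lifts to a contactomorphism of the contactifications. Since a morphism of PCS--structures is in particular such a morphism, this produces the required contactomorphism; part (3) then tells you it is automatically a morphism of parabolic contact structures compatible with the symmetries. Your sketch of (3) is reasonable in outline, though the step ``rigidity of such automorphisms forces $\la$ to be locally constant'' is exactly the content absorbed into the citations and deserves a more precise pointer.
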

\begin{proof}
  (1) By Lemma 3.1 of \cite{Cap-Salac}, $U$ can be realized as the
  quotient $q:U^\#\to U$ of a contact manifold $U^\#$ by a transverse
  infinitesimal contactomorphism. By Theorem 4 (for $\frak g$ not of
  type $C_n$)respectively part 2 of Theorem 8 (for $\frak g$ of type
  $C_n$) of \cite{PCS2}, a PCS--structure of type $(G,P)$ on $U$ lifts
  to a parabolic contact structure of type $(G,P)$ on $U^\#$, thus
  providing the required PCS--quotient.

\smallskip

The statement of (3) is proved in Theorem 5 (for $\frak g$ not of type
$C_n$) respectively in part 3 of Theorem 8 (for $\frak g$ of type
$C_n$) of \cite{PCS2}. Moreover, by Proposition 3.1 of
\cite{Cap-Salac}, the assumption of part (3) is locally satisfied for
each morphism between PCS--structures endowed with contactifications,
so (2) follows.
\end{proof}

We want to remark that in \cite{PCS2} we also constructed examples of
global contactifications which will play an important role later on. 

\subsection{Descending invariant differential operators to
  PCS--quotients}\label{2.4} 
Now suppose that $\Bbb W$ is a representation of $G_0$, which we can
also view as a completely reducible representation of $P$. Then as
discussed in Section \ref{2.2}, this gives rise to a (completely
reducible) natural vector bundle on parabolic contact structures of
type $(G,P)$. Given such a geometry $(p^\#:\Cal G^\#\to M^\#,\om)$ we
denote the resulting bundle by $\Cal WM^\#:=\Cal G^\#\x_P\Bbb W$. As
noted in Section \ref{2.2}, we can also view $\Cal WM^\#$ as the
bundle $\Cal G_0^\#\x_{G_0}\Bbb W$ associated to the underlying
$G_0$--bundle.

On the other hand, one also obtains a natural vector bundle on
PCS--structures of type $(G,P)$, since they are also defined by a
principal $G_0$--bundle. Given such a geometry $(p:\Cal G_0\to M,\th)$
we write $\Cal WM:=\Cal G_0\x_{G_0}\Bbb W$ for this bundle.

Now it is well known that sections of an associated bundle can be
viewed as equivariant functions on the total space of the inducing
principal bundle. Explicitly, the space $\Ga(\Cal W\to M)$ of sections
is naturally isomorphic to 
$$
C^\infty(\Cal G_0,\Bbb W)=\{f\in C^\infty(\Cal G_0,\Bbb W):f(u\cdot
g)=g^{-1}\cdot f(u)\quad\forall g\in G_0\}. 
$$
Evidently, such a function can be pulled back via $q_0:\Cal
G_0^\#\to\Cal G_0$ to a smooth equivariant function $\Cal
G_0^\#\to\Bbb W$, which then defines a smooth section of $\Cal
WM^\#\to M^\#$. This defines an injection $\Ga(\Cal
WM)\hookrightarrow\Ga(\Cal WM^\#)$, which we denote by $q_0^*$.

\begin{lemma}\label{lem2.4}
  Let $q:M^\#\to M$ be a PCS--quotient by a transversal infinitesimal
  automorphism $\xi\in\frak X(M^\#)$ of a parabolic contact structure
  of type $(G,P)$ with bundle map $q_0:\Cal G_0^\#\to\Cal G_0$. Let
  $\Bbb W$ be a representations of $G_0$ and consider the
  corresponding induced bundles $\Cal WM^\#$ and $\Cal WM$ as
  above. Let $\xi_0\in\frak X(\Cal G_0^\#)$ be the $G_0$--invariant
  vector field induced by $\xi$ and consider the induced map $\Cal
  L_{\xi_0}$ on the space $\Ga(\Cal WM^\#)$.

Then the the image of $q_0^*:\Ga(\Cal WM)\to\Ga(\Cal WM^\#)$ coincides
with the kernel of $\Cal L_{\xi_0}$. 
\end{lemma}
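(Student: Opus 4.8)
The plan is to translate both sides of the asserted equality into statements about equivariant functions and then to feed in the defining properties of a PCS--quotient, namely that $q_0:\Cal G_0^\#\to\Cal G_0$ is a $G_0$--equivariant surjective submersion whose fibers are connected with tangent spaces spanned by $\xi_0$. Throughout I identify a section $s\in\Ga(\Cal WM^\#)$ with the corresponding equivariant function $\tilde s:\Cal G_0^\#\to\Bbb W$, and I use that under this identification $\Cal L_{\xi_0}s$ corresponds to the directional derivative $\xi_0\cdot\tilde s$, which is again equivariant precisely because $\xi_0$ is $G_0$--invariant (as recalled in Section \ref{2.2}). Since $s\mapsto\tilde s$ is a linear bijection intertwining the two operations, it suffices to show that $\tilde s$ factors through $q_0$ if and only if $\xi_0\cdot\tilde s=0$.

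First I would dispatch the inclusion $\im(q_0^*)\subseteq\ker(\Cal L_{\xi_0})$. If $s=q_0^*f$ for an equivariant $f:\Cal G_0\to\Bbb W$, then $\tilde s=f\o q_0$, and for $u\in\Cal G_0^\#$ the chain rule gives $(\xi_0\cdot\tilde s)(u)=df_{q_0(u)}\big(dq_0(\xi_0(u))\big)$. Since $\xi_0(u)$ spans the tangent space to the fiber of $q_0$ through $u$, it lies in the kernel of $dq_0$, so this derivative vanishes and $\Cal L_{\xi_0}s=0$. This direction is immediate: it only uses that $\xi_0$ is vertical for $q_0$.

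The substantive direction is $\ker(\Cal L_{\xi_0})\subseteq\im(q_0^*)$. Given $s$ with $\Cal L_{\xi_0}s=0$, the function $\tilde s$ has vanishing derivative along $\xi_0$. Restricted to any fiber $F$ of $q_0$, the differential of $\tilde s|_F$ vanishes because $T_uF$ is spanned by $\xi_0(u)$ for every $u\in F$; as $F$ is connected, $\tilde s$ is constant on $F$. Hence $\tilde s$ factors set--theoretically as $\tilde s=f\o q_0$ for a unique function $f:\Cal G_0\to\Bbb W$. I would then verify that $f$ is smooth — a fiberwise--constant smooth function descends smoothly along the surjective submersion $q_0$, which one checks locally using local sections of $q_0$ — and that it is $G_0$--equivariant: writing an arbitrary point of $\Cal G_0$ as $q_0(u)$ and using $q_0(u\cdot g)=q_0(u)\cdot g$ together with equivariance of $\tilde s$ yields $f(q_0(u)\cdot g)=\tilde s(u\cdot g)=g^{-1}\cdot\tilde s(u)=g^{-1}\cdot f(q_0(u))$. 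Thus $f\in\Ga(\Cal WM)$ with $q_0^*f=s$.

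The main obstacle is this descent step: upgrading ``$\tilde s$ is constant on each connected fiber'' to ``$\tilde s=f\o q_0$ with $f$ smooth and equivariant.'' This is exactly where the three hypotheses on a PCS--quotient are used — connectedness of the fibers promotes local constancy along $\xi_0$ to genuine constancy on fibers, the submersion property guarantees smoothness of the descended $f$, and $G_0$--equivariance of $q_0$ as a morphism of principal bundles transports equivariance of $\tilde s$ to equivariance of $f$. Once these features are invoked, the remaining verifications are routine.
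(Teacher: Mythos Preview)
Your proof is correct and follows essentially the same approach as the paper's: both translate the statement into one about equivariant functions, identify $\Cal L_{\xi_0}$ with directional differentiation along $\xi_0$, and then use that the fibers of $q_0$ are connected with tangent spaces spanned by $\xi_0$. Your version is simply more detailed, making explicit the smoothness and equivariance of the descended function that the paper leaves to the reader.
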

\begin{proof}
  In the language of equivariant functions, $\Cal L_{\xi_0}$ is simply
  given by differentiating vector valued functions using the vector
  field $\xi_0$. (This preserves the space of equivariant functions
  since $\xi_0$ is $G_0$--invariant.) In view of this, the result
  follows from the description of $q_0^*$ in terms of equivariant
  functions, since the fibers of $q_0$ are connected by assumption and
  their tangent spaces are spanned by $\xi_0$.
\end{proof}

Having this at hand, we can formulate the fundamental results about
descending invariant differential operators.

\begin{thm}\label{thm2.4}
  Let $q:M^\#\to M$ be a PCS--quotient by a transversal infinitesimal
  automorphism $\xi\in\frak X(M^\#)$ of a parabolic contact structure
  of type $(G,P)$ with bundle map $q_0:\Cal G_0^\#\to\Cal G_0$, and
  let $\Bbb W$ and $\tbw$ be representations of $G_0$. Further, let
  $D^\#:\Ga(\Cal WM^\#)\to\Ga(\tcw M^\#)$ be a linear invariant
  differential operator for the given parabolic contact structure.

  Then there is a unique linear differential operator
  $D:\Ga(\Cal WM)\to\Ga(\tcw M)$ such that $q_0^*\o D=D^\#\o q_0^*$.
\end{thm}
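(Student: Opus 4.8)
The plan is to use Lemma \ref{lem2.4}, which characterizes the image of $q_0^*$ as exactly the kernel of $\Cal L_{\xi_0}$, together with the fact recorded in Section \ref{2.2} that invariant differential operators commute with the action of infinitesimal automorphisms. These two ingredients should make both existence and uniqueness fall out cleanly.

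For existence, I would proceed as follows. Given a section $s\in\Ga(\Cal WM)$, its pullback $q_0^*s\in\Ga(\Cal WM^\#)$ lies in the kernel of $\Cal L_{\xi_0}$ by Lemma \ref{lem2.4}. Now $\tilde\xi$ (equivalently $\xi_0$) is an infinitesimal automorphism of the parabolic contact structure, so by the discussion at the end of Section \ref{2.2} the invariant operator $D^\#$ commutes with the induced action $\Cal L_{\xi_0}$ on sections. Hence
\begin{equation*}
\Cal L_{\xi_0}(D^\#(q_0^*s))=D^\#(\Cal L_{\xi_0}(q_0^*s))=D^\#(0)=0,
\end{equation*}
so $D^\#(q_0^*s)$ again lies in the kernel of $\Cal L_{\xi_0}$ on $\Ga(\tcw M^\#)$. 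Applying Lemma \ref{lem2.4} to the bundle $\tbw$ (so that $q_0^*:\Ga(\tcw M)\to\Ga(\tcw M^\#)$ is injective with image this kernel), there is a unique section $D(s)\in\Ga(\tcw M)$ with $q_0^*(D(s))=D^\#(q_0^*s)$. This defines $D$, and the intertwining relation $q_0^*\o D=D^\#\o q_0^*$ holds by construction. Injectivity of $q_0^*$ on the target forces uniqueness of the defining equation, which establishes both linearity of $D$ (since $q_0^*$, $D^\#$ are linear and $q_0^*$ is injective) and the uniqueness claim in the theorem.

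What remains is to check that the operator $D$ so defined is actually a differential operator, and not merely a linear map of section spaces. I expect this to be the main point requiring care. The cleanest route is locality: because $q_0^*$ is defined by pullback along the submersion $q_0$, it is compatible with restriction to $q$--saturated open subsets, and $D^\#$ is a local operator, so $D(s)$ over an open set $U\subset M$ depends only on $s|_U$; this gives that $D$ is local, and by the Peetre theorem locality together with linearity yields that $D$ is a differential operator. Alternatively, one can argue more explicitly that differentiating equivariant functions along directions transverse to the fibers of $q_0$ descends, since the relation $q_0^*\th=\th^\#_{-1}$ identifies the relevant jets upstairs and downstairs; this would also pin down the order of $D$ in terms of that of $D^\#$. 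Either way, the geometric input is that the fibers of $q_0$ are spanned by $\xi_0$ and the operator $D^\#$ respects this direction, so its action descends through the quotient.
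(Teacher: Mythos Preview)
Your proposal is correct and follows essentially the same approach as the paper: you use Lemma \ref{lem2.4} together with the commutation $D^\#\o\Cal L_{\xi_0}=\Cal L_{\xi_0}\o D^\#$ from Section \ref{2.2} to define $D$, and then invoke locality plus the Peetre theorem to conclude that $D$ is a differential operator. The paper's proof is the same in structure, with the only difference being that it also mentions the local--coordinates alternative for the differential--operator step, which you replace with a (somewhat sketchier) remark about jets via the soldering form.
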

\begin{proof}
  Since $D^\#$ is an invariant differential operator, it commutes with
  pullback along the flow of $\xi_0$. Infinitesimally, this means that
  $D^\#\o\Cal L_{\xi_0}=\Cal L_{\xi_0}\o D^\#$, so in particular
  $D^\#$ maps $\ker(\Cal L_{\xi_0})\subset\Ga(\Cal WM^\#)$ to $
  \ker(\Cal L_{\xi_0})\subset\Ga(\tcw M^\#)$. Using Lemma \ref{lem2.4},
  we conclude that given $\si\in\Ga(\Cal WM)$, there is a unique
  section $\tilde\si$ such that $D^\#(q_0^*\si)=q_0^*\tilde\si$, so we
  can define $D(\si):=\tilde\si$ to obtain an operator with the
  desired property. Clearly, $D$ is linear, and in local coordinates
  it is evident that $D$ is a differential operator. Alternatively,
  one may observe that if $\si$ vanishes on an open subset $U\subset
  M$, then $q_0^*\si$ vanishes on $(q_0)^{-1}(U)$. Since $D^\#$ is a
  differential operator, the same holds for $D^\#(q_0^*\si)$ so
  $D(\si)$ vanishes on $U$. Thus $D$ is a local operator and thus a
  differential operator by the Peetre theorem.
\end{proof}

\subsection{Naturality of the descended operators}\label{2.5} 
We next show that pushing down to PCS--quotients can be used to
construct natural operators on the category of PCS--structures from
invariant differential operators for the corresponding parabolic
contact structure. To explain the meaning of ``natural operator'', we
have to recall some concepts.

First a morphism of PCS--structures is defined to be a principal
bundle morphism $\ph$ which covers a local diffeomorphism
$\underline{\ph}$ of the base manifolds and is compatible with the
soldering forms. Next, we need the concept of a natural vector bundle
on the category of PCS--structures, but here we restrict to bundles
associated to the defining principal bundle. So as in Section
\ref{2.4}, we take a representation $\Bbb W$ of $G_0$ and for a
PCS--structure $(\Cal G_0\to M,\th)$ we define $\Cal WM:=\Cal
G_0\x_{G_0}\Bbb W$. The soldering form $\th$ can then be used to
identify natural bundles of this type with more traditional natural
bundles like tensor bundles.

This implies that any morphism $\ph$ of PCS--structures, say on $M$
and $N$, induces a vector bundle map $\Cal W\ph:\Cal WM\to\Cal WN$,
which restricts to a linear isomorphism on each fiber. Compatibility
with the soldering forms implies that this is compatible with the
identifications with tensor bundles, i.e.~one obtains the usual induced
bundle maps there. The induced vector bundle maps can then be used to
pull back sections of associated bundles: For $\si\in\Ga(\Cal WN)$,
there is a unique section $\ph^*\si\in\Ga(\Cal WM)$ such that $\si\o
\underline{\ph}=\Cal W\ph\o \ph^*\si$. 

Now given a second representation $\tbw$, a natural operator between
sections of the corresponding associated bundles is defined as a
family of differential operators $D_M:\Ga(\Cal WM)\to\Ga(\tcw M)$
which is compatible with the actions of all pullback operators
associated to morphisms of PCS--structures. Hence for any morphism
$\ph$ to a PCS--structure over $N$, and any section $\si\in\Ga(\Cal
WN)$ we require $D_M(\ph^*\si)=\ph^*(D_N(\si))$, where (as usual) we
denote all pullback operators by the same symbol. For the moment, we
stick to this general concept, some remarks on more restrictive
concepts of invariant operators are made below. 

\begin{thm}\label{thm2.5}
Let $\Bbb W$ and $\tbw$ be two representations of $G_0$, which we also
view as completely reducible representation of $P$. Then any invariant
operator on the category of parabolic contact structures of type
$(G,P)$ between sections of the natural bundles induced by the two
representations canonically induces a natural differential operator on
the category of PCS--structures of type $(G,P)$ acting between
sections of the induced bundles corresponding to the two
representations. 
\end{thm}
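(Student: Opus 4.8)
The plan is to build the operator $D_M$ from $D^\#$ one open set at a time using Theorem \ref{thm2.4}, then to prove that the local operator so obtained is independent of the chosen parabolic contactification so that the pieces glue, and finally to verify naturality by lifting morphisms of PCS--structures to contactomorphisms. First I would fix the invariant operator $D^\#$ on the category of parabolic contact structures of type $(G,P)$ between the bundles induced by $\Bbb W$ and $\tbw$. Given a PCS--structure $M$ with underlying conformally symplectic structure $\ell$, I would cover $M$ by open subsets $U$ over which $\ell$ admits a nowhere--vanishing section that is exact as a two--form on $M$. By part (1) of Theorem \ref{thm2.3} each such $U$ admits a parabolic contactification $q:U^\#\to U$ with bundle map $q_0:\Cal G_0^{U\#}\to\Cal G_0^U$, and Theorem \ref{thm2.4} then produces a unique operator $D_U:\Ga(\Cal WU)\to\Ga(\tcw U)$ characterised by $q_0^*\o D_U=D^\#\o q_0^*$.

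The main obstacle is to show that $D_U$ does not depend on the chosen contactification; once this is granted, gluing is routine. To handle it I would take two parabolic contactifications $q:U^\#\to U$ and $q':U'^\#\to U$ of the same $U$, with descended operators $D_U,D_U'$. By part (2) of Theorem \ref{thm2.3} the identity morphism of $U$ lifts locally to a contactomorphism $\Psi:U^\#\to U'^\#$, which by part (3) is a morphism of parabolic contact structures (intertwining the two transverse infinitesimal automorphisms up to a nowhere--vanishing locally constant factor). At the level of the $G_0$--bundles the maps commute, so the induced $\Psi_0$ satisfies $q_0'\o\Psi_0=q_0$, whence $\Psi^*\o(q_0')^*=q_0^*$ on sections. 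Since $D^\#$ is compatible with local isomorphisms (Section \ref{2.2}) it commutes with $\Psi^*$, and the computation I would run is
$$
q_0^*\o D_U=D^\#\o q_0^*=D^\#\o\Psi^*\o(q_0')^*=\Psi^*\o D^\#\o(q_0')^*=\Psi^*\o(q_0')^*\o D_U'=q_0^*\o D_U'.
$$
As $q_0^*$ is injective, this forces $D_U=D_U'$. Note that the locally constant factor in part (3) only rescales $\xi_0$ and hence leaves the kernel of $\Cal L_{\xi_0}$, i.e.\ the image of $q_0^*$, unchanged, so it causes no difficulty. Because $D^\#$ is compatible with restriction to open subsets, the operators $D_U$ then agree on each overlap $U\cap V$ (both coincide there with the operator descended from any contactification of $U\cap V$), so they glue to a well--defined operator $D_M$ on all of $M$; the local uniqueness from Theorem \ref{thm2.4} makes $D_M$ canonical.

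It then remains to verify naturality. Given a morphism $\ph:M\to N$ of PCS--structures and $\si\in\Ga(\Cal WN)$, the identity $D_M(\ph^*\si)=\ph^*(D_N\si)$ is local, so I would check it over open subsets carrying contactifications. Choosing such contactifications in $M$ and $N$, part (2) of Theorem \ref{thm2.3} lifts $\ph$ locally to a contactomorphism $\Phi$ of the contactifications, which by part (3) is again a morphism of parabolic contact structures. The induced $G_0$--bundle maps satisfy $q_0^N\o\Phi_0=\ph\o q_0^M$, so on sections $\Phi^*\o(q_0^N)^*=(q_0^M)^*\o\ph^*$; combining this with the defining relations for $D_M,D_N$ and the invariance $D^\#\o\Phi^*=\Phi^*\o D^\#$, the chase I would carry out is
$$
(q_0^M)^*\o D_M\o\ph^*=D^\#\o\Phi^*\o(q_0^N)^*=\Phi^*\o D^\#\o(q_0^N)^*=(q_0^M)^*\o\ph^*\o D_N.
$$
Injectivity of $(q_0^M)^*$ then yields $D_M\o\ph^*=\ph^*\o D_N$, the required naturality, so that $D^\#$ canonically induces the natural operator $D_M$ on the category of PCS--structures of type $(G,P)$.
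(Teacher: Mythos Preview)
Your proposal is correct and follows essentially the same strategy as the paper's proof: build $D_M$ locally from contactifications via Theorem~\ref{thm2.4}, use parts (2) and (3) of Theorem~\ref{thm2.3} to lift the identity (respectively a general morphism) to a morphism of parabolic contact structures, and exploit invariance of $D^\#$ under such morphisms to prove well--definedness and naturality. The only organizational difference is that you first establish independence of the contactification on a fixed $U$ and then glue, whereas the paper compares the two local operators directly on an overlap $U_{ij}$; also, the paper isolates the pullback identity $\Psi^*\tilde q_0^*\si=q_0^*\Phi^*\si$ as an explicit ``claim'' and cautions that it mixes two different notions of pullback, while you derive the special case $\Psi^*\o(q_0')^*=q_0^*$ directly from $q_0'\o\Psi_0=q_0$ --- both arguments are the same once phrased in terms of equivariant functions.
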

\begin{proof}
This follows rather easily from the results on PCS--contactifications
in Theorem \ref{thm2.3}. Let us start with a PCS--structure on $M$, a
section $\si\in\Ga(\Cal WM)$ and a point $x\in M$. By part (1) of
Theorem \ref{thm2.3}, there is an open neighborhood $U$ of $x$ in $M$
which can be realized as a PCS--quotient $q:U^\#\to U$. Given the
invariant operator $D^\#$ on parabolic contact structures, we can use
Theorem \ref{thm2.4} to obtain an operator $D_U:\Ga(\Cal WU)\to \Ga(\tcw
U)$. In particular, we can apply this to $\si|_U$ to obtain a section
of $\tcw M$ defined over $U$.

To complete the proof, we need a fact on pullbacks of sections. (This
may look rather obvious in written form, but this is slightly
deceiving, since this relates two different concepts of pullback,
which are denoted in the same way. In particular, one of this is
non--standard since it relates bundles over different manifolds.)
Suppose that $M$ and $\tilde M$ are PCS--structures endowed with
parabolic contactifications $q:M^\#\to M$ and $\tilde q:\tilde
M^\#\to\tilde M$, and let us denote the corresponding bundles by $\Cal
G_0$, $\tcg_0$, $\Cal G_0^\#$ and $\tcg_0^\#$, respectively. Assume
further that $\Ph:\Cal G_0\to\tcg_0$ is a morphism of PCS--structures
and that $\Ps:\Cal G_0^\#\to\tcg_0^\#$ is a lift to a morphism of
parabolic contact structures which is compatible with the
infinitesimal automorphisms up to a constant multiple. Then for any
section $\si$ of a natural bundle over $\tilde M$, we have
$\Ps^*\tilde q^*_0\si=q^*_0\Ph^*\si$.

To prove this claim, observe that if $f$ is the equivariant function on
$\tcg_0$ corresponding to $\si$, then $\tilde q^*_0\si$ and $\Ps^*\tilde
q^*_0\si$ correspond to $f\o\tilde q_0$ and $f\o\tilde q_0\o\Ps$,
respectively. Since $\Ps$ is compatible with the infinitesimal
automorphisms up to constant multiple, the fact that $\tilde q^*_0\si$
lies in the kernel of $\Cal L_{\tilde\xi_0}$ implies that $\Ps^*\tilde
q^*_0\si$ lies in the kernel of $\Cal L_{\xi_0}$.  Thus it must be of the
form $q^*_0\tau$ for some section $\tau$ and then $\tilde
q_0\o\Ps=\Ph\o q_0$ implies that $\tau=q^*_0\Ph^*\si$, which completes
the proof of the claim. 

Returning to $\Cal WM$ and $\tcw M$, we can carry out the above
construction for the elements of an open covering $\{U_i:i\in I\}$ of
$M$, so for each $i$ we descend $D^\#(q_i^*(\si|_{U_i}))$ to a section
$D_i(\si)$ of $\tcw M$ defined over $U_i$. If $U_i\cap
U_j=U_{ij}\neq\emptyset$, then by parts (2) and (3) of Theorem
\ref{thm2.3}, the identity on $U_{ij}$ locally lifts to an isomorphism
$\Ps_{ij}$ of the parabolic contact structures which is compatible
with the transversal infinitesimal automorphisms up to a constant
multiple. By the claim, this implies that over the open subset in
question, we have $\Ps_{ij}^*q_j^*\si=q_i^*\si$. Invariance of $D^\#$
now implies that $D^\#(q_i^*\si)=\Ps_{ij}^*D^\#(q_j^*\si)$, so
$q_i^*(D_i(\si))=\Ps_{ij}^*q_j^*(D_j(\si))$. Again by the claim, the
right hand side equals $q_i^*(D_j(\si))$. Thus, locally on $U_{ij}$,
we have $D_i(\si)=D_j(\si)$, so this has to hold on all of $U_{ij}$.

On the one hand, this shows that the sections $D_i(\si)$ can be pieced
together to define a global section $D_M(\si)$. On the other hand,
applying the argument to the union of two coverings, we see that
$D_M(\si)$ is independent of the choice of covering, so we have
obtained a well defined linear differential operator $D_M$. Thus it
remains to prove that the $D_M$ define a natural operator. Naturality
of a differential operator can be verified locally, so in view of
Theorem \ref{thm2.3}, it suffices to do this for PCS--structures
admitting a global contactification and for morphisms which lift to
the contactifications. But in this case, the required property follows
immediately from the claim.
\end{proof}

\section{(Relative) BGG complexes and subcomplexes}\label{3} 
Since PCS--structures admit canonical connections, constructing
differential operators, which are intrinsic to such structures, is not
difficult. As in Riemannian geometry, one can simply form iterated
covariant derivatives with respect to induced linear connections,
combine them with iterated covariant derivatives of the torsion and
the curvature of the canonical connection and then apply tensorial
operations. Constructing differential \textit{complexes} naturally
associated to such geometries is a completely different issue, and it
is not at all clear, how to do this ``by hand''.

On the other hand, there are general constructions for a large number
of differential complexes on locally flat parabolic contact structures
as well as on certain non--flat structure of type $A_n$. All these
complexes can be pushed down to PCS--quotients thus providing a large
number of differential complexes, which are naturally associated to
such structures.

\subsection{BGG sequences}\label{3.1} 
Let us start with a type $(G,P)$ of parabolic geometries and a finite
dimensional representation $\Bbb V$ of $G$. Associated to these data
there is a sequence of invariant differential operators acting on
sections of certain irreducible natural vector bundles over parabolic
geometries of type $(G,P)$. A construction for these sequences was
given in \cite{CSS-BGG} and improved in \cite{Calderbank-Diemer}. More
recently, the construction was generalized and substantially improved
in \cite{Rel-BGG2}. In the case of the homogeneous model $G/P$ of the
geometry, the resulting sequence turns out to be a complex and a fine
resolution of the locally constant sheaf $\Bbb V$. In a certain sense
this resolution is dual to Lepowsky's generalization (see
\cite{Lepowsky}) of the Bernstein--Gelfand--Gelfand resolution of
$\Bbb V$ by homomorphisms of Verma modules. The fact that the BGG
sequence is a complex, extends from the homogeneous model $G/P$ to all
parabolic contact structures which are locally isomorphic to $G/P$,
i.e.~to the locally flat geometries. Also in this more general case,
the BGG complex is a fine resolution of a sheaf, which can be described
explicitly as the sheaf of those sections of the tractor bundle
associated to $\Bbb V$, which are parallel for the canonical tractor
connection. 

Applying the push--down construction from Section \ref{2} to a
BGG-sequence, one therefore obtains a complex if all local
contactifications of a given PCS--structure are locally flat. The
latter property is analyzed in Theorem 7 and Corollary 1 of
\cite{PCS2}, where it is shown to be equivalent to the fact that the
canonical connection associated to the PCS--structure is a special
symplectic connection in the sense of
\cite{Cahen-Schwachhoefer}. Since conversely any special symplectic
connection is the canonical connection associated to a PCS--structure,
we conclude that the pushed down versions of BGG complexes are
associated to special symplectic connections. We should point out here
that global contactifications as discussed in Sections 2.6 and 3.4 of
\cite{PCS2} are of particular interest here. These are
contactifications of compact PCS--structures, which are circle
bundles, and in this case it is possible to analyze the cohomology of
the resulting complexes, see Section \ref{4}.

The bundles showing up in a BGG sequence are associated to the
representations of $P$ on the Lie algebra homology groups $H_*(\frak
p_+,\Bbb V)$. Here $\frak p_+$ is the nilradical of the parabolic
subalgebra $\frak p\subset\frak g$, and there is a general result that
these homology representations are always completely reducible. Hence
in the complex case, they can be described in terms of weights and
Kostant's theorem (see \cite{Kostant}) gives an explicit way to
compute the relevant weights algorithmically. These results can be
extended to the real case using complexifications. In what follows, we
will usually suppress such computations and just describe the
resulting bundles explicitly.

Invariance of the operators in a BGG sequence can also be used to
determine the principal part of the operator. By construction, the
principal symbol of any operator showing up in a BGG sequence has to
be a natural bundle map and thus is induced by a $P$--homomorphism 
between the inducing representations. As noted above, the inducing
representations are completely reducible, so the action of $P$ comes
from a representation of the reductive group $G_0$, whose
representation theory is well understood. 

To described the BGG complexes associated to special symplectic
connections, the main task therefore is to convert the representation
theory information available for the parabolic contact structures into
information on bundles on PCS--quotients. We will do this in a bit
more detail for the $C_n$ and $A_n$ types and sketch how things look
for the other types. 

\subsection{Example: Complexes associated to connections of Ricci
  type}\label{3.2}  

This is the case discussed in Section 3 of \cite{PCS2}. One starts
with a conformally Fedosov structure on a smooth manifold $M$, which
is given by a conformally symplectic structure $\ell\subset \La^2T^*M$
and a projective class of torsion free linear connections on $TM$,
which satisfy a certain compatibility condition. Locally, this
structure determines a symplectic form $\om$ on $M$ (up to a constant
multiple) and a unique connection $\nabla$ in the projective class
such that $\nabla\om=0$. So locally, the structure is just given by a
torsion--free symplectic connection, see Proposition 2 in
\cite{PCS2}. Any local contactification of $M$ then inherits a
canonical contact projective structure, which is locally flat if and
only if the connection $\nabla$ is of Ricci type, see Theorem 8 and
Corollary 1 in \cite{PCS2}.

The irreducible natural bundles available in this situation are easy
to describe. They are equivalent to irreducible representation of
$\frak g_0\cong\mathfrak{csp}(2n)$, where $2n=\dim(\frak
g_{-1})$. Irreducible representations of the center are
one--dimensional and thus give rise to natural line bundles. On the
other hand the irreducible representations of $\frak{sp}(2n)$ can all
be constructed from the standard representation by tensorial
operations. For a contact projective structure on $M^\#$, the standard
representation of $\frak g_0$ corresponds to the contact subbundle
$H\subset TM^\#$. The basic natural line bundle in this case is the
quotient $Q:=TM^\#/H$, and one can construct density bundles as (real)
roots of the line bundle $Q\otimes Q$, which has to be trivial. 

For a PCS--structure on $M$, the standard representation of $\frak
g_0$ corresponds to the tangent bundle $TM$. Now on $M^\#$, the Levi
bracket induces an isomorphism $H\cong H^*\otimes Q$, whereas on $M$,
inserting vector fields into elements of $\ell$ defines an isomorphism
$TM\otimes\ell\to T^*M$, which shows that the representation $\frak
g_{-2}$ of $\frak g_0$, which gives rise to $Q$ on $M^\#$ corresponds
to $\ell^*$ on $M$, compare with Section 3.2 of \cite{Cap-Salac}. This
is sufficient to explicitly associate to any irreducible
representation of $\frak g_0$ a weighted tensor bundle (a tensor
product of a natural line bundle with a natural subbundle of a tensor
bundle) on $M$.

Using this, we can give a description of the resulting sequences in
the spirit of the parametrization of BGG sequences for AHS--structures
introduced in \cite{BCEG}. To do this, we make one more
observation. Suppose that $V$ and $W$ are two irreducible
representations of $\frak g_0$. Then in the tensor product $V\otimes
W$, there is a specific irreducible component called the
\textit{Cartan product}, which we denote by $V\odot W$. This is the
component of maximal highest weight respectively the subrepresentation
generated by the tensor product of two highest weight vectors. Given
two natural tensor bundles $E$ and $F$, we denote by $E\odot F\subset
E\otimes F$ the irreducible tensor subbundle corresponding to the
Cartan product of the inducing representations. Since $V\odot W$
occurs in $V\otimes W$ with multiplicity one, there is a unique (up to
scale) natural bundle map $E\otimes F\to E\odot F$, which we call the
canonical projection onto the Cartan product.

\begin{thm}\label{thm3.2}
Let $E$ be an irreducible tensor bundle on conformally symplectic
manifolds of dimension $2n\geq 4$ and let $k\geq 0$ be an integer,
which, depending on $E$ has to be even or odd. Then pushing down an
appropriate BGG sequence on local contactifications, one obtains on any
conformally Fedosov manifold $M$ of dimension $2n$ a sequence of
weighted irreducible tensor bundles and invariant differential
operators of the form
$$
\Ga(E_0)\overset{D_0}{\longrightarrow} \Ga(E_1)
\overset{D_1}{\longrightarrow} \cdots
\overset{D_{2n-1}}{\longrightarrow} \Ga(E_{2n}) \overset{D_{2n}}{\longrightarrow} 
\Ga(E_{2n+1}).
$$ This sequence is a complex if the canonical connection $\nabla$ of
$M$ is of Ricci--type. Moreover, $E_0$ is a tensor product of $E$ with
some density bundle, and $E_1$ is the Cartan product $S^{k+1}T^*M\odot
E_0$. The operator $D_0$ has order $k+1$ and its principal part is
given by forming the $k+1$--fold covariant derivative with respect to
(the connection induced by) $\nabla$, symmetrizing and then projecting
to the Cartan product.
\end{thm}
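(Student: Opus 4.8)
The plan is to obtain the whole sequence by applying the push--down machinery of Section~\ref{2} to a suitably chosen BGG sequence on the contact projective structures upstairs, and then to read off the displayed properties from Kostant's theorem together with the dictionary of Section~\ref{3.2}. Here $\frak g$ is of type $C_n$, so $\frak g=\mathfrak{sp}(2n+2,\Bbb R)$ and any local contactification $M^\#$ carries a contact projective structure of dimension $2n+1$. The first task is to pin down a finite--dimensional irreducible representation $\Bbb V$ of $G$ producing the right bundles. Writing $\Bbb E$ for the irreducible representation of $\frak g_0\cong\mathfrak{csp}(2n)$ corresponding to $E$, I would choose $\Bbb V$ with highest weight $\la$ so that the coinvariants $H_0(\frak p_+,\Bbb V)=\Bbb V/\frak p_+\Bbb V$ agree, as a $\frak g_0$--representation, with $\Bbb E$ up to a central (density) character, and so that the single length--one Hasse element $s_\al$ (for $\al$ the crossed simple root) yields $H_1(\frak p_+,\Bbb V)$ with $s_\al\cdot\la$ as highest weight. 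Kostant's theorem (\cite{Kostant}) computes both homologies from $\la$, with the order $k+1$ of $D_0$ governed by $\langle\la+\rho,\al^\vee\rangle$; the parity constraint on $k$ records exactly which values of the crossed--node coefficient of $\la$ give a weight that is dominant and integral given the fixed $\Bbb E$.

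With $\Bbb V$ fixed, the construction of \cite{CSS-BGG} (as improved in \cite{Calderbank-Diemer}) provides a sequence of invariant differential operators $D^\#_i$ between sections of the completely reducible bundles $\Cal H_iM^\#:=\Cal G^\#\x_P H_i(\frak p_+,\Bbb V)$ on contact projective structures of type $(G,P)$. Since each homology representation is completely reducible and hence induced from $G_0$, Theorem~\ref{thm2.5} applies directly: every $D^\#_i$ descends to a natural differential operator $D_i$ on the category of PCS--structures of type $(G,P)$, acting between the bundles $E_i:=\Cal G_0\x_{G_0}H_i(\frak p_+,\Bbb V)$. A count of minimal coset representatives gives $|W^{\frak p}|=2(n+1)$, so there are exactly $2n+2$ bundles $E_0,\dots,E_{2n+1}$, and we obtain the displayed sequence with no assumption on $\nabla$.

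To see that the sequence is a complex when $\nabla$ is of Ricci type, I would invoke the equivalence recalled in Section~\ref{3.2} (Theorem~8 and Corollary~1 of \cite{PCS2}): in that case every local contactification $M^\#$ is locally flat, so the upstairs BGG sequence is a genuine complex, $D^\#_{i+1}\circ D^\#_i=0$. Over a local PCS--quotient $q:U^\#\to U$ furnished by Theorem~\ref{thm2.3}, applying the intertwining relation $q_0^*\circ D_i=D^\#_i\circ q_0^*$ of Theorem~\ref{thm2.4} twice gives $q_0^*\big(D_{i+1}\circ D_i\,\si\big)=D^\#_{i+1}\circ D^\#_i(q_0^*\si)=0$ for every local section $\si$. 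Since $q_0^*$ is injective by Lemma~\ref{lem2.4}, $D_{i+1}\circ D_i$ vanishes over $U$; as $M$ is covered by such $U$ and the $D_i$ are natural, $D_{i+1}\circ D_i=0$ on all of $M$.

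It remains to identify the bundles and the leading term of $D_0$. Translating $H_0$ and $H_1$ from $\frak g_0$--representations into weighted tensor bundles on $M$ via the dictionary of Section~\ref{3.2} (standard representation $\leftrightarrow TM$, and $\frak g_{-2}\leftrightarrow\ell^*$) yields $E_0=E\otimes L$ for a density bundle $L$ and $E_1=S^{k+1}T^*M\odot E_0$, as asserted. Granting this, the order and symbol of $D_0$ are forced: its principal symbol is a $G_0$--equivariant map $S^{m}T^*M\otimes E_0\to S^{k+1}T^*M\odot E_0$, which by a highest--weight count is nonzero only for $m=k+1$, where it is the (multiplicity--one) projection onto the Cartan product; comparing the upstairs symbol, which involves only contact directions, with the downstairs one through $q_0^*\th=\th^\#_{-1}$ identifies the operator as the symmetrized $(k+1)$--fold covariant derivative with respect to the canonical connection $\nabla$, projected to $E_1$. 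The step I expect to be the main obstacle is the representation theory of the first paragraph: checking with Kostant's theorem that one weight $\la$ realizes both $E_0$ and $E_1$ as stated, tracking the precise density weight in $E_0=E\otimes L$, and matching the parity of $k$ to the integrality and dominance of $\la$.
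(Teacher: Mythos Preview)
Your overall strategy matches the paper's: choose $\Bbb V$ via Kostant, push down via Section~\ref{2}, and read off the bundles and symbol. But there is one genuine error and one gap.

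The error concerns the parity condition on $k$. You write that it ``records exactly which values of the crossed--node coefficient of $\la$ give a weight that is dominant and integral.'' This is false: if $E$ corresponds to a dominant integral weight $a_2\om_2+\dots+a_{n+1}\om_{n+1}$ for $\frak g_0^0\cong\frak{sp}(2n,\Bbb R)$, then $k\om_1+a_2\om_2+\dots+a_{n+1}\om_{n+1}$ is dominant integral for $\frak g=\mathfrak{sp}(2n+2,\Bbb R)$ for \emph{every} non--negative integer $k$; there is no obstruction at the Lie algebra level. The actual constraint, as the paper's proof makes explicit, is that contact projective structures are modelled on $G=PSp(2n+2,\Bbb R)$, not on $Sp(2n+2,\Bbb R)$, so $\Bbb V$ must integrate to the quotient group. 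This requires the center $\{\pm\Bbb I\}\subset Sp(2n+2,\Bbb R)$ to act trivially, which happens precisely when the sum of the coefficients of the $\om_i$ with odd $i$ is even. Fixing $\Bbb E$ then forces $k$ to have a definite parity. (Compare also Remark~\ref{rem3.2}, which shows the restriction disappears if one can lift to an $Sp$--geometry.)

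The gap is in your count. Knowing $|W^{\frak p}|=2n+2$ gives the total number of irreducible summands across all homology degrees, but does not by itself yield a sequence of $2n+2$ \emph{irreducible} bundles as the theorem asserts. For that you need the finer statement that $W^{\frak p}$ contains exactly one element of each length $0,1,\dots,2n+1$; the paper checks this directly via the algorithms in Section~3.2.16 of \cite{book}. Your argument for the order and principal part of $D_0$ is otherwise in line with the paper's, which likewise derives $E_1$ by computing the affine action of the simple reflection $s_{\al_1}$ on the chosen weight and then appeals to invariance for the symbol.
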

\begin{proof}
We have $\frak g=\mathfrak{sp}(2n+2,\Bbb R)$, $\frak p\subset\frak g$
is the stabilizer of a line in the standard representation. The
semisimple part $\frak g_0^0$ of $\frak g_0$, which is isomorphic to
$\frak {sp}(2n,\Bbb R)$, can be identified with the space of those
maps, which vanish on a non--degenerate plane containing this
line. Since $\frak g$ is a split real form, it has a root
decomposition and there is a simple root $\al_1$, such that a root
space $\frak g_\al$ lies in $\frak g_0^0$ if and only if $\al$ is
linear combination of the other simple roots
$\al_2,\dots,\al_{n+1}$. Now let $\om_1,\dots,\om_{n+1}$ be the
corresponding fundamental weights, so dominant integral weights for
$\frak g$ are linear combinations of these weights with non--negative
integral coefficients. If such a linear combination does not involve
$\om_1$, then it can naturally be viewed as a weight of $\frak g_0^0$,
and all weights of $\frak g_0^0$ arise in this way.

For our purposes, it is better to describe representations by the
negatives of lowest weights rather than using the usual description in
terms of highest weights, but this causes only small differences. The
irreducible tensor bundle $E$ then corresponds to a weight of $\frak
g_0^0$. Representing this weight as a linear combination of
$\om_2,\dots,\om_{n+1}$ and adding $k\om_1$ (where $k$ is the chosen
integer), we obtain a dominant integral weight. This corresponds to a
finite dimensional irreducible representation $\Bbb V$ of $\frak g$,
which integrates to the group $Sp(2n+2,\Bbb R)$. Now let us in
addition assume that the sum of the coefficients of those $\om_i$ with
odd $i$ is even, which, depending on $E$, means that $k$ has to be even
or that $k$ has to be odd. Then the homomorphism $Sp(2n,\Bbb R)\to
GL(\Bbb V)$ defining the representation factorizes to
$G:=PSp(2n+2,\Bbb R)$, and hence $\Bbb V$ gives rise to a
BGG--sequence on parabolic geometries of type $(G,P)$ which are
equivalent to contact projective structures.

Via the construction in Section \ref{2} we can descend this to a
sequence of invariant differential operators on conformally Fedosov
structures. The BGG sequence is a complex if the contact projective
structure is locally flat, so we obtain a complex if the canonical
connection of the conformally Fedosov structure is of Ricci type. To
prove the rest of the theorem we need some information on the bundles
occurring in the BGG sequence, which all follow from the description
of the homology groups $H_*(\frak p_+,\Bbb V)$ via Kostant's
theorem. The homology groups split into a direct sum of different
irreducible representations of $\frak g_0$, and the corresponding
weights are obtained from the weight determined by $\Bbb V$ by the
affine action of a certain subset $W^{\frak p}$ of the Weyl group $W$
of $\frak g$. The homology degree in which an irreducible component
occurs is given by the length of the corresponding Weyl group element.

Using the algorithms from Section 3.2.16 of \cite{book}, one easily
verifies that $W^{\frak p}$ consists of $2n+2$ elements, which have
length $0$, $1$, \dots, $2n+1$, respectively. This implies that the
BGG sequence and hence the descended sequence has the claimed form
with an irreducible bundle in each degree between $0$ and $2n+1$. The
unique element of length zero in $W^{\frak p}$ is the identity, so
$H_0(\frak p_+,\Bbb V)$ is the representation of $\frak g_0$
corresponding to the same weight as $\Bbb V$. But this exactly says
that $E_0$ is the tensor product of $E$ with a natural line
bundle. The unique element of length one in $W^{\frak p}$ is the
simple reflection corresponding to $\al_1$. The affine action by this
reflection maps $k\om_1+a_2\om_2+\sum_{i\geq 3}a_i\om_i$ to
$(-k-2)\om_1+(a_2+k+1)\om_2 +\sum_{i\geq 3}a_i\om_i$, which is just
the sum of the initial weight with $-(k+1)\al_1$. Now $\al_1$ is the
lowest weight of $\frak g_1\cong\frak g_{-1}^*$, so $-(k+1)\al_1$ is
the negative of the lowest weight of the representation inducing
$S^{k+1}T^*M$, which implies the claim on $E_1$. The claim on the
principal part of $D_0$ then follows from invariance on the level of
contact projective structures.
\end{proof}

Via Kostant's theorem, the representations inducing the bundles $E_i$
in the sequence can be determined explicitly and
algorithmically. However, for $i\geq 2$, the explicit form of the
bundles depends on the initial representation in a more complicated
way. Let us just describe one situation in a bit more detail, which in
particular covers the complexes used in \cite{E-G}. To obtain these
complexes, we use the global contactification $S^{2n+1}\to\Bbb CP^n$
defined by the Hopf--fibration. This can be interpreted as a
PCS--contactification of the conformally Fedosov structure on $\Bbb
CP^n$ defined by the Levi--Civita connection of the Fubini--Study
metric by the flat contact projective structure on $S^{2n+1}$, see
Section 3.4 in \cite{PCS2}. 

In the language of Theorem \ref{thm3.2}, the relevant complexes
correspond to the case that $E_0=S^\ell T^*M$ for some $\ell\in\Bbb N$
and to $k=0$. Hence one starts with a completely symmetric covariant
tensor of valence $\ell$ and $D_0$ is given by taking a covariant
derivative and then completely symmetrizing the result. For the
applications in \cite{E-G} one mainly needs the principal part of the
operator $D_1$ in that complex and the information that, on $\mathbb
CP^n$, one has $\ker(D_1)=\im(D_0)\subset \Ga(E_1)$. Here we indicate
how to get the necessary information on the principal part, the
cohomology of the sequence will be discussed in Section \ref{4} below,
see in particular Theorem \ref{thm4.6}.

We actually discuss a slightly more general setting, looking at the
case that $E_0=S^\ell T^*M$ with an arbitrary even number $k$. In the
language of the proof of Theorem \ref{thm3.2}, the weight determining
$\Bbb V$ is $\la:=k\om_1+\ell\om_2$. The unique element of length two
in the Hasse diagram is given by $\si_1\o\si_2$, where we write
$\si_i$ for the reflection corresponding to the $i$th simple root. The
affine action of this composition maps $\la$ to
$(-k-\ell-3)\om_1+k\om_2+(\ell+1)\om_3$, so this is the weight
corresponding to the bundle $E_2$, which we denote by $E_2^k$ to
indicate the dependence on $k$. The bundle $E_2^0$ is described in
\cite{E-G} in detail. Up to a twist by a natural line bundle, this is
the Cartan product of $\ell+1$ copies of $\La^2T^*M$, i.e.~it
corresponds to the highest weight subspace in
$S^{\ell+1}(\La^2T^*M)$. Hence it can be viewed as tensors with
$2\ell+2$ indices which come up as $\ell+1$ skew symmetric pairs and
the tensor is symmetric under permutations of the pairs of indices.

Representation theory also implies that there is a unique (up to a
constant) natural bundle map $S^{\ell+1}T^*M\otimes S^{\ell+1}T^*M\to
E_2^0$ on conformally symplectic manifolds. This is basically given by
grouping the indices into pairs and then alternating each pair. By
Theorem \ref{thm3.2}, the bundle $E_1$ is, for $k=0$, isomorphic to
$S^{\ell+1}T^*M$. Hence one can use information on BGG sequences on
contact projective structures, to see that $D_1$ has order $\ell+1$
and obtain information on its principal part.

For general $k$, the situation is similar. Up to a twist by a natural
line bundle, $E_2^k$ is the Cartan product of $S^kT^*M$ and $E_2^0$,
while $E_1^k=S^{k+\ell+1}T^*M$. Basic representation theory again
shows that there is a unique (up to scale) natural bundle map
$S^{\ell+1}T^*M\otimes E_1^k\to E_2^k$ on conformally symplectic
manifolds. This shows that $D_1$ still has order $\ell+1$ in the
general case, and one can use results on BGG sequences to get
information on its principal part.

\begin{remark}\label{rem3.2}
Let us briefly discuss the restriction on the parity of the integer
$k$ which determines the order of the first operator in the sequence
in Theorem \ref{thm3.2}. From the proof it is clear that this is only
needed in order that a certain Lie algebra representation integrates
to a group representation of $PSp(2n+2,\Bbb R)$. However, for any
choice of $k$, the Lie algebra representations integrate to
representations of $Sp(2n+2,\Bbb R)$. Hence this restriction could be
avoided if one can construct a parabolic geometry of type
$(Sp(2n+2),P)$ on the contactifications, for example by choosing some
additional data on the given conformally Fedosov structure. 

It seems very plausible that this is possible, at least locally, or
provided that the line bundle $\ell$ defining the conformally
symplectic structure is trivial. However, Section 3.4 of \cite{PCS2}
shows that this does not work in a straightforward way for the global
contactification $S^{2n+1}\to\Bbb CP^n$ defined by the
Hopf--fibration. We will not study this question further here.  
\end{remark}

\subsection{Complexes associated to Bochner--bi--La\-gran\-gean
  metrics}\label{3.3} 

We next discuss the case of PCS--structures associated to simple Lie
algebras of type $A_n$. Here there are two basic structures related to
different real forms of $\frak{sl}(n+2,\Bbb C)$, see Section 3.2 of
\cite{PCS1}. For the split real form $\frak{sl}(n+2,\Bbb R)$, the
corresponding PCS--structure is given by a conformally symplectic
structure $\ell\subset \La^2T^*M$ and a decomposition $TM=E\oplus F$
into a sum of Lagrangean subbundles. Such a structure is torsion--free
if and only if the subbundles $E$ and $F$ in $TM$ are involutive. In
this case, one obtains a para--K\"ahler--metric on $M$ and the
canonical connection for the PCS--structure is the Levi--Civita
connection of this metric.

Parabolic contactification for PCS--structures of this type produces
a so--called Lagrangean contact structure, i.e.~a contact structure
together with a decomposition of the contact subbundle into a direct
sum of Lagrangean subbundles, see Section 4.2.3 of
\cite{book}. Torsion--freeness in this picture again is equivalent to
involutivity of the two Lagrangean subbundles. To obtain differential
complexes from BGG--sequences on the parabolic contactification, we
need this contactification to be locally flat (and thus in particular
torsion--free). By Theorem 7 of \cite{PCS2}, this is the case if
and only if the metric is Bochner--bi--Lagrangean.

To formulate the theorem on BGG sequences in this case, observe that
for $\frak g=\frak{sl}(n+2,\Bbb R)$ the algebra $\frak g_0$ has center
$\Bbb R^2$ and semi--simple part $\frak{sl}(n,\Bbb R)$. Up to twisting
by natural line bundles the subbundles $E,F\subset TM$ correspond to
the standard representation of $\frak{sl}(n,\Bbb R)$ and its
dual. Hence all bundles corresponding to irreducible representations
of $\frak g_0$ can be obtained from natural line bundles and these two
basic bundles via tensorial constructions. Thus we can use a similar
parametrization of BGG sequences as in Theorem \ref{thm3.2}.

\begin{thm}\label{thm3.3}
  Let $\Bbb W$ be an irreducible representation of $\frak{sl}(n,\Bbb
  R)$ and let $W$ be the corresponding natural tensor bundle on a
  PCS--manifold $(M,\ell,E,F)$ of para--K\"ahler type of dimension
  $2n\geq 6$ (with $E$ playing the role of the standard representation
  and $F$ playing the role of its dual). Let $k,\ell\geq 0$ be
  integers such that for even $n$, the number $k+\ell$ is, depending
  on $W$, either even or odd.

Then pushing down an appropriate BGG sequence on parabolic
contactifications leads to a sequence of tensor bundles and invariant
differential operators of the form
$$
\Ga(W_0)\overset{D_0}{\longrightarrow} \Ga(W_1)
\overset{D_1}{\longrightarrow} \cdots
\overset{D_{2n-1}}{\longrightarrow} 
\Ga(W_{2n}) \overset{D_{2n}}{\longrightarrow} 
\Ga(W_{2n+1}).
$$ This sequence is a complex, if $(M,\ell,E,F)$ is
Bochner--bi--Lagrangean. Moreover, the bundles $W_0$ and $W_{2n+1}$
are irreducible, while for $i=1,\dots,n$ the bundles $W_i$ and
$W_{2n+1-i}$ each split into a direct sum of $i+1$ irreducible tensor
bundles. Finally, $W_0$ is the tensor product of $W$ with a natural
line bundle, while $W_1=W_{(1,0)}\oplus W_{(0,1)}$ with
$W_{(1,0)}=S^kE^*\odot W_0$ and $W_{(0,1)}\cong S^\ell F^*\odot W_0$.
\end{thm}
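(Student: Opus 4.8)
The plan is to follow the same blueprint as the proof of Theorem~\ref{thm3.2}, replacing the type $C_n$ data with type $A_n$ data and carefully tracking the richer branching that occurs because $\frak g_0$ now has a two--dimensional center and semisimple part $\frak{sl}(n,\Bbb R)$. First I would set up the root data: write $\frak g=\frak{sl}(n+2,\Bbb R)$ with $\frak p$ the parabolic corresponding to the contact grading, so that there are two ``crossed'' simple roots, say $\al_1$ and $\al_{n+1}$ (the outer nodes of the $A_{n+1}$ Dynkin diagram), while $\al_2,\dots,\al_n$ generate the $\frak{sl}(n,\Bbb R)$ inside $\frak g_0^0$. With fundamental weights $\om_1,\dots,\om_{n+1}$, the representation $\Bbb W$ of $\frak{sl}(n,\Bbb R)$ inducing $W$ is encoded as a weight in $\om_2,\dots,\om_n$; adding $k\om_1+\ell\om_{n+1}$ then yields a dominant integral $\frak g$--weight $\la$ defining the irreducible representation $\Bbb V$. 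The parity hypothesis on $k+\ell$ for even $n$ is exactly the condition that the corresponding $\frak g$--representation integrates to the adjoint--type group $G=PSL(n+2,\Bbb R)$ for which type $(G,P)$ geometries are Lagrangean contact structures; this must be verified by checking triviality on the relevant component of the center, just as in Theorem~\ref{thm3.2}.

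Next I would invoke Kostant's theorem to describe $H_*(\frak p_+,\Bbb V)$, the homology controlling the BGG bundles. The combinatorial heart is the Hasse diagram $W^{\frak p}\subset W$: using the algorithm of Section 3.2.16 of \cite{book} one determines the set of minimal--length coset representatives for the two--node parabolic in type $A_{n+1}$. The count and length distribution of these elements is what produces a sequence of length $2n+1$ with an irreducible bundle in degrees $0$ and $2n+1$ and with exactly $i+1$ irreducible summands in degrees $i$ and $2n+1-i$ for $1\le i\le n$; establishing this combinatorial fact is the step I expect to be the \emph{main obstacle}, since unlike the type $C_n$ case (where each degree carried a single component) one must show precisely how the Weyl orbit branches, and translate the affine--action images of $\la$ back into tensor bundles built from $E$, $F$, and density bundles via the dictionary that $E,F$ correspond to the standard $\frak{sl}(n,\Bbb R)$--representation and its dual.

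Having the abstract shape, I would then pin down the low--degree bundles explicitly. Degree zero is the identity coset, so $H_0(\frak p_+,\Bbb V)$ corresponds to the same weight as $\Bbb V$ restricted to $\frak g_0$, giving $W_0=W\otimes(\text{line bundle})$. The two length--one elements are the simple reflections $\si_1$ and $\si_{n+1}$; their affine actions subtract multiples of $\al_1$ and $\al_{n+1}$ respectively. Since $\al_1$ is the lowest weight of the $\frak g_0$--representation inducing $S$--powers of $E^*$ and $\al_{n+1}$ that for $F^*$, the affine images produce precisely the Cartan products $W_{(1,0)}=S^kE^*\odot W_0$ and $W_{(0,1)}\cong S^\ell F^*\odot W_0$, proving the claimed splitting $W_1=W_{(1,0)}\oplus W_{(0,1)}$. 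Finally, the descent of the whole BGG sequence to a sequence of invariant operators on the PCS--structure is immediate from Theorem~\ref{thm2.5}, and the complex property under the Bochner--bi--Lagrangean assumption follows because by Theorem~7 of \cite{PCS2} that assumption is equivalent to local flatness of the parabolic contactification, where the BGG sequence is genuinely a complex.
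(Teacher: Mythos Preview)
Your proposal is correct and follows essentially the same route as the paper's proof: set up the $A_{n+1}$ root data with $\al_1,\al_{n+1}$ crossed, build $\Bbb V$ from $\Bbb W$ by adding $k\om_1+\ell\om_{n+1}$, check integrability to the adjoint group (the paper uses $G=PGL(n+2,\Bbb R)$ rather than $PSL$, which matters for odd $n$ where $PGL\cong SL$ and no parity condition is needed), then read off the bundle structure from Kostant's theorem and the Hasse diagram $W^{\frak p}$, and finally descend via Section~\ref{2} and invoke local flatness for the complex property. The only notable difference is that where you flag the Hasse diagram count as the main obstacle to be computed via the algorithm in \cite{book}, the paper simply cites Section~3.6 of \cite{subcomplexes}, where this computation for the two--node $A_{n+1}$ parabolic is already carried out; also, the paper's explicit affine--action computation for the length--one reflections yields $S^{k+1}E^*$ and $S^{\ell+1}F^*$ rather than $S^kE^*$ and $S^\ell F^*$, so your sketch of that step should track the shift by one carefully.
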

\begin{proof}
Put $\frak g=\frak{sl}(n+2,\Bbb R)$ and let $\frak
g_0^0\cong\frak{sl}(n,\Bbb R)$ be the semisimple part of $\frak
g_0$. Then for the standard numbering $\al_1,\dots,\al_{n+1}$ of
simple roots of $\frak g$, a root space $\frak g_{\al}$ is contained
in $\frak g_0^0$ if and only if $\al$ is a linear combination of
$\al_2,\dots,\al_n$ only. Hence these roots form a simple system for
$\frak g_0^0$. Denoting by $\om_1,\dots,\om_{n+1}$ the fundamental
weights corresponding to the simple system
$\{\al_1,\dots,\al_{n+1}\}$, dominant weights for $\frak g_0^0$ are
equivalent to linear combinations of $\om_2,\dots,\om_n$ with
non--negative integral coefficients. As before, we use negatives of
lowest weights rather than highest weights. Anyway, the irreducible
representation $\Bbb W$ determines a dominant integral weight of
$\frak g_0^0$, which can be written as a linear combination of
$\om_2,\dots,\om_n$ with non--negative integral coefficients.

Adding $k\om_1+\ell\om_{n+1}$ to this weight, we obtain a dominant
integral weight for $\frak g$, which determines an irreducible
representation $\Bbb V$ of $\frak g$. Now we have to discuss whether
the representation $\Bbb V$ integrates to the group $G:=PGL(n+2,\Bbb
R)$, thus giving rise to a tractor bundle and hence to a BGG sequence
on Lagrangean contact structures. For odd $n$, this is not a problem,
since the map $A\mapsto \det(A)^{-1/(n+2)}A$ induces an isomorphism
$PGL(n+2,\Bbb R)\cong SL(n+2,\Bbb R)$ in this case. In the case of
even $n$, $PGL(n,\Bbb R)$ is well known to be isomorphic to
$PSL(n,\Bbb R)$. Hence $\Bbb V$ integrates if and only if the center
$\{\pm\Bbb I\}$ of $SL(n+2,\Bbb R)$ acts trivially on $\Bbb V$. In
terms of the negative of the lowest weight, written as
$a_1\om_1+\dots+a_{n+1}\om_{n+1}$, this boils down to the condition
that the sum of all coefficients with odd indices is even. Depending
on $\Bbb W$, this means that $k+\ell$ either has to be even or has to
be odd.

Having $\Bbb V$ as a representation of $G$, the existence of a BGG
sequence on parabolic geometries of type $(G,P)$ which are equivalent
to Lagrangean contact structures follows from the general theory
developed in \cites{CSS-BGG,Calderbank-Diemer,Rel-BGG2}. Using the
results from Section \ref{2}, this can be pushed down to a sequence of
invariant differential operators on PCS--structures of para--K\"ahler
type. The BGG sequence is a complex if the Lagrangean contact
structure is locally flat, so we obtain a complex on
Bochner--bi--Lagrangean manifolds.

The bundles showing up in the BGG sequence correspond to the Lie
algebra homology groups $H_*(\frak p_+,\Bbb V)$, which can be computed
using Kostant's theorem. In particular, $H_0(\frak p_+,\Bbb V)$ is the
$P$--irreducible quotient of $\Bbb V$ which has the same lowest
weight. This is the tensor product of $\Bbb W$ with the
one--dimensional representation corresponding to
$k\om_1+\ell\om_{n+1}$, which shows that $W_0$ is the tensor product
of $W$ with a natural line bundle.

As noted in the proof of Theorem \ref{thm3.2}, the basic structure of
$H_*(\frak p_+,\Bbb V)$ is encoded in the Hasse diagram $W^{\frak p}$
associated to the parabolic $\frak p$, which is determined in Section
3.6 of \cite{subcomplexes}. In particular, this contains the
information of the number of irreducible components in $H_k(\frak
p_+,\Bbb V)$ for each $k$, thus proving the claims on the number of
irreducible summands in each $W_i$. Finally, the components of
$H_1(\frak p_+,\Bbb V)$ correspond to simple reflections contained in
$W^{\frak p}$. These are exactly the reflections corresponding to
$\al_1$ and $\al_{n+1}$, respectively. Their action on the weight
$k\om_1+a_2\om_2+\dots+a_n\om_n+\ell\om_{n+1}$ is given by adding
$-2(k+1)\om_1+(k+1)\om_2$ and $(\ell+1)\om_n-2(\ell+1)\om_{n+1}$,
respectively. Since these are the negatives of the lowest weights of
$S^{k+1}E^*$ and $S^{\ell+1}F^*$, respectively, the claim on $W_1$
follows.
\end{proof}

\subsection{Remarks on BGG sequences associated to Bochner--K\"ahler
  metrics}\label{3.4}

PACS--structures of K\"ahler type correspond to the real forms
$\frak{su}(p+1,q+1)$ of $\frak{sl}(p+q+2,\Bbb C)$. Such a structure
corresponds to a conformally symplectic structure
$\ell\subset\La^2T^*M$ and an almost complex structure $J$ on $M$, for
which $\ell$ is Hermitian. Torsion--freeness of the structure is
equivalent to $J$ being a complex structure, which then gives rise to
a pseudo--K\"ahler metric of signature $(p,q)$ on $M$. Parabolic
contactifications of PCS--structures of this type are partially
integrable almost CR structures of the appropriate signature, see
Section 4.2.4 of \cite{book}. Torsion--freeness is equivalent to the
structure being integrable and hence a CR structure. By Theorem 7
of \cite{PCS2}, such a parabolic contactification is locally flat if
and only if it is torsion free and the corresponding metric is
Bochner--K\"ahler (of any signature).

As the description suggests, there are strong similarities to
para--K\"ahler type as discussed in Section \ref{3.3} above. In view
of this similarities, we will only briefly outline the differences to
the para--K\"ahler case. 

For $\frak g=\frak{su}(p+1,q+1)$, the subalgebra $\frak g_0$ has
center $\Bbb C$ and semi--simple part $\frak{su}(p,q)$. Up to twisting
by natural line bundles, the tangent bundle $TM$ corresponds to the
standard representation $\Bbb C^{p+q}$ of $\frak g_0$. The analogy to
the para--K\"ahler case becomes clear after complexification, where we
get $TM\otimes\Bbb C=T^{1,0}M\oplus T^{0,1}M$ and the two summands
correspond to dual representations of $\frak{su}(p,q)$. This shows
that, after complexification, the situation is parallel to the
para--K\"ahler case, with complex linearity and anti--linearity
properties replacing the decomposition into $E$ and $F$.

BGG sequences on partially integrable almost CR structures are
associated both to real and to complex representations of the group
$G:=PSU(p+1,q+1)$ which governs the geometry. A complex representation
of $\frak g$ is again determined by its restriction to $\frak g_0$
(which also is a complex representation) and two non--negative
integers describing the action of the center. However, in this case
the center of $SU(p+1,q+1)$ is isomorphic to $\Bbb Z_{p+q+2}$, so the
condition that a representation integrates to $G$ imposes more
restrictive conditions on the two integers describing the action of
the center. Correspondingly, there are less BGG sequences available
then in the para--K\"ahler case, unless it is possible to choose an
additional structure as discussed in Remark \ref{rem3.2}.

Once a complex representation $\Bbb W$ of $\frak g_0$ and two
non--negative integers $k$ and $\ell$ give rise to a complex
representation $\Bbb V$ of $G$, the situation becomes very similar to
Theorem \ref{thm3.3}, compare with Sections 3.6 and 3.8 of
\cite{subcomplexes}. There is a sequence $D_i:\Ga(W_i)\to\Ga(W_{i+1})$
differential operators for $i=0,\dots,2n$, which is a complex provided
that one starts with a Bochner--K\"ahler metric of any signature. The
bundles $W_0$ and $W_{2n+1}$ are irreducible, whereas $W_i$ and
$W_{2n+1-i}$ split into a direct sum of $i+1$ bundles associated to
complex irreducible representations for $i=1,\dots,n$. One may also
describe $W_1$ and the principal parts of the two components of $D_0$
similarly to Theorem \ref{thm3.3}, with complex linearity and conjugate
linearity replacing the appearance of copies of $E$ and $F$.

There are also BGG sequences of partially integrable almost CR
structures induced by real representations of $G$ (which do not admit
a $G$--invariant complex structure). Again, such a representation is
determined by its restriction to $\frak g_0^0$, which is a real
irreducible representation, and by the action of the center. Since
there is no complex structure available, there are stronger
restrictions for the action of the center than in the complex case
here. Next, one needs to make sure that the resulting representation
$\Bbb V$ of $\frak g$ integrates to $G$. Having given a real
representation of $G$, there again is a BGG sequence of the same
length as in the complex case, see again Section 3.8 of
\cite{subcomplexes}. The main difference to the complex case is the
number of irreducible components of the bundles $W_i$ and
$W_{2n+1-i}$, which now is $(i+1)/2$ for odd $i$ and $i/2+1$ for even
$i$. In particular, in this case $W_0$ and $W_1$ both are irreducible
and the principal part of $D_0$ is just given by a symmetrized iterated
covariant derivative followed by a projection to the Cartan product. 

\subsection{Relative BGG sequences}\label{3.5}
We now turn to a second general construction of sequences and
complexes of invariant differential operators on parabolic geometries,
which was introduced in the recent article \cite{Rel-BGG2}. To
simplify comparison to that reference, we briefly change notation and
denote $Q\subset G$ the parabolic subgroup corresponding to a contact
grading. The construction of relative BGG sequences on geometries of
type $(G,Q)$ in addition needs a second parabolic subgroup $P$ such
that $G\supset P\supset Q$. Hence in the case of parabolic contact
structures, this construction is only available for the $A_n$--series,
since for the other series the parabolic subalgebra $\frak
q\subset\frak g$ corresponding to the contact grading is maximal. Even
in the $A_n$--case, an intermediate parabolic is only available for
the real form $\frak{sl}(n+1,\Bbb R)$, for the real forms
$\frak{su}(p+1,q+1)$ the parabolic $\frak q$ corresponding to the
contact grading again is maximal. Hence relative BGG sequences can
only be used to construct invariant differential operators on
PCS--structures of para--K\"ahler type. However, in this case the
resulting sequences are highly interesting since they give rise to
differential complexes under much weaker assumptions than coming from
a Bochner--bi--Lagrangean metric.

Given a type $(G,Q)$ and an intermediate parabolic $P$, the input
needed to construct a relative BGG sequence is a finite dimensional,
completely reducible representation $\Bbb V$ of the group
$P$. Complete reducibility means that the nilpotent subgroup
$P_+\subset P$ acts trivially, so $\Bbb V$ is a representation of the
reductive Levi--factor $P_0\cong P/P_+$. The bundles showing up in the
relative BGG sequence determined by $\Bbb V$ are induced by certain
Lie algebra homology groups which we describe next. The setup easily
implies that $P_+\subset Q_+$ and that $\frak p_+\subset\frak q_+$ is
an ideal. Hence $\frak q_+/\frak p_+$ naturally is a Lie algebra,
which acts on $\Bbb V$ since the restriction to $\frak q_+$ of the
derivative of the $P$--action descends to the quotient. The homology
groups in question then are the groups $H_*(\frak q_+/\frak p_+,\Bbb
V)$, which can be computed algorithmically using a relative version of
Kostant's theorem, see \cite{Rel-BGG1}. As before, we will simply
state the resulting descriptions of bundles in the sequence.

There are general results showing that relative BGG sequences are
complexes under much weaker assumptions than local flatness. The
relevant concept here is called relative curvature. Given a parabolic
geometry $p:\Cal G\to M^\#$ of type $(G,Q)$ and an intermediate
parabolic $P$, the $Q$--invariant subspace $\frak p/\frak
q\subset\frak g/\frak q$ gives rise to a subbundle $T_\rho M^\#\subset
TM$ called the \textit{relative tangent bundle}. Likewise, the
$Q$--invariant subspaces $\frak p_+\subset \frak p\subset\frak g$
corresponds to subbundles $\Cal A_{\frak p_+}M^\#\subset\Cal A_{\frak
  p}M^\#$ in the adjoint tractor bundle $\Cal AM^\#$. One defines the
relative adjoint tractor bundle $\Cal A_{\rho}M^\#$ of the geometry to
be $\Cal G\x_Q(\frak p/\frak p_+)\cong \Cal A_{\frak p}M^\#/\Cal
A_{\frak p_+}M^\#$.

Now the first condition one has to impose is that $T_\rho M^\#\subset
TM^\#$ is an involutive distribution. This is easily seen to be
equivalent to the curvature $\ka\in\Om^2(M,\Cal AM^\#)$ having the
property that its values on two tangent vectors from $T_\rho
M^\#\subset TM^\#$ always lie in $\Cal A_{\frak p}M^\#$. Assuming
this, the values can be projected to the relative adjoint tractor
bundle, thus defining a section $\ka_{\rho}$ of the bundle
$\La^2T_\rho^*M^\#\otimes\Cal A_\rho M^\#$. This is the relative
curvature of the geometry, and if this vanishes identically, any
relative BGG sequence on $M^\#$ is a complex and a fine resolution of
a certain sheaf on $M^\#$, which locally descends to leaf spaces of
the distribution $T_\rho M^\#$.

\subsection{Relative BGG complexes associated to para--K\"ahler
  metrics}\label{3.6} 

In the case of PCS--structures of para--K\"ahler type in dimension
$2n$, the group $G$ is $PGL(n+2,\Bbb R)$ and the parabolic subgroup
$Q\subset G$ comes from the stabilizer of a flag consisting of a line
contained in a hyperplane in the standard representation. Hence
$Q=P\cap \tilde P$, where $P$ comes from the stabilizer of the line
and $\tilde P$ comes from the stabilizer of the hyperplane. Since $P$
and $\tilde P$ are maximal parabolic subgroups in $G$, they are the
only two possible choices of intermediate parabolic subgroups in this
case. A parabolic geometry $p:\Cal G\to M^\#$ of type $(G,Q)$ is given
by a contact structure $H\subset TM^\#$ and a decomposition
$H=E^\#\oplus F^\#$ into a direct sum of Legendrean subbundles. It is
easy to see that the relative tangent bundles corresponding to $P$ and
$\tilde P$ are just the subbundle $E^\#$ and $F^\#$, respectively. In
particular, the situation between $P$ and $\tilde P$ is completely
symmetric, so it suffices to discuss one of the two cases.

On the level of PCS--structures, we have a smooth manifold $M$ of
dimension $2n$, a conformally symplectic structure
$\ell\subset\La^2T^*M$ and a decomposition $TM=E\oplus F$ into
subbundles which are Lagrangean for $\ell$. As discussed in Section
3.2 of \cite{PCS1}, this gives rise to a split--signature conformal
structure on $M$, by extending the pairing between $E$ and $F$ induced
by a local section of $\ell$ to a symmetric tensor field, for which
the two subbundles are isotropic. In the PCS--case, local closed
sections of $\ell$ are uniquely determined up to constant multiples,
so we even get local split signature metrics which are unique up to a
constant factor (and hence all have the same Levi--Civita connection).
In Section 4.5 of \cite{PCS1}, it is shown that torsion freeness of
the PCS--structure defined by $\ell$, $E$ and $F$ is equivalent to the
fact that the subbundles $E$ and $F$ are both involutive. Since the
canonical connection of the PCS--structure preserves the distinguished
metrics by construction, torsion--freeness shows that it has to
coincide with the Levi--Civita connection in this case.

Here we can work in a slightly more general situation, namely that the
one of the subbundles, say $F$, is involutive. Assuming that $E$ is
non--involutive, the canonical connection $\nabla$ of the
PCS--structure has non--trivial torsion (since is preserves $E$). More
precisely, identifying $\La^2T^*M$ with $\La^2E^*\oplus(E^*\otimes
F^*)\oplus\La^2F^*$ the restriction of the torsion to the last two
summands has to be trivial, whereas the restriction to the first
summand coincides with the negative of the tensorial map $\La^2E^*\to
F$ induced by projecting the Lie bracket to $F$. In particular,
$\nabla$ has to be different from the Levi--Civita connection of the
distinguished metrics. However, since $\nabla$ by construction
preserves the distinguished metrics (since it preserves $E$, $F$ and
$\ell$), and its torsion is known, there is an explicit formula
relating it to the Levi--Civita connection.

To formulate the result on complexes induced by relative BGG complexes,
we need a bit more information on the groups involved. Recall that
$G=PGL(n+2,\Bbb R)$, $P\subset G$ comes from the stabilizer of a line
in the standard representation, while $Q\subset P$ comes from the
stabilizer of the flag consisting of that line and a hyperplane
containing it. Via the restriction of the adjoint action of $G$, $P$
acts on on $\frak g/\frak p$ and it is well known that this induces an
isomorphism $P/P_+\cong GL(\frak g/\frak p)\cong GL(n+1,\Bbb R)$,
compare with Section 4.1.5 of \cite{book}. Moreover, the sum of all
but the lowest grading components of $\frak g$ with respect to the
grading defined by $\frak q$ is a codimension--one subspace $\frak
q^{-1}\subset\frak g$ containing $\frak p$. Hence $\frak q^{-1}/\frak
p\subset\frak g/\frak p$ is a hyperplane and $Q\subset P$ can be
characterized as those elements whose action on $\frak g/\frak p$
stabilizes this hyperplane, see Section 4.4.2 of \cite{book}. This
gives rise to a surjection $Q\to GL(\frak q^{-1}/\frak p)\cong
GL(n,\Bbb R)$ which has $Q_+$ in its kernel. For a parabolic contact
structure $(M^\#,H=E^\#\oplus F^\#)$ of type $(G,Q)$, the vector
bundle induced by this representation is $E^\#$. Viewing the above
homomorphism as $Q/Q_+\to GL(\frak q^{-1}/\frak p)$ its kernel is
isomorphic to $\Bbb R\setminus\{0\}$. A faithful representation of
this kernel corresponds to $\La^nF^\#$ on each parabolic contact
structure.

\begin{thm}\label{thm3.6}
  Let $\Bbb W$ be an irreducible representation of $GL(n,\Bbb R)$ and
  let $W$ be the corresponding natural tensor bundle on a
  PCS--manifold $(M,\ell,E,F)$ of para--K\"ahler type of dimension
  $2n$ (with $E$ playing the role of the standard representation) and
  let $k\geq 0$ be an integer.

  Then pushing down an appropriate relative BGG sequence on parabolic
  contactifications leads to a sequence of irreducible tensor bundles
  and invariant differential operators of the form
$$
\Ga(W_0)\overset{D_0}{\longrightarrow} \Ga(W_1)
\overset{D_1}{\longrightarrow} \cdots
\overset{D_{n-2}}{\longrightarrow} \Ga(W_{n-1})
\overset{D_{n-1}}{\longrightarrow} \Ga(W_n).
$$ 
This sequence is a complex, if the subbundle $F\subset TM$ is
involutive. The bundle $W_0$ is the tensor product of $W$ with an real
power of the line bundle $(\La^nF)^2$ and $W_1\cong S^k F^*\odot W_0$.

Finally suppose that $\Bbb W$ is chosen in such a way that $W_0$
coincides with one of the irreducible summands in the bundles from
Theorem \ref{thm3.3}. Then the same holds for all the bundles $W_j$ and
the sequence constructed here is a subsequence respectively a
subcomplex in the sequence from that Theorem.
\end{thm}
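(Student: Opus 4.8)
The plan is to prove the statement upstairs, on the parabolic contactification, and then to transport it to the PCS--quotient by the functoriality of the push--down from Section~\ref{2}. Throughout I use the notation of Section~\ref{3.5}: $\frak q$ is the contact parabolic and $\frak p\supset\frak q$ the intermediate one, so that $\frak p_+\subset\frak q_+$ is an ideal. Write $\Bbb V$ for the $G$--representation underlying the sequence of Theorem~\ref{thm3.3}, whose bundles are induced by the homology groups $H_*(\frak q_+,\Bbb V)$. The first task is to read off $\Bbb V$ from the relative data. The hypothesis that $W_0$ coincides with an irreducible summand of one of the bundles in Theorem~\ref{thm3.3} translates, via Kostant's theorem, into the assertion that the completely reducible $P$--representation inducing the relative sequence occurs as an irreducible component of the $\frak p_+$--homology $H_*(\frak p_+,\Bbb V)$. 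Concretely I would match negatives of lowest weights: the weight of $W_0$, namely the irreducible $GL(n,\Bbb R)$--representation $\Bbb W$ twisted by a real power of $(\La^nF)^2$, together with the integer $k$, determines through the relative Kostant theorem of \cite{Rel-BGG1} a unique candidate weight for $\Bbb V$. The content of the hypothesis is precisely that this weight satisfies the integrality and central condition needed for $\Bbb V$ to integrate to $G=PGL(n+2,\Bbb R)$ (compare with the analysis of the centre in the proof of Theorem~\ref{thm3.3}); this is what genuinely restricts the admissible $\Bbb W$.

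The heart of the argument is the relation between the relative and the absolute BGG machinery for the single $G$--representation $\Bbb V$, which is supplied by \cite{Rel-BGG2}. Since $\frak p_+\triangleleft\frak q_+$, the Hochschild--Serre spectral sequence computes $H_*(\frak q_+,\Bbb V)$ from $H_p(\frak q_+/\frak p_+,H_q(\frak p_+,\Bbb V))$. By Kostant's theorem all these homology representations are completely reducible, and the factorization $W^{\frak q}=W^{\frak p}\cdot W^{\frak p}_{\frak q}$ of the Hasse diagram of $\frak q$ into that of $\frak p$ and the relative one, with lengths adding up, forces the spectral sequence to degenerate. Hence, grading by the $\frak p_+$--homology degree $q$, the bundle in absolute degree $m$ of Theorem~\ref{thm3.3} decomposes as the sum over $p+q=m$ of the bundles $H_p(\frak q_+/\frak p_+,H_q(\frak p_+,\Bbb V))$, and for fixed $q$ these are exactly the bundles of the relative BGG sequence with input $H_q(\frak p_+,\Bbb V)$, placed in absolute degree $q+p$. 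This already yields the claim that every $W_j$ of the relative sequence is an irreducible summand of the corresponding bundle of Theorem~\ref{thm3.3}. That the relative operators are honest restrictions of the absolute ones, rather than merely isomorphic to components of them, is the filtration statement of \cite{Rel-BGG2}: the absolute differential respects the filtration by $\frak p_+$--homology degree, and the induced differential on the associated graded is the direct sum of the relative differentials.

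It remains to see that the component in question produces a \emph{sub}complex rather than a quotient complex, i.e.\ that the one--sided filtration points in the favourable direction. Here I would use the explicit form of $W_0$: being $\Bbb W$ twisted only by a density, it is induced by the $\frak p_+$--\emph{invariants} in $\Bbb V$ up to the twist $\La^{\mathrm{top}}\frak p_+$, hence by the \emph{top} homology $H_{\mathrm{top}}(\frak p_+,\Bbb V)$, which is realized as a $P$--submodule and not as a quotient. Correspondingly the associated--graded piece for this $q$ sits at the bottom of the filtration, so the absolute differential genuinely restricts to it; equivalently, the inclusion of the corresponding relative tractor bundle into the full tractor bundle induces a chain map exhibiting the relative sequence as a subsequence. (For the component $H_0(\frak p_+,\Bbb V)$, which is a quotient of $\Bbb V$, one obtains instead a quotient complex.) Thus, upstairs on the Lagrangean contact structure, the relative BGG sequence is a subsequence of the absolute one, and a subcomplex whenever the latter is a complex. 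Finally I would descend: by Theorems~\ref{thm2.4} and~\ref{thm2.5} both sequences push down compatibly to the PCS--quotient, and since $q_0^*$ is injective and intertwines each upstairs operator with its descended counterpart (Lemma~\ref{lem2.4}), the subbundle inclusions and the restriction of the operators pass verbatim to the PCS--structure. The main obstacle is twofold: the weight bookkeeping of the first step, and, more delicately, confirming from \cite{Rel-BGG2} that this component lies at the sub--end and not the quotient--end of the filtration; once the upstairs subcomplex is established, the descent is formal.
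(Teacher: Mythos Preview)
Your proposal focuses almost entirely on the ``finally'' clause and skips the core geometric content of the theorem. The statement asserts that the descended sequence is a complex whenever the subbundle $F\subset TM$ is involutive, and you do not address this at all. The paper's argument here is not formal: one must show that involutivity of $F$ forces the relative curvature of the contactified Lagrangean contact structure to vanish. Concretely, involutivity of $F$ lifts to involutivity of $F^\#\subset TM^\#$, which is the relative tangent bundle for the chosen intermediate parabolic $P$; this kills one of the three harmonic curvature components (Proposition 4.2.3 of \cite{book}), and then Proposition 4.18 of \cite{Rel-BGG2} gives vanishing of the relative curvature, whence the relative BGG sequence is a complex. Without this step you have not proved the main assertion. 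You also omit the elementary structural claims (that each $W_i$ is irreducible because the relative Hasse diagram $W^{\frak q}_{\frak p}$ has exactly one element of each length $0,\dots,n$, and the identification of $W_1$ via the simple reflection $s_{\al_{n+1}}$).

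For the subcomplex statement, your filtration argument contains a genuine error. You claim that ``being $\Bbb W$ twisted only by a density'' forces $W_0$ to come from the \emph{top} $\frak p_+$--homology $H_{\mathrm{top}}(\frak p_+,\Bbb V)$. This is a confusion between the $P$--representation $\Bbb V_P$ inducing the relative sequence and the $G$--representation $\Bbb V$. The density description of $W_0$ comes from $W_0=H_0(\frak q_+/\frak p_+,\Bbb V_P)$ and says nothing about which degree $j$ realizes $\Bbb V_P\cong H_j(\frak p_+,\Bbb V)$; indeed the hypothesis of the ``finally'' clause allows $j$ to be arbitrary. So your sub--versus--quotient dichotomy does not go through. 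The paper avoids this issue entirely: rather than a filtration argument, it invokes Theorem 5.2 of \cite{Rel-BGG2}, which asserts directly that the relative and absolute BGG constructions produce \emph{the same} differential operators between the common bundles. That is a stronger and cleaner statement than what a spectral--sequence degeneration would give you, and it makes the subcomplex claim immediate with no orientation of the filtration to check.
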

\begin{proof}
  Consider $\frak g=\frak{sl}(n+2,\Bbb R)$ with simple roots $\al_i$
  and corresponding fundamental weights $\om_i$ as in the proof of
  Theorem \ref{thm3.3}. The corresponding Cartan subalgebra $\frak
  h\subset\frak g$ also is a Cartan subalgebra for the reductive
  subalgebras $\frak p\cong \frak{gl}(n+1,\Bbb R)$ and $\frak
  q_0\subset\frak p$. Let us decompose $\frak q_0\cong\frak{gl}(n,\Bbb
  R)\oplus\Bbb R$ as described on the group level before the
  theorem. Then the negative of the lowest weight of the
  representation $\Bbb W$ can be expressed as a linear combination
  $a_1\om_1+\dots +a_n\om_n$ with $a_1\in\Bbb R$ and non--negative
  integers $a_2,\dots,a_n$. Adding $k\om_{n+1}$ to this, we obtain a
  weight which is the negative of the lowest weight of a finite
  dimensional, irreducible representation $\Bbb V$ of $\frak p$. (The
  part $a_2\om_2+\dots+a_n\om_n+k\om_{n+1}$ is a dominant integral
  weight for the semisimple part of $\frak p$, and adding $a_1\om_1$
  corresponds to tensorizing by a one--dimensional representation of
  the center.) There is no problem with the representation integrating
  to the group $P\cong GL(n+1,\Bbb R)$.

Hence the general results of \cite{Rel-BGG2} imply the existence of an
associated relative BGG--sequence for each parabolic geometry of type
$(G,Q)$. Via the mechanism introduced in Section \ref{2}, this
sequence can be pushed down a manifold endowed with a PCS--structure
from local contactifications. The bundles showing up in the resulting
sequence are induced by the Lie algebra homology groups $H_k(\frak
q_+/\frak p_+,\Bbb V)$, in particular the degrees range from $0$ to
$\dim(\frak q_+)-\dim(\frak p_+)=n$. To obtain the shape of the
sequence, one has to determine the relative Hasse diagram $W^{\frak
  q}_{\frak p}$ as described in Lemma 2.6 and Example 3.2 of
\cite{Rel-BGG1}. It is easy to see $W^{\frak q}_{\frak p}$ consists of
$n+1$ elements of length $0,\dots,n$. This implies the statement on
irreducibility of $W_i$ for each $i$. Moreover, $H_0(\frak
q_+/\frak p_+,\Bbb V)$ is the $Q$--irreducible quotient of $\Bbb V$,
which implies the description of $W_0$. The unique element of length
$1$ in $W^{\frak p}_{\frak q}$ is the simple reflection corresponding
to $\al_{n+1}$, from which the description of $W_1$ follows as in the
proof of Theorem \ref{thm3.3}. 

Suppose next, that the subbundle $F\subset TM$ is involutive. Then for
each local parabolic contactification $(M^\#,H=E^\#\oplus F^\#)$, the
subbundle $F^\#\subset TM^\#$ is involutive, too. As we have observed
above, this is exactly the relative tangent bundle $T_\rho M^\#$ for
the intermediate parabolic $P$. By Proposition 4.2.3 of \cite{book}
this implies that one of the three harmonic curvature components of
the parabolic geometry on $M^\#$ vanishes identically. But the
discussion of harmonic curvature in Section 4.2.3 of \cite{book} shows
that the assumptions of part (1) of Proposition 4.18 of
\cite{Rel-BGG2} are satisfied, so the relative curvature of the
geometry vanishes. By part (1) of Theorem 4.11 of that reference, any
relative BGG sequence on $M^\#$ is a complex, so the descended
sequence is a complex, too. 

To prove that last claim, we observe that by Kostant's theorem, all
$\frak p$--dominant weights in the affine Weyl orbit of the negative
of the lowest weight of $\Bbb V$ are realized by irreducible
components of the representations $H_j(\frak p_+,\Bbb V)$ for
$j=0,\dots,\dim(\frak p_+)$. Hence our assumptions mean that $W_0$
occurs as an irreducible component in one of these homology
representations (which happen to be irreducible in our case). But by
Theorem \ref{thm3.3} of \cite{Rel-BGG1}, the homologies $H_i(\frak
q_+/\frak p_+,H_j(\frak p_+,\Bbb V))$ are contained in $H_{i+j}(\frak
q_+,\Bbb V)$, so all bundles $W_i$ occur in the sequence from Theorem
\ref{thm3.3}. It is proved in Theorem 5.2 of \cite{Rel-BGG2} that then
the absolute and the relative BGG constructions produce the same
differential operators between these bundles, which implies the last
claim. 
\end{proof}

\begin{remark}\label{rem3.6}
  As already remarked above, relative BGG sequences are not available
  for the parabolic contact geometries associated to $\frak
  g=\frak{su}(p+1,q+1)$. However, there is a case in which our methods
  can produce differential complexes on general K\"ahler manifolds
  (i.e.~without the assumption on vanishing Bochner curvature). This
  is related to those cases in Theorem \ref{thm3.6} in which a relative
  BGG sequence is included in a proper BGG sequence. In these cases,
  the existence of subcomplexes can also be proved more directly, see
  \cite{subcomplexes}. While these techniques require slightly
  stronger assumption (involutivity of both $E$ and $F$), they also
  work for the other real forms. 

  In the setting of Section \ref{3.4} this method applies to
  torsion--free PCS--structures of K\"ahler type, which are equivalent
  to pseudo--K\"ahler metrics of any signature, see Proposition 4.5 of
  \cite{PCS1}. The local contactifications of such a geometry carry an
  (integrable) CR structure of hypersurface type of the same
  signature. As discussed in Section \ref{3.4} there are BGG sequences
  on such structures associated to real and complex representations of
  the groups $PSU(p+1,q+1)$. Theorem 3.8 in \cite{subcomplexes} shows
  that, both in the real and in the complex case, there are several
  subcomplexes in such a BGG sequences, which descend to differential
  complexes on the underlying pseudo--K\"ahler manifolds.
\end{remark}

\section{The cohomology of descended BGG complexes}\label{4} 
In this last section, we will derive some results on the cohomology of
the differential complexes associated to special symplectic
connections via descending BGG sequences. The strongest results are
obtained in the case of global contactifications with compact fibers,
thus in particular applying to complexes on $\Bbb CP^n$ and
$Gr(2,\Bbb C^n)$ as treated in Sections 2.6 and 3.4 of
\cite{PCS2}. Several steps towards these main results are proved in a
more general setting.

\subsection{The relation to twisted de--Rham cohomology}\label{4.1} 
For the first step in the description, we need a few details on the
construction of BGG sequences. As discussed in Section \ref{3.1}, a
BGG sequence on parabolic contact structures of type $(G,P)$ is
determined by a representation $\Bbb V$ of the Lie group $G$. Via
taking the associated bundle to the Cartan bundle determined by $\Bbb
V$, this representation gives rise to a natural vector bundle on such
geometries. On the homogeneous model $G/P$, this is just the
homogeneous vector bundle $G\x_P\Bbb V$. Bundles of this type are
called \textit{tractor bundles}. Their main feature is that the
Cartan connection induces a canonical linear connection on each
tractor bundle, which is flat if and only if either $\Bbb V$ is a
trivial representation or the geometry is locally flat. Since the case
of the trivial representation is treated in \cite{Cap-Salac}, we will
always assume that $\Bbb V$ is a non--trivial, irreducible
representation from now on.

Given a parabolic contact geometry $(p:\Cal G^\#\to M^\#,\om)$, let us
denote by $\Cal VM^\#$ the tractor bundle on $M^\#$ induced by $\Bbb
V$ and by $\nabla^{\Cal V}$ the canonical tractor connection induced
by $\om$. Coupling $\nabla^{\Cal V}$ to the exterior derivative, one
obtains the \textit{covariant exterior derivative}
$d^\nabla:\Om^k(M^\#,\Cal VM^\#)\to\Om^{k+1}(M^\#,\Cal VM^\#)$ for
each $k$. Obviously, this is a sequence of invariant differential
operators and it is well known that they form a complex if and only if
the connection $\nabla^{\Cal V}$ is flat. We will refer to this as the
twisted de--Rham sequence respectively the twisted de--Rham complex
determined by $\Bbb V$.

Now it turns out that, via the Cartan connection $\om$, the cotangent
bundle $T^*M^\#$ can be naturally identified with the associated
bundle $\Cal G^\#\x_P\frak p_+$. Hence the bundle of $\Cal
VM^\#$--valued $k$--forms is induced by the representation $\La^k\frak
p_+\otimes\Bbb V$, which is the space of $k$--chains in the standard
complex computing the Lie algebra homology $H_*(\frak p_+,\Bbb
V)$. Since the standard differentials in this complex are
$P$--equivariant, they induce natural bundle maps
$\La^kT^*M^\#\otimes\Cal VM^\#\to \La^{k-1}T^*M^\#\otimes\Cal VM^\#$,
which traditionally are denoted by $\partial^*$. Hence
$\Im(\partial^*)$ and $\ker(\partial^*)$ are nested natural subbundles
in $\La^kT^*M\otimes\Cal VM$ and the quotient
$\ker(\partial^*)/\Im(\partial^*)$ is by construction isomorphic to
the associated bundle $\Cal G\x_PH_k(\frak p_+,\Bbb V)$ which we
denote by $\Cal H^{\Cal V}_kM^\#$.

By construction, there is a bundle projection
$\Pi=\Pi_k:\ker(\partial^*)\to \Cal H^{\Cal V}_kM^\#$ which induces a
tensorial operator on the spaces of sections of these bundles that we
denote by the same symbol. Now the key to the construction of BGG
sequences is that there is an invariant differential operator $S=S_k$
which splits this tensorial projection. Otherwise put, to any section
$\si\in\Ga(\Cal H^{\Cal V}_kM^\#)$ we can associate
$S(\si)\in\Om^k(M^\#,\Cal VM^\#)$ such that $\partial^*\o S(\si)=0$
and $\Pi(S(\si))=\si$. Moreover it turns out that this splitting
operator has the property that $\partial^*\o d^\nabla(S(\si))=0$ for
any $\si$ (which uniquely determines $S$). Hence one can define an
invariant differential operator $D^\#=D_k^\#:\Ga(\Cal H^{\Cal
  V}_kM^\#)\to\Ga(\Cal H^{\Cal V}_{k+1}M^\#)$ by
$D^\#(\si):=\Pi(d^\nabla(S(\si)))$, and these operators form the BGG
sequence. Moreover, if $\nabla^{\Cal V}$ is flat, then the splitting
operators have the property that $d^\nabla\o S_k=S_{k+1}\o D^\#_k$ for
all $k$. This readily implies that the BGG sequence is a complex, and
the $S_k$ define a homomorphism of complexes from the BGG complex to
the twisted de--Rham complex.

Now suppose that we have given a PCS--quotient $q:M^\#\to M$ of type
$(G,P)$, and let $\pi:\Cal G_0\to M$ be the corresponding
$G_0$--principal bundle. Then by construction, the operators $D$
obtained by descending the operators $D^\#$ act on sections of the
bundle $\Cal H^{\Cal V}_kM:=\Cal G_0\x_{G_0}H_k(\frak p_+,\Bbb V)$ for
all $k$. (Here one uses that each $H_k(\frak p_+,\Bbb V)$ is a
completely reducible representation of $P$, so it descends to
$P/P_+\cong G_0$.) Recall that the infinitesimal automorphism
$\xi\in\frak X(M^\#)$ giving rise to the PCS--quotient induces a
$P$--invariant vector field $\tilde\xi\in\frak X(\Cal G^\#)$ which
projects onto $\xi$. Similarly as discussed in Section \ref{2.4},
sections of associated bundles to $\Cal G^\#$ can be identified with
smooth functions with values in the inducing representation, so there
is a natural action of $\tilde\xi$ via a Lie derivative $\Cal
L_{\tilde\xi}$. In particular, we define $\Om^k_\xi (M^\#,\Cal VM^\#)$
as the subspace of those forms $\ph$, for which $\Cal
L_{\tilde\xi}(\ph)=0$. This works in the same way on open subsets of
$M^\#$ so that we have actually defined a subsheaf of the sheaf of
$\Cal VM^\#$--valued $k$--forms.

\begin{thm}\label{thm4.1}
Consider a PCS--quotient $q:M^\#\to M$ of type $(G,P)$. Let $\Bbb V$
be a representation of $G$, $\Cal VM^\#\to M^\#$ the tractor bundle
determined by $\Bbb V$ and $(\Om^*(M^\#,\Cal VM^*),d^\nabla)$ the
induced twisted de--Rham sequence. Let $(\Cal H^{\Cal V}_*M,D_*)$ be
the sequence of differential operators on $M$ obtained by descending
the BGG sequence determined by $\Bbb V$ as in Theorem \ref{thm2.4}.

Then $d^\nabla$ commutes with $\Cal L_{\tilde\xi}$, and hence it
preserves the subspaces $\Om^*_\xi(M^\#,\Cal VM^\#)$. Moreover, if
$\nabla$ is flat, then these subspaces form a subcomplex in the twisted
de--Rham complex, whose cohomology is naturally isomorphic to the
cohomology of the complex $(\Cal H^{\Cal V}_*M,D_*)$.
\end{thm}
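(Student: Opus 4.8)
The plan is to treat the three assertions in turn, disposing of the first two almost immediately and reducing the cohomology statement to the equivariance of the standard BGG homotopy equivalence. First I would note that $d^\nabla$ is an invariant differential operator for the parabolic contact structure: it is the covariant exterior derivative associated to the canonical tractor connection $\nabla^{\Cal V}$, which is itself induced by the Cartan connection $\om$. By the principle recorded in Section \ref{2.2}, any invariant differential operator commutes with $\Cal L_{\tilde\xi}$ for an infinitesimal automorphism $\tilde\xi$, so $d^\nabla\o\Cal L_{\tilde\xi}=\Cal L_{\tilde\xi}\o d^\nabla$. This is the first claim, and it immediately yields the second: if $\Cal L_{\tilde\xi}(\ph)=0$ then $\Cal L_{\tilde\xi}(d^\nabla\ph)=d^\nabla(\Cal L_{\tilde\xi}\ph)=0$, so $d^\nabla$ maps $\Om^k_\xi(M^\#,\Cal VM^\#)$ into $\Om^{k+1}_\xi(M^\#,\Cal VM^\#)$. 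When $\nabla^{\Cal V}$ is flat we have $(d^\nabla)^2=0$, and together with the last observation this shows that $(\Om^*_\xi(M^\#,\Cal VM^\#),d^\nabla)$ is a subcomplex of the twisted de--Rham complex.

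For the cohomology isomorphism I would first rephrase the descended complex upstairs. Since each homology representation $H_k(\frak p_+,\Bbb V)$ is completely reducible, the action of $\Cal L_{\tilde\xi}$ on $\Ga(\Cal H^{\Cal V}_kM^\#)$ agrees with that of $\Cal L_{\xi_0}$, and Lemma \ref{lem2.4} identifies its kernel with the image of $q_0^*$, hence with $\Ga(\Cal H^{\Cal V}_kM)$. By the defining property of the descended operators in Theorem \ref{thm2.4}, $q_0^*$ intertwines $D_k$ with $D^\#_k$. Thus $(\Cal H^{\Cal V}_*M,D_*)$ is isomorphic, via $q_0^*$, to the subcomplex of the upstairs BGG complex $(\Ga(\Cal H^{\Cal V}_*M^\#),D^\#_*)$ consisting of those sections killed by $\Cal L_{\tilde\xi}$.

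It then remains to compare this $\xi$--invariant BGG subcomplex with $(\Om^*_\xi,d^\nabla)$. In the flat case the splitting operators give a chain map $S_*$ satisfying $d^\nabla\o S_k=S_{k+1}\o D^\#_k$ and $\Pi_k\o S_k=\id$, and the theory of \cites{CSS-BGG,Calderbank-Diemer} shows that $S_*$ is in fact a homotopy equivalence between the full BGG complex and the full twisted de--Rham complex. The crucial point is that every operator entering this homotopy equivalence---the splitting $S_k$, the projection $\Pi_k$, the tensorial codifferential $\partial^*$, the (tensorial) inverse of the Kostant Laplacian used to assemble them, and the chain homotopy itself---is invariant, and therefore commutes with $\Cal L_{\tilde\xi}$. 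Consequently the whole homotopy equivalence restricts to the $\Cal L_{\tilde\xi}$--invariant subspaces, so $S_*$ induces a homotopy equivalence between the $\xi$--invariant BGG subcomplex of the previous paragraph and $(\Om^*_\xi,d^\nabla)$. Composing with the isomorphism furnished by $q_0^*$ yields the asserted natural isomorphism on cohomology.

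The main obstacle is precisely the verification, in the last paragraph, that the chain homotopy realizing the BGG quasi--isomorphism is genuinely built from invariant (tensorial or canonically induced) operators, and is hence $\Cal L_{\tilde\xi}$--equivariant. This equivariance is what guarantees that passing to the $\xi$--invariant subspaces preserves the \emph{homotopy equivalence}, and not merely the chain map $S_*$ on its own; without it one could only conclude that $S_*$ restricts to a chain map, which need not be a quasi--isomorphism after taking kernels of $\Cal L_{\tilde\xi}$. Establishing it requires unwinding the explicit construction of $S_k$ and of the homotopy out of $d^\nabla$, $\partial^*$ and the inverse Kostant Laplacian, and checking invariance at each step.
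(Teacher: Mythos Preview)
Your approach matches the paper's: dispatch the first two claims by invariance of $d^\nabla$, identify the descended complex with the $\xi$--invariant BGG subcomplex upstairs via Lemma~\ref{lem2.4} and Theorem~\ref{thm2.4}, and then argue that the BGG quasi--isomorphism between the BGG complex and the twisted de--Rham complex restricts to the $\Cal L_{\tilde\xi}$--kernels. The one tactical difference concerns precisely the obstacle you flag at the end: rather than verifying that the full Calderbank--Diemer chain homotopy is built from invariant operators, the paper proves surjectivity and injectivity of the map induced by $S_*$ on cohomology directly (adapting Lemma~2.7 of \cite{CSS-BGG}), and the only nontrivial ingredient needed for this is that, given a closed $\ph\in\Om^k_\xi$, the auxiliary form $\ps$ with $\ph+d^\nabla\ps\in\Ga(\ker\partial^*)$ can be produced by an \emph{invariant} differential operator applied to $\ph$---for which the paper cites Theorems~3.9 and~3.14 of \cite{Rel-BGG2}. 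Your route via the full homotopy would also work (the homotopy is indeed assembled from invariant pieces), but the paper's argument is more economical in that it isolates exactly the single invariance statement required and points to a ready reference for it.
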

\begin{proof}
Since any local flow of $\tilde\xi\in\frak X(\Cal G^\#)$ is an
automorphism of the Cartan geometry $(p:\Cal G^\#\to M^\#,\om)$, it
follows as in Lemma \ref{lem2.4} that any invariant differential operator
commutes with $\Cal L_{\tilde\xi}$. In particular, applying this to
$d^\nabla$ we readily conclude that $d^\nabla(\Om^k_\xi(M^\#,\Cal
VM^\#))\subset \Om^{k+1}_\xi(M^\#,\Cal VM^\#)$. In the case that
$\nabla$ is flat, we hence get a subcomplex in the twisted de--Rham
complex. 

We can also apply this argument to the BGG operators $D^\#$. For a
natural vector bundle $\Cal WM^\#$ let us denote by $\Ga_{\xi}(\Cal
WM^\#)\subset \Ga(\Cal WM)$ the kernel of $\Cal
L_{\tilde\xi}$. Naturality of the BGG operators then shows that we get
a subcomplex $(\Ga_\xi(\Cal H_*^{\Cal V}M^\#),D^\#_*)$ in the BGG
complex. Now since each $H_k(\frak p_+,\Bbb V)$ is a completely
reducible representation of $P$, we can equivalently describe sections
of the corresponding associated bundle via the intermediate principal
bundle $\Cal G_0^\#:=\Cal G^\#/P_+$. Recall from Section \ref{2.2}
that there is a vector field $\xi_0\in\frak X(\Cal G_0^\#)$ which lies
between $\tilde\xi$ and $\xi$. By construction, identifying $\Ga(\Cal
H_k^{\Cal V}M^\#)$ with $C^\infty (\Cal G_0,H_k(\frak p_+,\Bbb
V))^{G_0}$, the subspace $\Ga_\xi$ exactly corresponds to the kernel
of the Lie derivative $\Cal L_{\xi_0}$. Theorem \ref{thm2.4} and Lemma
\ref{lem2.4} thus imply that the complex $(\Ga_\xi(\Cal H_*^{\Cal
  V}M^\#),D^\#_*)$ is isomorphic to $(\Ga(\Cal H_*^{\Cal V}M),D_*)$.

Thus we can complete the proof by showing that $(\Ga_\xi(\Cal
H_*^{\Cal V}M^\#),D^\#_*)$ computes the same cohomology as the
subcomplex $(\Om^*_\xi(M^\#,\Cal VM^\#),d^\nabla)$ in the twisted
de--Rham complex. For this, we can adapt the usual proof for BGG
sequences from Theorem 2.6 and Lemma 2.7 of \cite{CSS-BGG}. Naturality
of the splitting operators implies that for each $k$ we get
$S(\Ga_\xi(\Cal H_k^{\Cal V}M^\#))\subset
\Ga_\xi(\ker(\partial^*))\subset \Om_\xi^k(M^\#,\Cal VM^\#)$.  In
particular, the fact that $d^\nabla\o S=S\o D^\#$ verified in Lemma
2.7 of \cite{CSS-BGG} shows that $S$ defines a complex map between the
two subcomplexes, and we claim that this induces an isomorphism in
cohomology. Suppose that $\ph\in\Om^k_\xi(M^\#,\Cal VM^\#)$ satisfies
$d^\nabla\ph=0$. Then in Lemma 2.7 of \cite{CSS-BGG} it is shown that
there is a form $\ps\in\Om^{k-1}(M^\#,\Cal VM^\#)$ such that
$\ph+d^\nabla\ps\in\Ga(\ker(\partial^*))$. As shown in Theorems 3.9
and 3.14 of \cite{Rel-BGG2}, a form with this property can be obtained
as the value of an invariant differential operator on $\ph$. Thus we may
assume that $\Cal L_{\tilde\xi}\ps=0$ and hence
$\ph+d^\nabla\ps\in\Ga_\xi(\ker(\partial^*))$. But then naturality of
the bundle map $\Pi$ shows that
$\al:=\Pi(\ph+d^\nabla\ps)\in\Ga_\xi(\Cal H_k^{\Cal V}M^\#)$. Now by
construction $d^\nabla(\ph+d^\nabla\ps)=0$ which shows that
$\ph+d^\nabla\ps=S(\al)$ and $D^\#(\al)=0$. Hence the cohomology class
of $\al$ is mapped to the cohomology class of $\ph$, so the induced
map in cohomology is surjective.

On the other hand, suppose that $\al\in\Ga_\xi(\Cal H_k^{\Cal V}M^\#)$
satisfies $D^\#(\al)=0$ and $S(\al)=d^\nabla\ps$ for some
$\ps\in\Om^{k-1}_\xi(M^\#,\Cal VM^\#)$. As in the previous step, we
may without loss of generality assume that
$\ps\in\Ga_\xi(\ker(\partial^*))$ and then project this to
$\be=\Pi(\ps)\in\Ga_\xi(\Cal H_{k-1}^{\Cal V}M^\#)$. Then
$d^\nabla\ps=S(\al)\in\Ga(\ker(\partial^*))$ shows that $\ps=S(\be)$
and hence $D^\#(\be)=\Pi\o S(\al)=\al$, which shows injectivity of the
induced map in cohomology.
\end{proof}

\subsection{Reduction to horizontal equivariant forms}\label{4.2}  
Suppose that $q:M^\#\to M$ is a PCS--quotient with corresponding
infinitesimal automorphism $\xi\in\frak X(M^\#)$ corresponding to
$\tilde\xi\in\frak X(\Cal G^\#)$. Then the forms in $\Om^*_\xi(M^\#,\Cal
VM^\#)$ as studied above, are (in an appropriate sense) equivariant
for $\tilde\xi$. Similarly to the case of ordinary forms treated in
Section 2.3 of \cite{Cap-Salac}, it is natural to next look at forms
which in addition are horizontal, since these essentially are objects
on $M$ already. Hence we define $\Om^k_\xi(M^\#,\Cal
VM^\#)_{\text{hor}}$ to be the space of those $\ph\in\Om^k(M^\#,\Cal
VM^\#)$, for which $\Cal L_{\tilde\xi}\ph=0$ and $i_\xi\ph=0$. To
simplify notation, we will write $A^k:=\Om^k_\xi(M^\#,\Cal VM^\#)$ and
$A^k_\hor:=\Om^k_\xi(M^\#,\Cal VM^\#)_{\text{hor}}$ in what follows.

Recall that the infinitesimal automorphism $\xi$ determines a unique
contact form $\al\in\Om^1(M^\#)$ for which $\xi$ is the Reeb field,
i.e.~such that $i_\xi\al=1$ and $i_\xi d\al=0$, see Proposition 2.2 of
\cite{Cap-Salac}. Observe also, that there is an obvious wedge product
$\Om^k(M^\#)\x\Om^\ell(M^\#,\Cal VM^\#)\to\Om^{k+\ell}(M^\#,\Cal
VM^\#)$. In terms of these operations, we can now derive a description
of $A^k$.

\begin{lemma}\label{lem4.2}
  Let $q:M^\#\to M$ be a PCS--quotient of type $(G,P)$ with
  corresponding infinitesimal automorphism $\xi\in\frak X(M^\#)$ and let
  $\al\in\Om^1(M^\#)$ be the contact form associated to $\xi$. Then
  the maps $\ph\mapsto (\ph-\al\wedge i_\xi\ph,i_\xi\ph)$ and
  $(\ph_1,\ph_2)\mapsto (\ph_1+\al\wedge\ph_2)$ define inverse
  isomorphisms between $A^k$ and $A^k_\hor\oplus A^{k-1}_\hor$.
\end{lemma}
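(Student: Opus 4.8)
The plan is to prove the statement by a direct computation, since once the two maps are shown to take values in the claimed spaces, checking that they are mutually inverse is short. The only structural input needed is a handful of identities governing how $\Cal L_{\tilde\xi}$, the contraction $i_\xi$, and wedging by the scalar form $\al$ interact. First I would record that $\Cal L_\xi\al=0$: by Cartan's formula $\Cal L_\xi\al=i_\xi d\al+d\,i_\xi\al$, and the Reeb conditions $i_\xi\al=1$, $i_\xi d\al=0$ quoted before the lemma make both terms vanish. Next, since the flow of $\tilde\xi$ induces a pullback $\tilde\Phi_t^*$ on $\Cal VM^\#$-valued forms covering the ordinary flow $\Phi_t$ of $\xi$ on $M^\#$ and preserves $\xi$, differentiating the relations $\tilde\Phi_t^*(\al\wedge\ps)=(\Phi_t^*\al)\wedge\tilde\Phi_t^*\ps$ and $\tilde\Phi_t^*(i_\xi\ph)=i_\xi\tilde\Phi_t^*\ph$ at $t=0$ yields the Leibniz rule $\Cal L_{\tilde\xi}(\al\wedge\ps)=\al\wedge\Cal L_{\tilde\xi}\ps$ (using $\Cal L_\xi\al=0$) and the commutation $\Cal L_{\tilde\xi}\o i_\xi=i_\xi\o\Cal L_{\tilde\xi}$.

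Granting these, I would first check that $\ph\mapsto(\ph-\al\wedge i_\xi\ph,\,i_\xi\ph)$ lands in $A^k_\hor\oplus A^{k-1}_\hor$. For the second component, $\Cal L_{\tilde\xi}(i_\xi\ph)=i_\xi\Cal L_{\tilde\xi}\ph=0$ and $i_\xi i_\xi\ph=0$. For the first, $\Cal L_{\tilde\xi}(\al\wedge i_\xi\ph)=\al\wedge i_\xi\Cal L_{\tilde\xi}\ph=0$, so the first component is $\Cal L_{\tilde\xi}$-closed, and the graded Leibniz rule $i_\xi(\al\wedge i_\xi\ph)=(i_\xi\al)\,i_\xi\ph-\al\wedge i_\xi i_\xi\ph=i_\xi\ph$ shows it is horizontal. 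Conversely, for $(\ph_1,\ph_2)\in A^k_\hor\oplus A^{k-1}_\hor$, the element $\ph_1+\al\wedge\ph_2$ satisfies $\Cal L_{\tilde\xi}(\ph_1+\al\wedge\ph_2)=\al\wedge\Cal L_{\tilde\xi}\ph_2=0$, so it lies in $A^k$.

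Finally I would verify that the two maps are inverse. The composite starting from $\ph\in A^k$ returns $(\ph-\al\wedge i_\xi\ph)+\al\wedge i_\xi\ph=\ph$. For the other composite, the key computation is $i_\xi(\ph_1+\al\wedge\ph_2)=i_\xi\ph_1+(i_\xi\al)\ph_2-\al\wedge i_\xi\ph_2=\ph_2$, using $i_\xi\ph_1=0$, $i_\xi\al=1$, and $i_\xi\ph_2=0$; feeding this into the first map then recovers $(\ph_1,\ph_2)$.

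The computation itself is routine; the one place demanding care is establishing the two Lie-derivative identities, because $\Cal L_{\tilde\xi}$ is defined upstairs through the lift $\tilde\xi$ acting on the tractor bundle $\Cal VM^\#$, whereas $i_\xi$ and $\al$ are ordinary operations on $M^\#$. The point to get right is that the flow of $\tilde\xi$ genuinely acts on $\Cal VM^\#$-valued forms compatibly with these downstairs operations, which is exactly the content of the fact (recalled in Section \ref{2.2}) that this flow covers $\Phi_t$ and preserves $\om$; once that is in hand, the Leibniz and commutation relations, and hence the whole lemma, follow immediately.
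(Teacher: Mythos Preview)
Your proof is correct and follows essentially the same line as the paper's: verify the elementary identities showing the maps are inverse and land in horizontal forms, then check compatibility with $\Cal L_{\tilde\xi}$. The only difference is stylistic: the paper establishes the needed Lie derivative identities by computing directly with the Cartan connection $\om$ (showing $\Cal L_{\tilde\xi}\eta=[\xi,\eta]$ on vector fields via the equivariant-function description, hence $\Cal L_{\tilde\xi}$ agrees with the ordinary Lie derivative on all tensor fields and in particular on $\al$), whereas you obtain the same identities by differentiating the flow relations $\tilde\Phi_t^*(\al\wedge\ps)=(\Phi_t^*\al)\wedge\tilde\Phi_t^*\ps$ and $\tilde\Phi_t^*(i_\xi\ph)=i_\xi\tilde\Phi_t^*\ph$.
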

\begin{proof}
  Since $i_\xi\o i_\xi=0$, we see that both $i_\xi\ph$ and
  $\ph-\al\wedge i_\xi\ph$ are horizontal, and then one immediately
  verifies that the two maps in the claim are inverse to each
  other. So it remains to show that the construction can be restricted
  to the kernels of $\Cal L_{\tilde\xi}$ on both sides.

  To do this, we have to derive some results on the operator $\Cal
  L_{\tilde\xi}$, which by definition is given by differentiating the
  equivariant functions corresponding to sections of natural vector
  bundles in the direction of $\tilde\xi$. Let us denote by $(p:\Cal
  G^\#\to M^\#,\om)$ the Cartan geometry describing the parabolic
  contact structure on $M^\#$. Then the isomorphism $TM^\#\cong\Cal
  G^\#\x_P(\frak g/\frak p)$ comes from the fact that for $u\in\Cal G$
  the map $\om(u):T_u\Cal G^\#\to\frak g$ descends to a linear
  isomorphism $T_{p(u)}M^\#\to\frak g/\frak p$. Otherwise put, the
  equivariant smooth function $f:\Cal G^\#\to\frak g/\frak p$
  corresponding to a vector field $\eta\in\frak X(M^\#)$ can be
  written as $\om(\tilde\eta)+\frak p$, where $\tilde\eta\in\frak
  X(\Cal G^\#)$ is a $P$--equivariant lift of $\eta$. Since $\xi$ is
  an infinitesimal automorphism, the lift $\tilde\xi\in\frak X(\Cal
  G^\#)$ satisfies $0=\Cal L_{\tilde\xi}\om$. This implies that for
  $\tilde\eta$ as above, we get
  $\tilde\xi\cdot\om(\tilde\eta)=\om([\tilde\xi,\tilde\eta])$. Since
  $[\tilde\xi,\tilde\eta]$ is a lift of $[\xi,\eta]$, we conclude that
  $\Cal L_{\tilde\xi}\eta=[\xi,\eta]$. Hence on $\frak X(M)$ the
  operator $\Cal L_{\tilde\xi}$ coincides with the usual Lie
  derivative $\Cal L_\xi$ along $\xi$, and in particular, $\Cal
  L_{\tilde\xi}\xi=0$.

  By construction, $\Cal L_{\tilde\xi}$ satisfies the usual
  compatibility conditions with tensor products and
  contractions. Using this, the result for vector fields easily
  implies that $\Cal L_{\tilde\xi}$ coincides with the usual Lie
  derivative $\Cal L_\xi$ on all tensor fields and in particular on
  (real valued) differential forms. The definition of the contact form
  $\al$ then implies that $0=\Cal L_\xi\al=\Cal
  L_{\tilde\xi}\al$. Together with naturality and $\Cal
  L_{\tilde\xi}\xi=0$, this now implies that all the maps we have used
  preserve the kernels of $\Cal L_{\tilde\xi}$.
\end{proof}

For the next step, we have to impose an additional restriction on the
infinitesimal automorphism in question. Since $\tilde\xi\in\frak
X(\Cal G^\#)$ is $P$--invariant vector field, equivariancy of the
Cartan connection $\om$ implies that $\om(\tilde\xi):\Cal G^\#\to\frak
g$ is a $P$--equivariant function. Thus it defines a smooth section of
the associated bundle $\Cal AM^\#:=\Cal G^\#\x_P\frak g$, the adjoint
tractor bundle of the parabolic geometry $(p:\Cal G^\#\to
M^\#,\om)$. Indeed, this establishes a bijection between $\Ga(\Cal
AM^\#)$ and the space of $P$--invariant vector fields on $\Cal
G^\#$. It turns out that infinitesimal automorphisms can be nicely
characterized in this picture, see \cite{deformations}. Since $\Cal
AM^\#$ is a tractor bundle, it carries the tractor connection
$\nabla^{\Cal A}$. It turns out (see Proposition 3.2 of
\cite{deformations}) that this connection can be naturally modified by
a term involving the Cartan curvature to a linear connection
$\tilde\nabla$ whose parallel sections exactly correspond to the
canonical lifts $\tilde\xi\in\frak X(\Cal G^\#)$ of infinitesimal
automorphisms $\xi\in\frak X(M^\#)$. This bijection is implemented by
an analog of the splitting operator $S$ discusses in Section \ref{4.1}
above. Moreover, it turns out that if a section of $\Cal AM^\#$ is
parallel for $\nabla^{\Cal A}$, then it is also parallel for
$\tilde\nabla$, see Corollary 3.5 of \cite{deformations}. Hence
parallel sections of $\nabla^{\Cal A}$ correspond to a subclass of
infinitesimal automorphisms.
\begin{definition}\label{def4.2}
  Let $(p:\Cal G^\#\to M^\#,\om)$ be a parabolic geometry of type
  $(G,P)$. An infinitesimal automorphism $\xi\in\frak X(M^\#)$ of the
  geometry is called \textit{normal} if and only if the induced
  $P$--invariant vector field $\tilde\xi\in\frak X(\Cal G^\#)$
  corresponds to a section of $\Cal AM^\#$ which is parallel for the
  tractor connection $\nabla^{\Cal A}$. 
\end{definition}
By Corollary 3.5 of \cite{deformations} an infinitesimal automorphism
$\xi$ is normal if and only if $\xi$ inserts trivially into the
curvature two--form of the Cartan connection $\om$. In particular, any
infinitesimal automorphism on a locally flat geometry is normal. 

\medskip

Next, we can use the infinitesimal automorphism $\xi$ and its lift
$\tilde\xi$ to define a smooth bundle map $\Xi:\Cal VM^\#\to\Cal
VM^\#$ on a tractor bundle $\Cal VM^\#\to M^\#$. To define this,
observe that $\Cal VM^\#=\Cal G^\#\x_P\Bbb V$ for a representation
$\Bbb V$ of $G$, so we have the infinitesimal representation $\frak
g\to L(\Bbb V,\Bbb V)$. This means that any point $u\in\Cal G^\#$
defines a linear isomorphism $\ps_u:\Bbb V\to \Cal V_xM^\#$, where
$x=p(u)\in M^\#$. For any $g\in P$ and $v\in\Bbb V$, we then get
$\ps_{u\cdot g}(v)=\ps_u(g\cdot v)$, so $\ps_{u\cdot
  g}=\ps_u\o\rho(g)$, where $\rho$ denotes the representation of
$G$. On the other hand, the function $\om(\tilde\xi):\Cal G^\#\to\frak
g$ satisfies $\om(\tilde\xi)(u\cdot
g)=\Ad(g^{-1})(\om(\tilde\xi)(u))$. This shows that, denoting by
$\rho'$ the infinitesimal representation, we conclude that
$$
\ps_u\o \rho'(\om(\tilde\xi)(u))\o\ps_u^{-1}=\ps_{u\cdot g}\o
\rho'(\om(\tilde\xi)(u\cdot g))\o\ps_{u\cdot g}^{-1}.
$$
Thus we get a well defined linear map $\Xi(x):\Cal V_xM^\#\to\Cal
V_xM^\#$ and hence a smooth bundle map as claimed. 

\begin{prop}\label{prop4.2}
  Let $(p:\Cal G^\#\to M^\#,\om)$ be a parabolic geometry of type
  $(G,P)$ such that $M^\#$ is connected. Let $\xi\in\frak X(M^\#)$ be
  a normal infinitesimal automorphism and let $\Xi:\Cal VM^\#\to\Cal
  VM^\#$ be the induced bundle map on a tractor bundle $\Cal VM^\#\to
  M^\#$. Then $\Xi$ has constant rank, so its kernel $\ker(\Xi)$ and
  its image $\Im(\Xi)$ are smooth subbundles of $\Cal VM^\#$, and we
  get a smooth vector bundle $\Coker(\Xi):=\Cal VM^\#/\Im(\Xi)$.
\end{prop}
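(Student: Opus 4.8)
The plan is to recognize the bundle map $\Xi$ as a parallel endomorphism field of $\Cal VM^\#$ and then to invoke the standard fact that a parallel endomorphism has locally constant rank.

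First I would reinterpret $\Xi$ invariantly. By the discussion preceding Definition \ref{def4.2}, the $P$--invariant lift $\tilde\xi$ corresponds to the section $s\in\Ga(\Cal AM^\#)$ of the adjoint tractor bundle determined by the equivariant function $\om(\tilde\xi):\Cal G^\#\to\frak g$. Comparing this with the formula $\Xi(x)=\ps_u\o\rho'(\om(\tilde\xi)(u))\o\ps_u^{-1}$ that defines $\Xi$, one sees that $\Xi$ is exactly the fiberwise action of $s$ on $\Cal VM^\#$ induced by the infinitesimal representation $\rho':\frak g\to L(\Bbb V,\Bbb V)$. Since $\rho'$ is a homomorphism of $G$--representations ($G$ acting on $\frak g$ by $\Ad$ and on $L(\Bbb V,\Bbb V)$ by conjugation), it induces a natural bundle map $\Cal AM^\#\to\operatorname{End}(\Cal VM^\#)$ which is compatible with the tractor connections on the two bundles. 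Hence, viewing $\Xi$ as a section of $\operatorname{End}(\Cal VM^\#)$, it is the image of $s$ under this bundle map.

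The key step is then that $\Xi$ is parallel for the tractor connection $\nabla^{\operatorname{End}}$ induced on $\operatorname{End}(\Cal VM^\#)$. Since $\xi$ is normal, $s$ is parallel for $\nabla^{\Cal A}$ by Definition \ref{def4.2}, and compatibility of the bundle map $\Cal AM^\#\to\operatorname{End}(\Cal VM^\#)$ with the connections immediately gives $\nabla^{\operatorname{End}}\Xi=0$. Given this, constant rank follows by a routine argument: for any smooth curve $c$ from a point $x$ to a point $y$, the parallel transport $P_c:\Cal V_xM^\#\to\Cal V_yM^\#$ of $\nabla^{\Cal V}$ intertwines the endomorphisms, i.e.\ $P_c\o\Xi(x)=\Xi(y)\o P_c$. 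As $P_c$ is a linear isomorphism, $\Xi(x)$ and $\Xi(y)$ are conjugate and hence have equal rank. Since $M^\#$ is connected, $\Xi$ has globally constant rank, and it is then standard that $\ker(\Xi)$ and $\Im(\Xi)$ are smooth subbundles of $\Cal VM^\#$ and that the quotient $\Coker(\Xi)=\Cal VM^\#/\Im(\Xi)$ is a smooth vector bundle.

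The main obstacle I anticipate is the careful verification that the infinitesimally induced bundle map $\Cal AM^\#\to\operatorname{End}(\Cal VM^\#)$ intertwines the tractor connections. This is ultimately the equivariance of $\rho'$ under $G$, but to apply it one must express $\nabla^{\operatorname{End}}$ and $\nabla^{\Cal A}$ in terms of differentiation of the corresponding equivariant functions on $\Cal G^\#$ and check that the algebraic map $\frak g\to L(\Bbb V,\Bbb V)$ commutes with the relevant Lie--algebra actions. Once this compatibility is recorded, the parallelity of $\Xi$, and with it the whole proposition, follows at once from the parallelity of $s$.
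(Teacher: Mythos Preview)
Your proof is correct, and the verification you flag as the main obstacle is indeed routine: a $G$--homomorphism between $G$--modules always induces a bundle map intertwining the associated tractor connections, as one sees immediately from the description of tractor connections in terms of equivariant functions on $\Cal G^\#$.

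The paper takes a closely related but slightly different route. Rather than pushing the parallelity of $s$ forward to $\operatorname{End}(\Cal VM^\#)$, it quotes Lemma~2.3 of \cite{hol-red}, which says that on a connected $M^\#$ the equivariant function $\om(\tilde\xi):\Cal G^\#\to\frak g$ associated to a normal infinitesimal automorphism takes values in a single $\Ad(G)$--orbit. Since $\rho'(\Ad(g)X)=\rho(g)\o\rho'(X)\o\rho(g)^{-1}$, all the linear maps $\rho'(\om(\tilde\xi)(u))$ are conjugate, and hence $\Xi$ has constant rank. The two arguments are essentially the same idea at different levels: the cited lemma is itself proved by a holonomy argument, so both ultimately rest on parallel transport intertwining the fibrewise endomorphisms. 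Your version has the advantage of being self-contained and of exhibiting $\Xi$ explicitly as a parallel section of $\operatorname{End}(\Cal VM^\#)$, a fact the paper uses later anyway (see part~(1) of Proposition~\ref{prop4.3}); the paper's version is marginally shorter once the external lemma is granted.
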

\begin{proof}
  In Lemma 2.3 of \cite{hol-red} it is shown that connectedness of
  $M^\#$ implies that for a normal infinitesimal automorphism $\xi$, the
  image of the function $\om(\tilde\xi):\Cal G^\#\to\frak g$ is
  contained in a single orbit of the adjoint action of $g$. Now for
  $g\in G$ and $X\in\frak g$, and the actions $\rho$ of $G$ and
  $\rho'$ of $\frak g$ on $\Bbb V$, it is well known that
  $\rho'(\Ad(g)(X))=\rho(g)\o\rho'(X)\o\rho(g)^{-1}$. This shows that
  the maps $\rho'(X)$ and $\rho'(\Ad(g)(X))$ have the same rank, which
  by construction implies that $\Xi$ has constant rank. All other
  claims are well known consequences of this fact.  
\end{proof}

Using $\Xi$, we can now define several subspaces in the space of $\Cal
VM$--valued differential forms. First, we of course have
$\Om^k(M^\#,\ker(\Xi))\subset\Om^k(M^\#,\Cal VM^\#)$. Moreover, even
though $\ker(\xi)$ is not a natural vector bundle, it makes no problem
to require $\Cal L_{\tilde\xi}\ph=0$ as well as $i_\xi\ph=0$ for
$\ph\in\Om^k(M^\#,\ker(\Xi))$. Thus we can define
$K^i:=\Om^i_\xi(M^\#,\ker(\Xi))_{hor}\subset A^i_{hor}$. The cokernel
of $\Xi$ is more complicated to deal with, and we need some
preliminary results to do this.

\subsection{The Cartan formula}\label{4.3}
We next derive an analog of the Cartan formula for the covariant
exterior derivative $d^\nabla$ on $\Om^*(M^\#,\Cal VM^\#)$. This will
be a crucial steps towards the construction of various subcomplexes
and to a description of the cohomology of the subcomplex from Theorem
\ref{thm4.1}. 

\begin{lemma}\label{lem4.3}
For any $\ph\in\Om^*(M^\#,\Cal VM^\#)$ we have 
$$
\Cal L_{\tilde\xi}\ph=i_\xi d^\nabla\ph+d^\nabla
i_\xi\ph-\Xi_*(\ph), 
$$
where $\Xi_*:\Om^*(M^\#,\Cal VM^\#)\to\Om^*(M^\#,\Cal VM^\#)$ is given
by applying $\Xi$ to the values of $\Cal VM^\#$--valued forms.  

In particular, for $\ph\in A^*_\hor$, we have $i_\xi
d^\nabla\ph=\Xi_*(\ph)$. 
\end{lemma}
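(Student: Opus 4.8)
The plan is to prove the Cartan-type formula by reducing it to the corresponding statement for a genuine linear connection and then controlling the error term, which is exactly where $\Xi_*$ enters. The key point is that $d^\nabla$ is the covariant exterior derivative associated to the tractor connection $\nabla^{\Cal V}$, so it satisfies the standard Cartan formula $\Cal L^{\nabla}_\xi = i_\xi d^\nabla + d^\nabla i_\xi$, where $\Cal L^\nabla_\xi$ denotes the \emph{covariant} Lie derivative along $\xi$ determined by the connection. The entire content of the lemma is therefore that the naturality-defined operator $\Cal L_{\tilde\xi}$ differs from this covariant Lie derivative precisely by the tensorial term $-\Xi_*$.

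First I would pin down both sides in the frame-bundle picture, writing $\Cal VM^\#$-valued $k$-forms as $P$-equivariant $\La^k(\frak g/\frak p)^*\otimes\Bbb V$-valued functions on $\Cal G^\#$. Differentiating such a function in the direction of the $P$-invariant lift $\tilde\xi$ gives $\Cal L_{\tilde\xi}$ by definition. The covariant Lie derivative, on the other hand, is computed by the same fundamental-derivative mechanism but using the trivialization coming from $\om$, so that the representation of $\frak g$ on $\Bbb V$ acts on the value. Comparing the two amounts to tracking how the $\frak g$-action on the $\Bbb V$-factor intervenes: differentiating the equivariant function that records a form, versus differentiating its components in the $\om$-induced frame, produces exactly one extra term in which $\om(\tilde\xi)$ acts on $\Bbb V$ via $\rho'$. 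By the very construction of $\Xi$ in Section \ref{4.2}, this extra term is $\Xi_*(\ph)$, and it enters with the sign making $\Cal L_{\tilde\xi}=\Cal L^\nabla_\xi-\Xi_*$. Combined with the standard Cartan formula for $d^\nabla$, this gives the displayed identity.

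The step I expect to be the main obstacle is verifying the sign and the precise identification of the correction term with $\Xi_*$: one must be careful that the $\frak g$-action appearing when one differentiates the trivialized components is indeed $\rho'(\om(\tilde\xi))$ acting on values and not on the form indices, and that this matches the bundle map $\Xi$ built from $\rho'(\om(\tilde\xi)(u))$ in the homogeneous trivialization. This is a routine but error-prone bookkeeping of equivariance conventions; I would fix conventions by first checking the formula on $0$-forms (i.e.\ sections $\si$ of $\Cal VM^\#$), where $d^\nabla i_\xi\si=0$, so the identity reduces to $\Cal L_{\tilde\xi}\si=\nabla_\xi\si-\Xi(\si)$, and this single case already isolates the correction term cleanly. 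Extending from $0$-forms to general $k$-forms then follows because all three operators $\Cal L_{\tilde\xi}$, the covariant Lie derivative, and $\Xi_*$ are compatible with wedging by real forms and with the $i_\xi$-contraction, so the general case reduces to the $0$-form case together with the classical Cartan identity.

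Finally, the ``in particular'' statement is immediate: for $\ph\in A^*_\hor$ we have both $\Cal L_{\tilde\xi}\ph=0$ (since $\ph\in\Om^*_\xi$) and $i_\xi\ph=0$ (horizontality), so $d^\nabla i_\xi\ph=0$ as well, and the general formula collapses to $0=i_\xi d^\nabla\ph-\Xi_*(\ph)$, that is $i_\xi d^\nabla\ph=\Xi_*(\ph)$.
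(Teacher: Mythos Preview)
Your proposal is correct and follows essentially the same approach as the paper. The paper computes $i_\xi d^\nabla\ph+d^\nabla i_\xi\ph$ directly on vector fields, obtaining $\nabla^{\Cal V}_\xi\ph(\eta_1,\dots,\eta_k)$ plus the usual Lie--bracket terms, and then invokes the tractor connection formula (Theorem 1.5.8 of \cite{book}) to split $\nabla^{\Cal V}_\xi$ on sections as the fundamental derivative $D_s=\Cal L_{\tilde\xi}$ plus $s\bullet\ =\Xi$; this is exactly your $0$--form identity $\Cal L_{\tilde\xi}\si=\nabla_\xi\si-\Xi(\si)$, and the remaining Lie--bracket terms then assemble into $(\Cal L_{\tilde\xi}\ph)(\eta_1,\dots,\eta_k)$ by naturality, which is your extension step done pointwise rather than via the derivation property. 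The only thing you rely on implicitly is that $\Cal L_{\tilde\xi}$ agrees with the ordinary $\Cal L_\xi$ on real forms, but this was already established in the proof of Lemma \ref{lem4.2}, so there is no gap.
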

\begin{proof}
The first step is as in the proof of Cartan's formula for the exterior
derivative. Using the standard formula for $d^\nabla$, one verifies
that the value of $i_\xi d^\nabla\ph+d^\nabla i_\xi\ph$ maps vector
fields $\eta_1,\dots,\eta_k\in\frak X(M^\#)$ to
\begin{equation}\label{iddi}
\nabla^{\Cal VM}_\xi\ph(\eta_1,\dots,\eta_k)+\textstyle\sum_{i=1}^k(-1)^{i}
\ph([\xi,\eta_i],\eta_1,\dots,\widehat{\eta_i},\dots,\eta_k).
\end{equation}
Let us denote by $s\in\Ga(\Cal AM^\#)$ the section of the adjoint
tractor bundle corresponding to $\tilde\xi\in\frak X(\Cal
G^\#)^P$. Then by the construction from Section \ref{4.1}, the
operator $\Cal L_{\tilde\xi}$ coincides with the so--called
fundamental derivative $D_s$, see Section 1.5.8 of
\cite{book}. Likewise, the bundle map $\Xi$ by construction coincides
with the operation $s\bullet\ $ from Section 1.5.7 of
\cite{book}. Thus the formula for the tractor connection in Theorem
1.5.8 of \cite{book} shows that the first summand in \eqref{iddi} can
be rewritten as
\begin{equation}\label{nabla}
\Cal L_{\tilde\xi}(\ph(\eta_1,\dots,\eta_k))+
\Xi(\ph(\eta_1,\dots,\eta_k)). 
\end{equation}
In the second part of \eqref{iddi}, we can move the Lie bracket to the
$i$th entry of $\ph$ at the expense of a sign $(-1)^{i-1}$. As noted
in the proof of Lemma \ref{lem4.2}, we have $[\xi,\eta_i]=\Cal
L_{\tilde\xi}\eta_i$, so the second term in \eqref{iddi} can be
written as 
$$
-\textstyle\sum_{i=1}^k\ph(\eta_1,\dots,\Cal
L_{\tilde\xi}\eta_i,\dots,\eta_k), 
$$
and naturality of $\Cal L_{\tilde\xi}$ implies that this adds up with
the first term in \eqref{nabla} to $(\Cal
L_{\tilde\xi}\ph)(\eta_1,\dots,\eta_k)$. 
\end{proof}

The last statement of the Lemma shows that $d^\nabla$ does not
preserve the subspace $\Om^*_\xi(M^\#,\Cal VM^\#)_{hor}$. Of course
there is the possibility of combining $d^\nabla$ with the projection
to horizontal forms from Lemma \ref{lem4.2}:

\begin{definition}\label{def4.3}
We define the \textit{horizontal derivative} 
$$
\hat d:\Om^k(M^\#,\Cal VM^\#)\to \Om^{k+1}(M^\#,\Cal VM^\#)
$$
by $\hat d\ph=d^\nabla\ph-\al\wedge i_\xi d^\nabla\ph$. 
\end{definition}

Notice that by definition $i_\xi\hat d\ph=0$ for all
$\ph\in\Om^*(M^\#,\Cal VM^\#)$. Moreover, all the operations used in
the definition are compatible with $\Cal L_{\tilde\xi}$, so $\hat d
(A^k)\subset A^{k+1}_\hor$.

\begin{prop}\label{prop4.3}
  Assuming that $\xi\in\frak X(M^\#)$ is a normal infinitesimal
  automorphism, we have:

  (1) The operator $\Xi_*$ commutes with $d^\nabla$ and with $\hat d$.

  (2) If $d^{\nabla}\o d^\nabla=0$, then the subspaces
  $K^k=\Om^k_\xi(M^\#,\ker(\Xi))_{hor}$ form a subcomplex of
  $(A^*,d^\nabla)$.

  (3) Denoting by $C^k$ the quotient $A^k_\hor/\Xi_*(A^k_\hor)$, the
  horizontal derivative induces a well defined operator $\hat d:C^k\to
  C^{k+1}$ for each $k$. If $d^{\nabla}\o d^\nabla=0$, then $(C^*,\hat
  d)$ is a complex.
\end{prop}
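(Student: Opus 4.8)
The plan is to derive all three assertions from part (1) together with the Cartan-type formula of Lemma \ref{lem4.3} and the vanishing $\al\wedge\al=0$.

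For part (1) the decisive observation is that, since $\xi$ is normal, the adjoint tractor $s=\om(\tilde\xi)\in\Ga(\Cal AM^\#)$ is parallel for the tractor connection $\nabla^{\Cal A}$ (Definition \ref{def4.2}). As recorded in Section \ref{4.2}, the bundle map $\Xi$ is precisely the algebraic action $s\bullet\ $ of $s$ on $\Cal VM^\#$ (Section 1.5.7 of \cite{book}). Since the tractor connection is compatible with this action, the Leibniz rule yields $\nabla^{\Cal V}_\eta(\Xi(v))=(\nabla^{\Cal A}_\eta s)\bullet v+s\bullet\nabla^{\Cal V}_\eta v=\Xi(\nabla^{\Cal V}_\eta v)$, so $\Xi$ is a parallel bundle endomorphism and therefore commutes with the associated covariant exterior derivative: $\Xi_*\o d^\nabla=d^\nabla\o\Xi_*$. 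As $\Xi_*$ acts only on values, it trivially commutes with $i_\xi$ and with wedging by the real one--form $\al$, whence it also commutes with $\hat d$.

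Part (2) then follows by feeding part (1) into Lemma \ref{lem4.3}. For $\ph\in K^k$ horizontality gives $i_\xi d^\nabla\ph=\Xi_*(\ph)=0$, so $d^\nabla\ph$ is again horizontal, and it lies in $A^{k+1}$ by Theorem \ref{thm4.1}. Moreover $\Xi_*(d^\nabla\ph)=d^\nabla(\Xi_*(\ph))=0$ by part (1), so $d^\nabla\ph$ has values in $\ker(\Xi)$; thus $d^\nabla\ph\in K^{k+1}$, and $d^\nabla\o d^\nabla=0$ is inherited from $A^*$.

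The real work is part (3). Well--definedness of the induced operator is immediate: $\hat d$ maps $A^k_\hor$ to $A^{k+1}_\hor$ (remark after Definition \ref{def4.3}) and commutes with $\Xi_*$ by part (1), so it sends $\Xi_*(A^k_\hor)$ into $\Xi_*(A^{k+1}_\hor)$ and descends to $C^k\to C^{k+1}$. To show $\hat d\o\hat d$ vanishes on $C^*$, I would first use Lemma \ref{lem4.3} to write, for horizontal $\ph$, $\hat d\ph=d^\nabla\ph-\al\wedge\Xi_*(\ph)$. Applying $\hat d$ once more, expanding with the graded Leibniz rule for $d^\nabla(\al\wedge-)$, invoking $d^\nabla\o d^\nabla=0$ and part (1), and cancelling the two terms $\al\wedge\Xi_*(d^\nabla\ph)$ by means of $\al\wedge\al=0$, one is left with
$$
\hat d(\hat d\ph)=-d\al\wedge\Xi_*(\ph).
$$
The closing step is to recognise the right--hand side as a $\Xi_*$--image: since $i_\xi d\al=0$ and $\Cal L_\xi d\al=0$ (from $\Cal L_\xi\al=0$ in the proof of Lemma \ref{lem4.2}), the form $d\al\wedge\ph$ lies in $A^{k+2}_\hor$, and $\Xi_*$ acting on values gives $d\al\wedge\Xi_*(\ph)=\Xi_*(d\al\wedge\ph)$. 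Hence $\hat d\o\hat d\ph=-\Xi_*(d\al\wedge\ph)\in\Xi_*(A^{k+2}_\hor)$ is zero in $C^{k+2}$. I expect the main obstacle to be arranging this second computation so that the $\al\wedge\al=0$ cancellation is transparent and the surviving $d\al$--term is correctly identified with the image of $\Xi_*$.
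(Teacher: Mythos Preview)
Your proposal is correct and follows essentially the same route as the paper: part (1) via parallelity of the adjoint tractor $s$ and the Leibniz rule, part (2) by combining Lemma \ref{lem4.3} with part (1), and part (3) by expanding $\hat d\hat d\ph$ from $\hat d\ph=d^\nabla\ph-\al\wedge\Xi_*(\ph)$ to obtain $-\Xi_*(d\al\wedge\ph)$. One small point of exposition: the two terms $\pm\al\wedge\Xi_*(d^\nabla\ph)$ cancel directly by sign, while $\al\wedge\al=0$ is what kills the separate term $\al\wedge\al\wedge\Xi_*^2(\ph)$; otherwise your computation matches the paper's.
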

\begin{proof}
  (1) Let $s\in\Ga(\Cal AM^\#)$ be the section corresponding to
  $\tilde\xi\in\frak X(\Cal G^\#)$, so by assumption $\nabla^{\Cal
    A}s=0$. As we have noted in the proof of Lemma \ref{lem4.3} above, we
  get $\Xi_*\ph(\eta_1,\dots,\eta_k)=s\bullet
  (\ph(\eta_1,\dots,\eta_k))$ for arbitrary vector fields
  $\eta_1,\dots,\eta_k\in\frak X(M^\#)$. Using Proposition 1.5.7 of
  \cite{book}, this shows that
  $$
  \nabla_\eta^{\Cal
    V}(\Xi_*\ph(\eta_1,\dots,\eta_k))=\Xi_*(\nabla_\eta^{\Cal
    V}\ph(\eta_1,\dots,\eta_k))
$$
holds for arbitrary vector fields $\eta,\eta_1,\dots,\eta_k$. Using
the standard formula for $d^\nabla$, this readily implies that
$d^\nabla\o\Xi_*=\Xi_*\o d^\nabla$. Since we evidently get
$\Xi_*(\alpha\wedge i_\xi\ph)=\alpha\wedge i_\xi(\Xi_*\ph)$, this also
implies $\hat d\o\Xi_*=\Xi_*\o \hat d$.

(2) Theorem \ref{thm4.1} and the last part of Lemma \ref{lem4.3} show that
for $\ph\in K^k$, we get $d^\nabla\ph\in A^{k+1}_\hor$. By part (1),
we also get $\Xi_*(d^\nabla\ph)=d^\nabla(\Xi_*\ph)=0$, so indeed
$d^\nabla(K^k)\subset K^{k+1}$, and the last claim is obvious.

(3) For $\ps\in A^k_\hor$ we get $\hat d\Xi_*\ps=\Xi_*(\hat d\ps)$ by
part (1). But as observed above, $\hat d\ps\in A^{k+1}_\hor$, so we
conclude that $\hat d$ induces a well defined operator $C^k\to
C^{k+1}$. Next, for $\ph\in A^k_\hor$, the definition of $\hat d$ and
the last part of Lemma \ref{lem4.3} show that $\hat
d\ph=d^\nabla\ph-\al\wedge\Xi_*\ph$. The standard formula for
$d^\nabla$ easily implies that we can compute
$d^{\nabla}(\al\wedge\Xi_*\ph)$ as $d\al\wedge \Xi_*\ph-\al\wedge
d^\nabla(\Xi_*\ph)$. Since $\xi$ is the Reeb field for $\al$, the
first summand is horizontal already. On the other hand, the second
summand lies in the kernel of the projection to horizontal
forms, so $\hat d(\al\wedge\Xi_*\ph)=\Xi_*(d\al\wedge\ph)$. But
assuming $d^\nabla\o d^\nabla=0$, the fact that $\hat
d\ph=d^\nabla\ph-\al\wedge\Xi_*\ph$ implies that 
$$
\hat d\hat d\ph=-\hat d(\al\wedge\Xi_*\ph)=-\Xi_*(d\al\wedge\ph),
$$
so the last claim follows. 
\end{proof}

\subsection{A long exact sequence}\label{4.4} 
We are now ready to construct a long exact sequence of cohomology
groups, which will be the fundamental tool to compute the cohomology
of descended BGG sequences. To define the necessary maps, let us first
make the definition of the cohomology of $(C^*,\hat d)$ more
explicit. We assume that $d^\nabla\o d^\nabla=0$ from now on. A
$k$--cocycle in the complex $(C^*,\hat d)$ by definition is
represented by a form $\ph\in A^k_\hor$ for which there is a form
$\ps\in A^{k+1}_\hor$ such that $\hat d\ph=\Xi_*\ps$. We then simply
write $[\ph]\in H^k(C^*,\hat d)$ for the cohomology class represented
by $\ph$, and $[\ph]=[\tilde\ph]$ if and only if there are forms
$\ps_1\in A^{k-1}_\hor$ and $\ps_2\in A^k_\hor$ such that
$\tilde\ph=\ph+\hat d\ps_1+\Xi_*\ps_2$.

Now let us assume that $\tau\in A^k$ such that $d^\nabla\tau=0$. Then
by Lemma \ref{lem4.2} we get $i_\xi\tau\in A^{k-1}_\hor$ and Lemma
\ref{lem4.3} shows that $d^\nabla i_\xi\tau=\Xi_*\tau$. By definition,
this implies that $\hat di_\xi\tau=\Xi_*(\tau-\al\wedge i_\xi\tau)$,
so we can form the class $[i_\xi\tau]\in H^{k-1}(C^*,\hat d)$. We
obtain a map $\pi$ from $\ker(d^\nabla)\subset A^k$ to
$H^{k-1}(C^*,\hat d)$.

On the other hand, suppose that we have given $\ph\in A^{k-1}_\hor$
and $\ps\in A^k_\hor$ such that $\hat d\ph=\Xi_*\ps$. Then we can form
$d\al\wedge\ph+\hat d\ps$ and using that $\xi$ is the Reeb field for
$\al$, we see that this lies in $A^{k+1}_\hor$. Moreover by part (1)
of Proposition \ref{prop4.3} we get $\Xi_*(\hat d\ps)=\hat
d(\Xi_*\ps)=\hat d\hat d\ph$. In the proof of part (3) of that
Proposition, we have seen that $\hat d\hat d\ph=-\Xi_*(d\al\wedge\ph)$,
which shows that actually $d\al\wedge\ph+\hat d\ps\in K^{k+1}$.

By the last part of Lemma \ref{lem4.3}, $i_\xi d^\nabla\ps=\Xi_*\ps=\hat
d\ph$, so $\hat d\ps=d^\nabla\ps-\al\wedge\hat d\ph$, and in the last
term we can replace $\hat d\ph$ by $d^\nabla\ph$. Using this and
$(d^\nabla)^2=0$, we get $d^\nabla\hat d\ps=-d\al\wedge
d^\nabla\ph$. But this clearly cancels with $d^\nabla(d\al\wedge\ph)$,
so $d\al\wedge\ph+\hat d\ps$ is a cocycle in $K^{k+1}$ and we can form
the cohomology class $[d\al\wedge\ph+\hat d\ps]\in
H^{k+1}(K^*,d^{\nabla})$.

In the beginning we had fixed a form $\ps$ such that $\hat
d\ph=\Xi_*\ps$. Of course this pins down $\ps$ up to adding an element
of $K^k$. This shows that the cohomology class $[d\al\wedge\ph+\hat
d\ps]$ depends only on $\ph$, so we get a well defined map 
$$
\delta:\{\ph\in A^{k-1}_\hor: \hat d\ph\in \Xi_*(A^k_\hor)\}\to
H^{k+1}(K^*,d^{\nabla}).
$$ 

\begin{thm}\label{thm4.4}
  The maps $\pi$ and $\delta$ induce well defined maps in cohomology,
  which we denote by the same symbols, i.e.~$\pi:H^k(A^*,d^\nabla)\to
  H^{k-1}(C^*,\hat d)$ and $\delta:H^{k-1}(C^*,\hat d)\to
  H^{k+1}(K^*,d^\nabla)$. Together with the map $j$ induced by the
  inclusion $K^*\hookrightarrow A^*$, these fit into a long exact
  sequence of the form
$$
\dots\overset{\delta}{\to} H^k(K^*,d^\nabla)\overset{j}{\to}
H^k(A^*,d^\nabla) \overset{\pi}{\to} H^{k-1}(C^*,\hat d)
\overset{\delta}{\to} H^{k+1}(K^*,d^{\nabla})\overset{j}{\to}\dots
$$
\end{thm}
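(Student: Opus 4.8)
The plan is to avoid looking for a single short exact sequence of complexes producing this long exact sequence (none is available, since $\Xi\neq 0$ prevents $A^*_\hor$ from being $d^\nabla$-stable) and instead to run the entire argument through the splitting of Lemma \ref{lem4.2}. Writing each $\tau\in A^k$ uniquely as $\tau=\ph_1+\al\wedge\ph_2$ with $\ph_1\in A^k_\hor$ and $\ph_2\in A^{k-1}_\hor$, I would first record the block form of the differential. Combining the definition of $\hat d$, the Cartan formula $i_\xi d^\nabla\ph=\Xi_*\ph$ for horizontal $\ph$ from Lemma \ref{lem4.3}, and the identity $d^\nabla(\al\wedge\ph_2)=d\al\wedge\ph_2-\al\wedge\hat d\ph_2$, one finds that the horizontal part of $d^\nabla\tau$ is $\hat d\ph_1+d\al\wedge\ph_2$ while $i_\xi d^\nabla\tau=\Xi_*\ph_1-\hat d\ph_2$. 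In this language $K^k$ is the first summand intersected with $\ker\Xi_*$, while $C^{k-1}=A^{k-1}_\hor/\Xi_*(A^{k-1}_\hor)$, and the three maps read: $j$ is the inclusion of the first summand, $\pi(\tau)=[i_\xi\tau]=[\ph_2]$, and $\delta([\ph])=[d\al\wedge\ph+\hat d\ps]$ with $\hat d\ph=\Xi_*\ps$.

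The second step is well-definedness on cohomology. For $\pi$, the block form shows that if $\tau=d^\nabla\si$ then $i_\xi\tau=\Xi_*\si_1-\hat d\si_2$, whose class in $C^{k-1}$ is the coboundary $-\hat d[\si_2]$; and if $d^\nabla\tau=0$ then $\hat d(i_\xi\tau)=\Xi_*\ph_1\in\Xi_*(A^k_\hor)$, so $[i_\xi\tau]$ is a $\hat d$-cocycle. For $\delta$, independence of the auxiliary $\ps$ is immediate, since $\ps$ is pinned down up to $\ker\Xi_*=K^k$ and altering it changes the representative by a $K$-coboundary; invariance under replacing $\ph$ by $\hat d\eta+\Xi_*\ze$ is a one-line computation using $\hat d\hat d=-\Xi_*(d\al\wedge-)$ together with the commutations $[\hat d,\Xi_*]=0$ and $\hat d(d\al\wedge-)=d\al\wedge\hat d$, all from Proposition \ref{prop4.3} and the definition of $\hat d$.

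The heart of the matter is exactness, and in the block picture each spot collapses to short linear algebra. The two composite-zero inclusions are immediate: $j\o\delta=0$ because the representative $d\al\wedge\ph+\hat d\ps$ of $\delta[\ph]$ equals $d^\nabla(\al\wedge\ph+\ps)$, and $\delta\o\pi=0$ because for closed $\tau$ one may take $\ps=\ph_1$, whereupon $d\al\wedge\ph_2+\hat d\ph_1$ is the (vanishing) horizontal part of $d^\nabla\tau$. For the reverse inclusions at $H^{k+1}(K^*,d^\nabla)$ and $H^{k-1}(C^*,\hat d)$, unwinding the hypothesis that a class maps to zero yields precisely the pair $(\ps,\ph)$ exhibiting it as $\delta$ of a $C$-cocycle, respectively the element $\ph_1$ exhibiting it as $j$ of a $K$-cocycle. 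The only spot needing a genuine idea is exactness at $H^k(A^*,d^\nabla)$: given a closed $\tau=\ph_1+\al\wedge\ph_2$ with $[\ph_2]=0$ in $H^{k-1}(C^*,\hat d)$, I would write $\ph_2=\hat d\be+\Xi_*\ga$ and subtract the coboundary $d^\nabla(\ga-\al\wedge\be)$ to render $\tau$ horizontal; closedness of the resulting $\tilde\ph_1$ then reads $\hat d\tilde\ph_1=0$ and $\Xi_*\tilde\ph_1=0$, so $\tilde\ph_1$ automatically lands in $K^k$ and is the desired $j$-preimage.

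I expect the main obstacle to be organizational rather than conceptual: keeping the horizontal/contraction splitting, the sign in $d^\nabla(\al\wedge-)$, and the two distinct ambiguities of a $C$-class (coboundaries $\hat d\eta$ versus elements of the image $\Xi_*\ze$) straight throughout. The one substantive point, which is exactly what singles out $K^*$ and $C^*$ as the correct sub- and quotient-objects, is the observation used at $H^k(A^*)$: since $d^\nabla$ is not horizontal one cannot restrict it to $A^*_\hor$, but once the vertical part is removed the remaining closedness equation splits into a $\hat d$-closedness and an $\Xi_*$-vanishing, and it is the latter that forces the representative into $\ker\Xi_*=K^k$.
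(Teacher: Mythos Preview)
Your proposal is correct and follows essentially the same approach as the paper's proof. Both arguments rest on the same ingredients (Lemma~\ref{lem4.2}, Lemma~\ref{lem4.3}, and Proposition~\ref{prop4.3}) and carry out the same six verifications; your choice to first record the block form $d^\nabla(\ph_1+\al\wedge\ph_2)=(\hat d\ph_1+d\al\wedge\ph_2)+\al\wedge(\Xi_*\ph_1-\hat d\ph_2)$ is a clean organizational device, but the paper's direct computations unwind to exactly the same identities---for instance, your correction $d^\nabla(\ga-\al\wedge\be)$ at the $H^k(A^*)$ step is the paper's $-d^\nabla(\al\wedge\ps_1-\ps_2)$ with $\be=\ps_1$, $\ga=\ps_2$.

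One small slip of wording: in your sentence on the reverse inclusions at $H^{k+1}(K^*,d^\nabla)$ and $H^{k-1}(C^*,\hat d)$, the second clause should read ``exhibiting it as $\pi$ of an $A$-cocycle'' (namely $\tau=\ps+\al\wedge\ph$ with $\ps$ adjusted so that $d\al\wedge\ph+\hat d\ps=0$), not ``$j$ of a $K$-cocycle''. This is only a typo in the sketch; the argument you describe elsewhere makes clear you have the right construction.
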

\begin{proof}
  Since both $\pi$ and $\delta$ are evidently linear, we have to show
  that they vanish on elements representing trivial cohomology classes
  to obtain well defined maps in cohomology. If $\tau\in A^{k-1}$,
  then Lemma \ref{lem4.3} shows that $i_\xi d^\nabla\tau=-d^\nabla
  i_\xi\tau+\Xi_*\tau$. Writing $-d^\nabla i_\xi\tau$ as $-\hat
  di_\xi\tau-\al\wedge i_\xi d^\nabla i_\xi\tau$, the second summand
  can be rewritten as $-\al\wedge \Xi_*(i_\xi\tau)$ by Lemma
  \ref{lem4.3}. Hence we see that $\pi(d^\nabla\tau)=\Xi_*(\tau-\al\wedge
  i_\xi\tau)$, and since $\tau-\al\wedge i_\xi\tau\in A^{k-1}_\hor$,
  this has trivial class in $H^k(C^*,\hat d)$.

  On the other hand, take $\ps_1,\ps_2\in A^*_\hor$ of degrees $k-2$
  and $k-1$ respectively. To determine $\de(\hat d\ps_1+\Xi_*\ps_2)$,
  we first have to compute the image of this element under $\hat
  d$. This gives $-d\al\wedge \Xi_*\ps_1+\Xi_*\hat d\ps_2$, so
$$
\de(d^\nabla\ps_1+\Xi_*\ps_2)=d\al\wedge
\hat d\ps_1+d\al\wedge\Xi_*\ps_2+\hat d(-d\al\wedge\ps_1+\hat d\ps_2).  
$$ 
Now the second and last term in the right hand side clearly cancel,
and a short computation shows that $ \hat
d(d\al\wedge\ps_1)=d\al\wedge\hat d\ps_1$, so the other two terms
cancel, too. This shows that $\delta$ induces a well defined map in
cohomology.

To prove exactness of the sequence, we first observe that for $\ph\in
K^k$, we have $i_\xi\ph=0$ by definition, so $\pi\o j=0$. On the other
hand, suppose that $\tau\in A^k$ satisfies $d^\nabla\tau=0$ and
$\pi([\tau])=0$. Then $i_\xi\tau=\hat d\ps_1+\Xi_*\ps_2$ for elements
$\ps_1,\ps_2\in A^*_\hor$ of degree $k-2$ and $k-1$,
respectively. Then consider the form
$$
\tilde\tau:=\tau+d^\nabla(\al\wedge\ps_1-\ps_2)=\tau+d\al\wedge\ps_1-\al\wedge d^\nabla\ps_1-d^\nabla\ps_2,
$$ 
which represents the same cohomology class as $\tau$. Under insertion
of $\xi$, the second summand in the right hand side vanishes, while
the third summand produces $-d^\nabla\ps_1+\al\wedge i_\xi
d^\nabla\ps_1=-\hat d\ps_1$ and the last one gives
$-\Xi_*\ps_2$. Hence $i_\xi\tilde\tau=0$ and since also
$d^\nabla\tilde\tau=0$, Lemma \ref{lem4.3} shows that
$\Xi_*\tilde\tau=0$. Hence $\tilde\tau$ is a cocycle in $K^k$ and
$\ker(\pi)=\im(j)$.

Next, we claim that $\delta\o\pi=0$. Taking $\tau\in A^k$ with
$d^\nabla\tau=0$, we have observed above that $\hat d
i_\xi\tau=\Xi_*(\tau-\al\wedge i_\xi\tau)$. So by definition,
$\delta([i_\xi\tau])$ is the cohomology class of $d\al\wedge
i_\xi\tau+\hat d(\tau-\al\wedge i_\xi\tau)$. Computing
$d^\nabla(\tau-\al\wedge i_\xi\tau)$ using that $\tau$ is closed, we
get $-d\al\wedge i_\xi\tau+\al\wedge d^\nabla i_\xi\tau$. Projecting to
the horizontal part leaves the first term untouched and kills the
second term, so $\delta([i_\xi\tau])=0$.

Conversely, let us assume that $\ph\in A^{k-1}_\hor$ has the property
that $\hat d\ph=\Xi_*\ps$ for some $\ps\in A^k_\hor$ and that
$\delta([\ph])=0$. This means that there is $\tilde\ps\in K^k$ such
that $d\al\wedge \ph+\hat d\ps=d^\nabla\tilde\ps$. Taking into account
that $i_\xi d^\nabla\tilde\ps=0$, we may simply replace $\ps$ by
$\ps-\tilde\ps$, and assume that $\hat d\ph=\Xi_*\ps$ and $d\al\wedge
\ph+\hat d\ps=0$. Now consider $\tau:=\al\wedge\ph+\ps\in A^k$, which
evidently satisfies $i_\xi\tau=\ph$. Now 
$$
d^\nabla\tau=d\al\wedge\ph-\al\wedge d^\nabla\ph+d^\nabla\ps
$$
Now in the second summand, we can clearly replace $d^\nabla$ by $\hat
d$. In the third summand, we rewrite $d^\nabla\ps=\hat d\ps+\al\wedge
i_\xi d^\nabla\ps$. Rewriting the last term as $\al\wedge\Xi_*\ps$ we
conclude that $d^\nabla\tau=0$, so $[\ph]=\pi([\tau])$ and
$\ker(\delta)=\im(\pi)$.

Finally, if $\ph\in A^{k-1}_\hor$ has the property that $\hat
d\ph=\Xi_*\ps$ for some $\ps\in A^k_\hor$, then
$\delta([\ph])=d\al\wedge\ph+\hat d\ps$. But then
$\tau:=\al\wedge\ph+\ps\in A^k$ and we get 
$$
d^\nabla\tau=d\al\wedge\ph-\al\wedge d^\nabla\ph+\hat d\ps+\al\wedge
\Xi_*\ps.
$$
This vanishes since in the second term we may replace $d^\nabla$ by
$\hat d$, and we see that $j\o\delta=0$. 

Conversely, assume that $\ph\in K^{k+1}$ has the property that
$\ph=d^\nabla\tau$ for some $\tau\in A^k$. Then by assumption, we have
$0=i_\xi d^\nabla\tau$ and Lemma \ref{lem4.3} shows that $d^\nabla
i_\xi\tau=\Xi_*\tau$. Thus we get $\hat d
i_\xi\tau=\Xi_*(\tau-\al\wedge i_\xi\tau)$ and we may form
$[i_\xi\tau]\in H^{k-1}(C^*,\hat d)$. But then $\delta([i_\xi\tau])$
is the class of $d\al\wedge i_\xi\tau-\hat d(\tau-\al\wedge
i_\xi\tau)$. Now $\hat d\tau=d^\nabla\tau=\ph$, while
$d^\nabla(\al\wedge i_\xi\tau)=d\al\wedge i_\xi\tau-\al\wedge d^\nabla
i_\xi\tau$. As before, projecting the right hand side to the
horizontal part leaves the first term unchanged and kills the second
term, so $\delta([i_\xi\tau])=[\ph]$, which completes the proof.
\end{proof}

\subsection{The case of the homogeneous model}\label{4.5} 
As a last step, we specialize further to the case of PCS--quotients
of connected open subsets of the homogeneous model $G/P$ of a
parabolic contact structure. In particular, this includes the global
contactification $S^{2n+1}\to\Bbb CP^n$ in the two geometric
interpretations discussed in Proposition 1 and Section 3.4 of
\cite{PCS2}. In the second case, we obtain generalizations of all
results on cohomology needed for the applications in \cite{E-G}.

Observe that all restrictions we have imposed so far are satisfied for
a connected open subset in $G/P$, since any infinitesimal automorphism
of a locally flat geometry is normal. The crucial additional
ingredient we get for the homogeneous model is that any tractor bundle
admits a global parallel frame.

\begin{lemma}\label{lem4.5}
  Let $M^\#=G/P$ be the homogeneous model of a parabolic contact
  structure, let $\Bbb V$ be a representation of $G$ and $\Cal
  VM^\#=G\x_P\Bbb V$ the corresponding tractor bundle. For $v\in\Bbb
  V$ consider the section $\si_v\in\Ga(\Cal VM^\#)$ corresponding to
  the $P$--equivariant function $f_v:G\to\Bbb V$ defined by
  $f_v(g):=g^{-1}\cdot v$. Then $\nabla^{\Cal V}s_v=0$, so starting
  from a basis of $\Bbb V$, we obtain a global parallel frame for
  $\Cal VM^\#$.
\end{lemma}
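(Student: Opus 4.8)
The plan is to compute the tractor connection directly on the homogeneous model, where the Cartan bundle is $G$ itself and the normal Cartan connection $\om$ is the (left) Maurer--Cartan form of $G$. First I would recall the standard formula for the tractor connection (the formula from Theorem 1.5.8 of \cite{book}): if $s\in\Ga(\Cal VM^\#)$ corresponds to the $P$--equivariant function $f$ and $\eta\in\frak X(M^\#)$ has \emph{any} lift $\tilde\eta\in\frak X(G)$, then $\nabla^{\Cal V}_\eta s$ corresponds to the function $\tilde\eta\cdot f+\rho'(\om(\tilde\eta))\cdot f$, where $\rho$ denotes the representation of $G$ on $\Bbb V$ and $\rho'$ the induced infinitesimal action of $\frak g$. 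The point I want to exploit is that this expression is independent of the chosen lift: changing $\tilde\eta$ by a fundamental (vertical) field $\zeta_Z$ alters $\tilde\eta\cdot f$ by $-\rho'(Z)f$ (by equivariance $f(u\cdot g)=g^{-1}\cdot f(u)$) and alters $\rho'(\om(\tilde\eta))f$ by $\rho'(Z)f$, so the two contributions cancel. In particular one is free to evaluate the formula pointwise using whatever lift is convenient, even one that is not $P$--invariant.

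The main step is then to feed in right--invariant vector fields. For $X\in\frak g$, write $X^R$ for the right--invariant field $X^R(g)=T_eR_g(X)$; since $X\mapsto X^R(g)$ is a linear isomorphism $\frak g\to T_gG$, at each $g$ the values $X^R(g)$ exhaust $T_gG$, so some $X^R$ is a lift of any prescribed $\eta$ at that point. Two computations are needed. On one hand, the defining property of the Maurer--Cartan form gives $\om(X^R)(g)=\Ad(g^{-1})(X)$. On the other hand, differentiating $f_v(g)=\rho(g)^{-1}v$ along $X^R$ yields $(X^R\cdot f_v)(g)=-\rho(g^{-1})\rho'(X)v$.

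Combining these with the intertwining identity $\rho'(\Ad(g^{-1})X)=\rho(g^{-1})\rho'(X)\rho(g)$ gives $\rho'(\om(X^R))f_v(g)=\rho(g^{-1})\rho'(X)v$, which cancels exactly against $(X^R\cdot f_v)(g)$. Hence the function representing $\nabla^{\Cal V}_\eta\si_v$ vanishes at every point, so $\nabla^{\Cal V}\si_v=0$; applying this to a basis $\{v_i\}$ of $\Bbb V$ produces the claimed global parallel frame $\{\si_{v_i}\}$. I expect the only delicate point to be the bookkeeping: one must invoke the lift--independence from the first paragraph to justify evaluating the connection formula along the non--$P$--invariant fields $X^R$, and one must fix the sign conventions (the equivariance convention, the left Maurer--Cartan form, and the action convention in the connection formula) consistently, since it is precisely their interplay that produces the cancellation.
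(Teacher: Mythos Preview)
Your proposal is correct and follows essentially the same route as the paper: both compute the tractor connection via the formula $\tilde\eta\cdot f+\rho'(\om(\tilde\eta))f$ using right--invariant vector fields, obtaining $R_X\cdot f_v=f_{-X\cdot v}$ and $\om(R_X)(g)=\Ad(g^{-1})X$, and then the intertwining identity yields the cancellation. The one unnecessary detour in your write--up is the lift--independence argument: right--invariant vector fields on $G$ are automatically $P$--invariant (since $P$ acts by right translations), so each $X^R$ is already a legitimate $P$--invariant lift of a well--defined vector field on $G/P$, and the paper simply shows $\si_v$ is parallel along the projections of such fields, which span each tangent space.
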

\begin{proof}
  It is clear that each $f_v$ is equivariant and that the values of
  the $\si_v$ in each point fill the whole fiber, so we only have to
  show that each $s_v$ is parallel. Applying the construction to the
  adjoint tractor bundle $\Cal AM^\#$, we associate to $X\in\frak g$
  the global section $s_X$ corresponding to the function $g\mapsto
  \Ad(g^{-1})\cdot X$. By equivariancy of the Maurer--Cartan form,
  this corresponds to the right--invariant vector field $R_X$
  generated by $X$. 

  Computing $(R_X\cdot f_v)(g)$ as the derivative at $t=0$ of
  $f_v(\exp(tX)g)$ immediately shows that $R_X\cdot f_v=f_{-X\cdot
    v}$, where in the right hand side we use the infinitesimal action
  of $\frak g$ on $\Bbb V$. The description of the tractor connection
  in terms of the fundamental derivative used in the proof of Lemma
  \ref{lem4.3} then readily implies that $\si_v$ is parallel along the
  projection of $R_X$. Since any tangent vector on $M^\#$ can be
  realized as such a projection, we get $\nabla^{\Cal V}s_v=0$ and the
  result follows.
\end{proof}

Now of course we also get a global parallel trivialization of $\Cal
VM^\#$ in the case that $M^\#$ is a connected open subset in $G/P$. In
this case $\Cal G^\#\subset G$ is the (open) pre--image of
$M^\#\subset G/P$ in $G$. Now assume that $q:M^\#\to M$ is a
PCS--quotient, and let $\tilde\xi\in\frak X(M^\#)$ be the
corresponding infinitesimal automorphism. Since any infinitesimal
automorphism corresponds to a parallel section of $\Cal AM$, we see
that $\tilde\xi$ must be the restriction to $\Cal G^\#$ of a right
invariant vector field $R_X$ on $G$. From the proof of Lemma \ref{lem4.5}
above, we see that the corresponding bundle map $\Xi$ on $\Cal VM^\#$
satisfies $\Xi\o \si_v=\si_{-X\cdot v}$. Now let $\Bbb W_1\subset\Bbb
V$ be the kernel and $\Bbb W_2$ the cokernel of the map $\Bbb V\to\Bbb
V$ defined by $v\mapsto X\cdot v$. Then of course mapping $(x,w)$ to
$s_w(x)$ defines a trivialization $M^\#\x\Bbb W_1\cong\ker(\Xi)$ and
similarly we get a trivialization of $\Coker(\Xi)$.

\begin{thm}\label{thm4.5}
  Suppose that $M^\#$ is a connected open subset of the homogeneous
  model $G/P$ of some parabolic contact structure and that $q:M^\#\to
  M$ is a PCS--quotient for which the infinitesimal automorphism
  defining the quotient corresponds to $X\in\frak g$. Let $\Bbb V$ be
  a representation of $\frak g$, let $\rho_X:\Bbb V\to\Bbb V$ be the
  action of $X$ and put $\Bbb W_1:=\ker(\rho_X)$ and $\Bbb W_2:=\Bbb
  V/\im(\rho_X)$.

  (1) For the complex $(K^*,d^\nabla)$ from Proposition \ref{prop4.3}, the
  cohomology is given by $H^k(K^*,d^\nabla)\cong H^k(M)\otimes\Bbb
  W_1$, where $H^k(M)$ is the $k$--th de--Rham cohomology of $M$.

  (2) Suppose further that $\ker(\rho_X)\cap\im(\rho_X)=\{0\}$. Then
  there is a natural isomorphism $\Bbb W_1\cong\Bbb W_2$ and also for
  the complex $(C^*,\hat d)$ from Proposition \ref{prop4.3}, the
  cohomology is given by $H^k(C^*,\hat d)\cong H^k(M)\otimes\Bbb
  W_1$. Moreover, under this identification and the one from part (1),
  the homomorphism $\delta$ in the long exact sequence from Theorem
  \ref{thm4.4} corresponds to map $H^{i-1}(M)\otimes\Bbb W_1\to
  H^{i+1}(M)\otimes\Bbb W_1$ is given by taking the wedge product with
  the cohomology class $[\om]\in H^2(M)$, where $\om\in\Om^2(M)$ is
  characterized by $q^*\om=d\al$.
\end{thm}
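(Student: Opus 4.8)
The plan is to exploit the global parallel frame from Lemma \ref{lem4.5} to trivialize everything and thereby reduce both complexes to ordinary de--Rham complexes on $M$, tensored with a fixed vector space. Fixing a basis of $\Bbb V$, let $\{\si_{e_a}\}$ be the associated parallel frame, giving an isomorphism $\Cal VM^\#\cong M^\#\x\Bbb V$. Under this trivialization I would first record how the three relevant operators act. Since the frame is parallel, $d^\nabla$ becomes the ordinary exterior derivative $d$ on the $\Bbb V$--valued components; since $\Xi\o\si_v=\si_{-X\cdot v}$ (from the proof of Lemma \ref{lem4.5}), $\Xi_*$ becomes the pointwise constant endomorphism $-\rho_X$; and applying the last part of Lemma \ref{lem4.3} to the $0$--form $\si_v$ (for which $i_\xi\si_v=0$ and $d^\nabla\si_v=0$) yields $\Cal L_{\tilde\xi}\si_v=\si_{\rho_X(v)}$. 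Using that $\Cal L_{\tilde\xi}$ is a derivation coinciding with $\Cal L_\xi$ on real forms (Lemma \ref{lem4.2}), the invariance operator becomes $\Cal L_\xi+\rho_X$ on $\Bbb V$--valued forms, so a horizontal form lies in $A^k_\hor$ iff its component $\ph$ satisfies $i_\xi\ph=0$ and $\Cal L_\xi\ph=-\rho_X\ph$. I would also invoke the standard fact that, as $q\colon M^\#\to M$ is a surjective submersion with connected fibres tangent to $\xi$, the pullback $q^*$ identifies $\Om^*(M)$ with the basic forms on $M^\#$ (those annihilated by both $i_\xi$ and $\Cal L_\xi$), compatibly with $d$.

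For part (1), on the subbundle $\ker(\Xi)$, corresponding to $\Bbb W_1=\ker(\rho_X)$, the endomorphism $\rho_X$ vanishes, so the twisted invariance condition collapses to $\Cal L_\xi\ph=0$. Hence $K^k$ is exactly the space of $\Bbb W_1$--valued basic $k$--forms, i.e. $q^*(\Om^k(M))\otimes\Bbb W_1$, and $d^\nabla$ restricts to $d\otimes\id$. Therefore $(K^*,d^\nabla)\cong(\Om^*(M)\otimes\Bbb W_1,d\otimes\id)$, giving $H^k(K^*,d^\nabla)\cong H^k(M)\otimes\Bbb W_1$.

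For part (2), the hypothesis $\ker(\rho_X)\cap\im(\rho_X)=\{0\}$ forces, by a dimension count, the splitting $\Bbb V=\Bbb W_1\oplus\im(\rho_X)$; in particular the composite $\Bbb W_1\hookrightarrow\Bbb V\to\Bbb V/\im(\rho_X)=\Bbb W_2$ is the asserted natural isomorphism, and $\rho_X$ restricts to an invertible endomorphism of $\im(\rho_X)$. Both summands are $\rho_X$--invariant, so $A^k_\hor$ splits accordingly; on the $\Bbb W_1$--part the invariance condition is again $\Cal L_\xi\ph=0$ (basic forms), while on the $\im(\rho_X)$--part $\Xi_*=-\rho_X$ acts invertibly and preserves the invariance condition. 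Consequently $\Xi_*(A^k_\hor)$ is precisely the $\im(\rho_X)$--part, so $C^k=A^k_\hor/\Xi_*(A^k_\hor)$ is identified with the $\Bbb W_1$--part, i.e. once more $q^*(\Om^k(M))\otimes\Bbb W_1$. Since on $\Bbb W_1$--valued basic forms the horizontal projection is vacuous ($d$ of a basic form is basic), $\hat d=d^\nabla=d$ there, so the induced differential on $C^*$ is $d\otimes\id$ and $H^k(C^*,\hat d)\cong H^k(M)\otimes\Bbb W_1$.

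Finally, to identify $\delta$ I would evaluate it on convenient representatives. By the above, a class in $H^{i-1}(C^*,\hat d)$ is represented by a $\Bbb W_1$--valued basic form $\ph=(q^*\otimes\id)\bar\psi$ with $\bar\psi\in\Om^{i-1}(M)\otimes\Bbb W_1$ closed; then $\hat d\ph=0$, so in the definition of $\delta$ one may take $\ps=0$, giving $\delta([\ph])=[d\al\wedge\ph]\in H^{i+1}(K^*,d^\nabla)$. Using $q^*\om=d\al$ and naturality of the wedge product, $d\al\wedge\ph=(q^*\otimes\id)(\om\wedge\bar\psi)$, which under the identification of part (1) represents $[\om]\wedge[\bar\psi]$. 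Hence $\delta$ is cup product with $[\om]\in H^2(M)$, as claimed. The main obstacle I anticipate is the bookkeeping around the twisted invariance operator $\Cal L_\xi+\rho_X$ together with verifying that $\Xi_*$ restricts to an isomorphism on the image summand of $A^*_\hor$; it is exactly here that the hypothesis $\ker(\rho_X)\cap\im(\rho_X)=\{0\}$ enters, ensuring that $C^*$ collapses cleanly onto the $\Bbb W_1$--valued basic forms rather than retaining a contribution from a generalized kernel.
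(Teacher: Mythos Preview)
Your proposal is correct and follows essentially the same route as the paper: trivialize via the parallel frame of Lemma \ref{lem4.5}, identify $d^\nabla$, $\Xi_*$, $i_\xi$ and $\Cal L_{\tilde\xi}$ with $d$, $\pm\rho_X$, $i_\xi$ and $\Cal L_\xi\mp\rho_X$ on $\Om^*(M^\#)\otimes\Bbb V$, and then use the splitting $\Bbb V=\Bbb W_1\oplus\im(\rho_X)$ (available under the hypothesis of part (2)) to identify both $K^*$ and $C^*$ with $\Om^*(M)\otimes\Bbb W_1$; the computation of $\delta$ via representatives with $\ps=0$ is also the paper's argument. The only cosmetic difference is that you extract the formula for $\Cal L_{\tilde\xi}\si_v$ from the Cartan formula of Lemma \ref{lem4.3}, whereas the paper refers back to the naturality discussion in Section \ref{4.2}; the sign in your $\Cal L_\xi+\rho_X$ versus the paper's $\Cal L_\xi-\rho_X$ is immaterial since only $\ker(\rho_X)$ and $\im(\rho_X)$ enter.
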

\begin{proof}
  The global trivialization $\Cal VM^\#\cong M^\#\x\Bbb V$ constructed
  above of course defines an isomorphism 
\begin{equation}\label{iso}
\Om^k(M^\#,\Cal VM^\#)\cong\Om^k(M^\#)\otimes\Bbb V.
\end{equation} 
By definition, the map $\Xi_*$ corresponds to $\id\otimes\rho_X$ under
this isomorphism while $i_\xi$ corresponds to $i_\xi\otimes\id_{\Bbb
  V}$. Moreover, the fact that the trivializing frame consists of
parallel sections implies that $d^\nabla$ corresponds to
$d\otimes\id_{\Bbb V}$ under the isomorphism \eqref{iso}. Finally, the
considerations about naturality of $\Cal L_{\tilde\xi}$ from Section
\ref{4.2} together with the observations on the trivializing sections
above show that $\Cal L_{\tilde\xi}$ corresponds to $\Cal
L_\xi\otimes\id-\id\otimes\rho_X$ under the isomorphism \eqref{iso}.

  Now by definition $K^k\subset \Om^k(M^\#,\Cal VM^\#)$ consists of
  those forms $\ph$ such that $\Cal L_\xi\ph=0$, $i_\xi\ph=0$ and
  $\Xi_*(\ph)=0$. Hence we see restricting the above map, we obtain an
  isomorphism between $K^k$ and the joint kernel of
  $\id\otimes\rho_X$, $i_\xi\otimes\id$ and  $\Cal
  L_\xi\otimes\id$. Of course, this joint kernel is exactly
  $\Om^*(M)\otimes\Bbb W_1$, and $d^\nabla$ corresponds to
  $d\otimes\id$, so (1) follows. 

  In the setting of (2), we first observe that restricting the
  projection $\Bbb V\to\Bbb W_2$ to $\Bbb W_1$, we obtain an injection
  by assumption, so this must be a linear isomorphism for dimensional
  reasons. Now we can compose the isomorphism \eqref{iso} with the
  projection onto $\Om^k(M^\#)\otimes\Bbb W_2$ and restrict the
  resulting map to $A^k_{hor}\subset \Om^k(M^\#,\Cal VM^\#)$. By the
  above observations on compatibility, the values of this map lie in
  the kernels of $\Cal L_\xi\otimes\id$ and $i_\xi\otimes\id$, so we
  actually land in $\Om^k(M)\otimes\Bbb W_2$. 

  Moreover, by assumption, any form in $\Om^k(M^\#,\Cal VM^\#)$ can be
  written as $\ph=\ph_1+\ph_2$, where $\ph_1$ has values in
  $\ker(\Xi)$ while $\ph_2$ has values in $\im(\Xi)$. From above, we
  see that $\Cal L_{\tilde\xi}$ preserves these two subspaces, so we
  see that $\Cal L_{\tilde\xi}\ph=0$ if and only if $\Cal
  L_{\tilde\xi}\ph_i=0$ for $i=1,2$. The same result trivially holds
  for $i_\xi$ so we see that $\ph\in A^k_{hor}$ implies $\ph_i\in
  A^k_{hor}$ for $i=1,2$, so in particular $\ph_1\in K^k$. Again by
  assumption $\Xi$ restricts to an isomorphism on $\Im(\Xi)$, which
  shows that $\ph_2\in\Xi_*(A^k_{hor})$, so the class of $\ph$ in
  $C^k$ coincides with the class of $\ph_1$.

  On the other hand, given $\tau\in\Om^k(M)$ and $w\in\Bbb W_2$, we
  can find an element $\tilde w\in\Bbb W_1$ projecting onto $w$ and
  the consider $q^*\tau\otimes\si_{\tilde w}\in\Om^k(M^\#,\Cal
  VM^\#)$. Since $\Xi_*(\si_{\tilde w})=0$, we see that this lies in
  $A^k_\hor$, so we can look at its class in $C^k$. Together with the
  above, this shows that we get an inverse, so
  $C^k\cong\Om^k(M)\otimes\Bbb W_2$. Of course,
  $d^\nabla(q^*\tau\otimes\si_{\tilde w})=(q^*d\tau)\otimes\si_{\tilde
    w}$, and since the pullback is horizontal, this coincides with
  $\hat d(q^*\tau\otimes\si_{\tilde w})$. Hence under our isomorphism
  $\hat d$ on $C^*$ again corresponds to $d\otimes\id$. Finally, if
  $d\tau=0$, then $\hat d(q^*\tau\otimes\si_{\tilde w})=0$, which
  readily implies the claim about $\delta$.
\end{proof}

\subsection{Examples}\label{4.6}
Let us first observe that the conditions of part (2) of Theorem
\ref{thm4.5} are often satisfied.

\begin{prop}\label{prop4.6}
  Let $\frak g$ be a simple Lie algebra with complexification $\frak
  g_{\Bbb C}$ and suppose that $X\in\frak g\subset\frak g_{\Bbb C}$ is
  semisimple, i.e.~such that $\ad_X$ is diagonalizable on $\frak
  g_{\Bbb C}$. Then the assumption of part (2) of Theorem \ref{thm4.5} is
  satisfied for any finite dimensional representation of $\frak g$.
\end{prop}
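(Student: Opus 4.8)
The plan is to reduce the stated condition to the diagonalizability of the operator $\rho_X$ and then to deduce that diagonalizability from the semisimplicity of $X$ via the representation theory of $\frak g_{\Bbb C}$.

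First I would record the purely linear-algebraic reformulation. For any endomorphism $T$ of a finite-dimensional vector space $V$ one has $\dim\ker(T)+\dim\im(T)=\dim V$, so $\ker(T)\cap\im(T)=\{0\}$ is equivalent to $V=\ker(T)\oplus\im(T)$, and this in turn holds precisely when $0$ is a semisimple eigenvalue of $T$ (equivalently, $\ker(T)=\ker(T^2)$). In particular the condition is automatic whenever $T$ is diagonalizable over $\Bbb C$. Since forming kernels, images and their intersection commutes with complexification, it suffices to prove the statement after tensoring with $\Bbb C$; thus the goal becomes showing that $\rho_X$ acts diagonalizably on $\Bbb V\otimes_{\Bbb R}\Bbb C$.

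Next I would complexify the representation. A finite-dimensional representation of $\frak g$ complexifies to a complex representation, which extends complex-linearly to a representation $\rho$ of the complex semisimple Lie algebra $\frak g_{\Bbb C}$. By hypothesis $\ad_X$ is diagonalizable on $\frak g_{\Bbb C}$, i.e.~$X$ is a semisimple element of $\frak g_{\Bbb C}$ (for $X=0$ there is nothing to prove). The key step is then the standard fact that such an $X$ lies in some Cartan subalgebra $\frak h\subset\frak g_{\Bbb C}$: the line $\Bbb C X$ is a toral subalgebra, hence is contained in a maximal one. In the finite-dimensional representation $\rho$ the Cartan subalgebra $\frak h$ acts by simultaneously diagonalizable operators, yielding the weight decomposition $\Bbb V\otimes\Bbb C=\bigoplus_\la V_\la$ on which $\rho(X)$ acts by the scalar $\la(X)$. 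Hence $\rho_X$ is diagonalizable, with $\ker(\rho_X)=\bigoplus_{\la(X)=0}V_\la$ and $\im(\rho_X)=\bigoplus_{\la(X)\neq 0}V_\la$, so that $\ker(\rho_X)\cap\im(\rho_X)=\{0\}$ as required.

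The only real content is this last step --- that a semisimple element acts diagonalizably in every finite-dimensional representation --- and the Cartan-subalgebra argument above is the most economical way to see it; alternatively one may invoke the compatibility of the abstract Jordan decomposition of $\frak g_{\Bbb C}$ with arbitrary finite-dimensional representations, which gives $\rho(X)=\rho(X)_s$ semisimple directly. Everything else reduces to elementary linear algebra together with the exactness of complexification.
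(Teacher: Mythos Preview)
Your proof is correct and follows essentially the same route as the paper's: reduce to showing that $\rho_X$ is diagonalizable on the complexification, note that for a diagonalizable map kernel and image are complementary, and then invoke that a semisimple element of $\frak g_{\Bbb C}$ acts diagonalizably in every finite-dimensional representation. The paper simply cites this last fact as a ``classical result of Lie theory,'' whereas you supply an argument for it (via inclusion in a Cartan subalgebra, or alternatively via the abstract Jordan decomposition); in that sense your write-up is slightly more self-contained but otherwise identical in strategy.
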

\begin{proof}
  It is a classical result of Lie theory that $X$ acts diagonalizably
  on any complex representation of $\frak g_{\Bbb C}$. But for a
  diagonalizable map, the kernel is the eigenspace for the eigenvalue
  $0$, while the image coincides with the sum of all other
  eigenspaces. Hence $\ker(\rho_X)\cap\im(\rho_X)=\{0\}$ on such
  representations, and via complexifications, this easily extends to
  all real representations of $\frak g$.
\end{proof}

Next, we can sort out the local case. 

\begin{cor}\label{cor4.6}
  Suppose that the assumptions of part (2) of Theorem \ref{thm4.5} are
  satisfied and that $q:M^\#\to M$ has the property that the form
  $\om\in\Om^2(M)$ such that $q^*\om=d\al$ is exact. Then for each
  $k$, the cohomology $\Cal H_k$ in degree $k$ of the descended BGG
  sequence fits into an exact sequence
$$
0\to \Om^k(M)\otimes\Bbb W_1\to \Cal H_k\to
\Om^{k-1}(M)\otimes\Bbb W_1\to 0. 
$$
In particular, the local cohomology of the complex vanishes except in
degrees $0$ and $1$, where it is isomorphic to $\Bbb W_1$.
\end{cor}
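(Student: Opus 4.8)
The plan is to read off the asserted short exact sequence directly from the long exact sequence of Theorem \ref{thm4.4}, once the three cohomology groups appearing there and the connecting homomorphism have been identified via Theorems \ref{thm4.1} and \ref{thm4.5}. First I would observe that a connected open subset of $G/P$ is locally flat, so the tractor connection $\nabla^{\Cal V}$ is flat and $d^\nabla\o d^\nabla=0$; this is precisely the standing hypothesis under which Sections \ref{4.3} and \ref{4.4} operate. Granting it, Theorem \ref{thm4.1} identifies $\Cal H_k$, the degree-$k$ cohomology of the descended BGG complex, with the cohomology $H^k(A^*,d^\nabla)$ of the subcomplex of $\xi$-invariant forms, which is the term sitting in the middle of the long exact sequence.

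The crux is the vanishing of the connecting map $\delta$. By part (2) of Theorem \ref{thm4.5}, the hypothesis $\ker(\rho_X)\cap\im(\rho_X)=\{0\}$ yields natural isomorphisms $H^k(K^*,d^\nabla)\cong H^k(M)\otimes\Bbb W_1$ and $H^k(C^*,\hat d)\cong H^k(M)\otimes\Bbb W_1$, and identifies $\delta$ with the wedge product by the class $[\om]\in H^2(M)$ determined by $q^*\om=d\al$. Since $\om$ is assumed exact we have $[\om]=0$, so every $\delta$ in the long exact sequence is the zero map. Consequently that sequence splits into short exact pieces $0\to H^k(K^*,d^\nabla)\overset{j}{\to} H^k(A^*,d^\nabla)\overset{\pi}{\to} H^{k-1}(C^*,\hat d)\to 0$: the incoming $\delta$ makes $j$ injective, the outgoing $\delta$ makes $\pi$ surjective, and exactness at the middle term is already built into Theorem \ref{thm4.4}. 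Substituting the identifications above together with $H^k(A^*,d^\nabla)\cong\Cal H_k$ produces exactly $0\to H^k(M)\otimes\Bbb W_1\to\Cal H_k\to H^{k-1}(M)\otimes\Bbb W_1\to 0$.

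For the final assertion I would pass to the local situation by replacing $M$ with a contractible open subset. Here the hypothesis on $\om$ is automatic: applying $q^*$ to $d\om$ gives $d\,d\al=0$ and $q^*$ is injective on forms, so $\om$ is closed and hence exact on a contractible base. On such $M$ one has $H^k(M)=0$ for $k\geq 1$ and $H^0(M)\cong\Bbb R$. Feeding this into the short exact sequence forces $\Cal H_k=0$ for $k\geq 2$, while in degrees $0$ and $1$ the sequence collapses to $\Cal H_0\cong H^0(M)\otimes\Bbb W_1\cong\Bbb W_1$ and $\Cal H_1\cong H^0(M)\otimes\Bbb W_1\cong\Bbb W_1$, respectively.

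The only genuinely delicate point is bookkeeping: one must track the degree shifts in $\pi\colon H^k(A^*)\to H^{k-1}(C^*)$ and $\delta\colon H^{k-1}(C^*)\to H^{k+1}(K^*)$ carefully, so that the resulting short exact sequence emerges with $H^k(M)\otimes\Bbb W_1$ on the left and $H^{k-1}(M)\otimes\Bbb W_1$ on the right. Everything else is a formal consequence of the results already in hand, and I expect no substantive obstacle beyond checking that exactness of $\om$ is exactly the condition annihilating the connecting homomorphism.
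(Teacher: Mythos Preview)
Your proposal is correct and follows essentially the same approach as the paper: use Theorem \ref{thm4.5}(2) to identify the outer terms of the long exact sequence of Theorem \ref{thm4.4} and to see that $\delta$ is wedge with $[\om]=0$, so the sequence breaks into short exact pieces; the local statement then follows from the Poincar\'e lemma. Your write-up is simply more explicit than the paper's (you spell out the role of Theorem \ref{thm4.1} and the contractible-neighborhood argument), and you correctly write $H^k(M)\otimes\Bbb W_1$ where the stated sequence has $\Om^k(M)\otimes\Bbb W_1$, which is evidently a typo given the proof and the local-cohomology conclusion.
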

\begin{proof}
  Part (2) of Theorem \ref{thm4.5} gives an interpretation of the
  cohomology groups of $K^*$ and $C^*$ showing up in the long exact
  sequence from Theorem \ref{thm4.4} and shows that the connecting
  homomorphisms $\delta$ in that sequence are all $0$. Hence the
  sequence decomposes into short exact sequences as claimed. The
  result on local cohomology follows readily.
\end{proof}

Finally, we can sort of the case of complex projective space in either
of the two interpretations from \cite{PCS2}. Note that the only
information needed for the applications in \cite{E-G} is vanishing of
the first cohomology for a class of descended BGG sequences. 

\begin{thm}\label{thm4.6}
  For $n\geq 2$ consider the global PCS--quotient
  $q:M^\#:=S^{2n+1}\to\mathbb CP^n=:M$, either for the PCS--structure
  of K\"ahler type on $\mathbb CP^n$ as discussed in Proposition 1
  of \cite{PCS2} or the induced conformal Fedosov structure as in
  Section 3.4 of that reference. Let $\mathbb V$ be a representation
  of the corresponding group $G$, let $X\in\frak g$ be the element
  generating the parallel section of $\Cal AM^\#$ giving rise to the
  PCS--quotient and put $\Bbb W:=\{v\in\Bbb V:X\cdot v=0\}$. Let $\Cal
  VM^\#\to M^\#$ be the tractor bundle induced by $\Bbb V$.

  Then the cohomology of the sequence of differential operators on $M$
  obtained by descending the BGG sequence induced by $\Bbb V$ vanishes
  in degrees different from $0$ and $2n+1$, while in degrees $0$ and
  $2n+1$ it is isomorphic to $\Bbb W$. 
\end{thm}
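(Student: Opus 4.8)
The plan is to reduce the computation to the long exact sequence of Theorem~\ref{thm4.4} and the explicit description of its terms from Theorem~\ref{thm4.5}, and then to feed in the cohomology ring of $\Bbb CP^n$. By Theorem~\ref{thm4.1} the cohomology of the descended BGG sequence is naturally isomorphic to the cohomology $H^*(A^*,d^\nabla)$ of the subcomplex $A^*=\Om^*_\xi(M^\#,\Cal VM^\#)$, so it suffices to compute the groups $H^k(A^*,d^\nabla)$.

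The next step is to verify the hypotheses of part (2) of Theorem~\ref{thm4.5}. In either interpretation, $M^\#=S^{2n+1}$ falls under the setting of Section~\ref{4.5} (a connected open subset of the homogeneous model $G/P$), and the infinitesimal automorphism defining the quotient is generated by the element $X\in\frak g$ corresponding to the periodic circle action of the Hopf fibration. This $X$ is elliptic and hence semisimple, so Proposition~\ref{prop4.6} gives $\ker(\rho_X)\cap\im(\rho_X)=\{0\}$ and Theorem~\ref{thm4.5} applies with $\Bbb W_1=\Bbb W$. This identifies $H^k(K^*,d^\nabla)\cong H^k(M)\otimes\Bbb W$ and $H^k(C^*,\hat d)\cong H^k(M)\otimes\Bbb W$, and realises the connecting map $\delta\colon H^{k-1}(C^*,\hat d)\to H^{k+1}(K^*,d^\nabla)$ as $(\,\cdot\wedge[\om])\otimes\id_{\Bbb W}$, where $\om\in\Om^2(M)$ is characterised by $q^*\om=d\al$.

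The decisive input is the cohomology of $M=\Bbb CP^n$: one has $H^k(\Bbb CP^n)=\Bbb R$ for even $k$ with $0\le k\le 2n$ and $0$ otherwise, while $[\om]$ is nonzero (its top power $[\om]^n$ is a volume class) and hence generates $H^2$. By the ring structure of $\Bbb CP^n$, the Lefschetz map $\,\cdot\wedge[\om]\colon H^{2j}(\Bbb CP^n)\to H^{2j+2}(\Bbb CP^n)$ is an isomorphism for $0\le j\le n-1$. I would then read off the long exact sequence of Theorem~\ref{thm4.4}, which for each $k$ yields a short exact sequence $0\to\coker(\delta_{k-2})\to H^k(A^*)\to\ker(\delta_{k-1})\to 0$ with $\delta_{k-2}\colon H^{k-2}(C^*)\to H^k(K^*)$ and $\delta_{k-1}\colon H^{k-1}(C^*)\to H^{k+1}(K^*)$. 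For even $k$ with $2\le k\le 2n$ the map $\delta_{k-2}$ is the Lefschetz isomorphism while $H^{k-1}(\Bbb CP^n)=0$, so $H^k(A^*)=0$; for odd $k$ with $1\le k\le 2n-1$ both $H^k(K^*)=0$ and $\delta_{k-1}$ is the Lefschetz isomorphism, so again $H^k(A^*)=0$. The two surviving groups sit at the ends: at $k=0$ the source of $\delta_{-2}$ vanishes and $\coker(\delta_{-2})=H^0(\Bbb CP^n)\otimes\Bbb W\cong\Bbb W$, while at $k=2n+1$ the target $H^{2n+2}(\Bbb CP^n)$ vanishes and $\ker(\delta_{2n})=H^{2n}(\Bbb CP^n)\otimes\Bbb W\cong\Bbb W$.

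The routine parts are the index and parity bookkeeping in the long exact sequence. The genuine crux is the combination of the identification of $\delta$ with the Lefschetz operator $\,\cdot\wedge[\om]$ and the fact that this operator is an isomorphism throughout the middle range: it is precisely this hard--Lefschetz feature of $\Bbb CP^n$ that collapses the cohomology onto the two extreme degrees. Subsidiary care is required to confirm that the Hopf generator $X$ is indeed semisimple, so that Proposition~\ref{prop4.6} applies and $\Bbb W_1$ really is the kernel $\Bbb W$ of the statement, and that $d\al$ descends to a form $\om$ whose class is a nonzero multiple of the Fubini--Study class, which is what makes $[\om]$ a generator of $H^2(\Bbb CP^n)$.
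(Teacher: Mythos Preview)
Your proposal is correct and follows essentially the same route as the paper's proof: reduce via Theorem~\ref{thm4.1} to $H^*(A^*,d^\nabla)$, invoke Proposition~\ref{prop4.6} for the semisimplicity of $X$, apply Theorem~\ref{thm4.5} to identify $H^*(K^*)$, $H^*(C^*)$ and $\delta$ with de--Rham data on $\Bbb CP^n$, and then read off the long exact sequence of Theorem~\ref{thm4.4}. Your treatment is in fact more explicit than the paper's, which simply notes that the $\delta$'s are isomorphisms in the middle range and extracts the two end pieces directly; your derivation of the short exact sequences $0\to\coker(\delta_{k-2})\to H^k(A^*)\to\ker(\delta_{k-1})\to 0$ and the case split by parity spell out the same content in more detail.
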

\begin{proof}
  The Lie algebra $\frak g$ of $G$ either equals
  $\mathfrak{su}(n+1,1)$ or $\mathfrak{sp}(2n+2,\mathbb R)$. In the
  first case, $\frak g$ naturally acts on $\Bbb C^{n+2}$ and in the
  second case we consider it as acting on $\Bbb C^{n+1}\cong\Bbb
  R^{2n+2}$. In both cases, the discussion in \cite{PCS2} shows that
  $X$ acts diagonalizably (over $\mathbb C$) in this
  representation. Thus we can apply Proposition \ref{prop4.6} to see that
  the assumptions of part (2) of Theorem \ref{thm4.5} are satisfied for
  $\Bbb V$. Together with the well known description of $H^*(\Bbb
  CP^n)$, Theorem \ref{thm4.5} shows that both $(K^*,d^\nabla)$ and
  $(C^*,\hat d)$ have vanishing cohomology in odd degrees. Moreover,
  all the connecting homomorphisms $\delta:H^{k-1}(K^*,d^\nabla)\to
  H^{k+1}(C^*,\hat d)$ in the long exact sequence from Theorem
  \ref{thm4.4} are isomorphisms whenever $1\leq k\leq 2n-1$. Using this,
  the long exact sequence readily implies vanishing of the cohomology of
  $(A^*,d^\nabla)$ in degrees different from $0$ and $2n+1$. For these
  two degrees the long exact sequence contains the parts $0\to
  H^0(K^*,d^\nabla)\to H^0(A^*,d^\nabla)\to 0$ and $0\to
  H^{2n+1}(A^*,d^\nabla)\to H^{2n}(C^*,\hat d)\to 0$ which together
  with Theorem \ref{thm4.5} completes the proof.
\end{proof}

\begin{bibdiv}
\begin{biblist}

\bib{BCEG}{article}{
   author={Branson, Thomas},
   author={{\v{C}}ap, Andreas},
   author={Eastwood, Michael},
   author={Gover, A. Rod},
   title={Prolongations of geometric overdetermined systems},
   journal={Internat. J. Math.},
   volume={17},
   date={2006},
   number={6},
   pages={641--664},
   issn={0129-167X},
   review={\MR{2246885 (2007h:58061)}},
   doi={10.1142/S0129167X06003655},
}

\bib{BCG3}{book}{
   author={Bryant, R. L.},
   author={Chern, S. S.},
   author={Gardner, R. B.},
   author={Goldschmidt, H. L.},
   author={Griffiths, P. A.},
   title={Exterior differential systems},
   series={Mathematical Sciences Research Institute Publications},
   volume={18},
   publisher={Springer-Verlag},
   place={New York},
   date={1991},
   pages={viii+475},
   isbn={0-387-97411-3},
   review={\MR{1083148 (92h:58007)}},
}

\bib{Cahen-Schwachhoefer}{article}{
   author={Cahen, Michel},
   author={Schwachh{\"o}fer, Lorenz J.},
   title={Special symplectic connections},
   journal={J. Differential Geom.},
   volume={83},
   date={2009},
   number={2},
   pages={229--271},
   issn={0022-040X},
   review={\MR{2577468 (2011b:53045)}},
}

\bib{Calderbank-Diemer}{article}{ author={Calderbank, David M. J.},
  author={Diemer, Tammo}, title={Differential invariants and curved
    Bernstein-Gelfand-Gelfand sequences}, journal={J. Reine
    Angew. Math.}, volume={537}, date={2001}, pages={67--103},
  issn={0075-4102}, review={\MR{1856258 (2002k:58048)}}, }

\bib{deformations}{article}{
   author={{\v{C}}ap, Andreas},
   title={Infinitesimal automorphisms and deformations of parabolic
   geometries},
   journal={J. Eur. Math. Soc. (JEMS)},
   volume={10},
   date={2008},
   number={2},
   pages={415--437},
   issn={1435-9855},
   review={\MR{2390330}},
   doi={10.4171/JEMS/116},
}

\bib{hol-red}{article}{
   author={{\v{C}}ap, A.},
   author={Gover, A. R.},
   author={Hammerl, M.},
   title={Holonomy reductions of Cartan geometries and curved orbit
   decompositions},
   journal={Duke Math. J.},
   volume={163},
   date={2014},
   number={5},
   pages={1035--1070},
   issn={0012-7094},
   review={\MR{3189437}},
   doi={10.1215/00127094-2644793},
}

\bib{Cap-Salac}{article}{
   author={{\v{C}}ap, Andreas},
   author={Sala{\v{c}}, Tom{\'a}{\v{s}}},
   title={Pushing down the Rumin complex to conformally symplectic
   quotients},
   journal={Differential Geom. Appl.},
   volume={35},
   date={2014},
   number={suppl.},
   pages={255--265},
   issn={0926-2245},
   review={\MR{3254307}},
   doi={10.1016/j.difgeo.2014.05.004},
}

\bib{PCS1}{article}{
   author={{\v{C}}ap, Andreas},
   author={Sala\v c, Tom\'a\v s},
   title={Parabolic conformally symplectic structures I; definition
     and distinguished connections},
   journal={Forum Math.},
   status={to appear},
   eprint={arXiv:1605.01161}, 
   doi={10.1515/forum-2017-0018},
}

\bib{PCS2}{article}{
   author={{\v{C}}ap, Andreas},
   author={Sala\v c, Tom\'a\v s},
   title={Parabolic conformally symplectic structures II; parabolic contactification},
   journal={Ann. Mat. Pura Appl.},
   status={to appear},
   eprint={arXiv:1605.01897}, 
   doi={10.1007/s10231-017-0719-3},
}

\bib{book}{book}{
   author={{\v{C}}ap, Andreas},
   author={Slov{\'a}k, Jan},
   title={Parabolic geometries. I},
   series={Mathematical Surveys and Monographs},
   volume={154},
   note={Background and general theory},
   publisher={American Mathematical Society},
   place={Providence, RI},
   date={2009},
   pages={x+628},
   isbn={978-0-8218-2681-2},
   review={\MR{2532439 (2010j:53037)}},
}

\bib{CSS-BGG}{article}{
   author={{\v{C}}ap, Andreas},
   author={Slov{\'a}k, Jan},
   author={Sou{\v{c}}ek, Vladim{\'{\i}}r},
   title={Bernstein-Gelfand-Gelfand sequences},
   journal={Ann. of Math.},
   volume={154},
   date={2001},
   number={1},
   pages={97--113},
   issn={0003-486X},
   review={\MR{1847589 (2002h:58034)}},
}

\bib{subcomplexes}{article}{
   author={{\v{C}}ap, Andreas},
   author={Sou{\v{c}}ek, Vladim{\'{\i}}r},
   title={Subcomplexes in curved BGG-sequences},
   journal={Math. Ann.},
   volume={354},
   date={2012},
   number={1},
   pages={111--136},
   issn={0025-5831},
   review={\MR{2957620}},
   doi={10.1007/s00208-011-0726-4},
}

\bib{Rel-BGG1}{article}{
   author={{\v{C}}ap, Andreas},
   author={Sou{\v{c}}ek, Vladimir},
   title={Relative BGG sequences: I. Algebra},
   journal={J. Algebra},
   volume={463},
   date={2016},
   pages={188--210},
   issn={0021-8693},
   review={\MR{3527545}},
   doi={10.1016/j.jalgebra.2016.06.007},
}

\bib{Rel-BGG2}{article}{
   author={\v Cap, Andreas},
   author={Sou\v cek, Vladim\'\i r},
   title={Relative BGG sequences; II. BGG machinery and invariant operators},
   journal={Adv. Math.},
   volume={320},
   date={2017},
   pages={1009--1062},
   issn={0001-8708},
   review={\MR{3709128}},
}

\bib{E-G}{article}{
   author={Eastwood, Michael},
   author={Goldschmidt, Hubert},
   title={Zero-energy fields on complex projective space},
   journal={J. Differential Geom.},
   volume={94},
   date={2013},
   number={1},
   pages={129--157},
   issn={0022-040X},
   review={\MR{3031862}},
}

\bib{E-S}{article}{
   author={Eastwood, Michael G.},
   author={Slov{\'a}k, Jan},
   title={Conformally Fedosov manifolds},
   eprint={arXiv:1210.5597},
}

\bib{Fox}{article}{
   author={Fox, Daniel J. F.},
   title={Contact projective structures},
   journal={Indiana Univ. Math. J.},
   volume={54},
   date={2005},
   number={6},
   pages={1547--1598},
   issn={0022-2518},
   review={\MR{2189678 (2007b:53163)}},
   doi={10.1512/iumj.2005.54.2603},
}

\bib{Kostant}{article}{
   author={Kostant, Bertram},
   title={Lie algebra cohomology and the generalized Borel-Weil theorem},
   journal={Ann. of Math. (2)},
   volume={74},
   date={1961},
   pages={329--387},
   issn={0003-486X},
   review={\MR{0142696 (26 \#265)}},
}

\bib{Lepowsky}{article}{
   author={Lepowsky, J.},
   title={A generalization of the Bernstein-Gelfand-Gelfand resolution},
   journal={J. Algebra},
   volume={49},
   date={1977},
   number={2},
   pages={496--511},
   issn={0021-8693},
   review={\MR{0476813 (57 \#16367)}},
}

\bib{Sternberg}{book}{
   author={Sternberg, Shlomo},
   title={Lectures on differential geometry},
   publisher={Prentice-Hall, Inc., Englewood Cliffs, N.J.},
   date={1964},
   pages={xv+390},
   review={\MR{0193578 (33 \#1797)}},
}

\end{biblist}
\end{bibdiv}

\end{document}